\newcommand{\N}{\mathbb N}
\newcommand{\Z}{\mathbb Z}
\newcommand{\F}{\mathbb F}
\newcommand{\D}{\mathbb D}
\newcommand{\E}{\mathbb E}
\newcommand{\bS}{\mathbb S}
\newcommand{\ie}{i.e.\ }
\newcommand{\eg}{e.g.\ }
\newcommand{\inv}{^{-1}}
\newcommand{\ur}{\text{\smash{\raisebox{-1ex}{\scalebox{1.5}{$\urcorner$}}}}}
\renewcommand{\ll}{\text{\smash{{\scalebox{1.5}{$\llcorner$}}}}}
\DeclareMathOperator{\dia}{dia}
\DeclareMathOperator{\Sp}{Sp}
\DeclareMathOperator{\St}{St}
\DeclareMathOperator{\loc}{loc}
\DeclareMathOperator{\stab}{stab}
\newcommand{\op}{^\text{op}}
\DeclareMathOperator{\Cyl}{Cyl}
\newcommand{\swtrans}{\mathbin{\rotatebox[origin=c]{225}{$\Rightarrow$}}}
\newcommand{\netrans}{\mathbin{\rotatebox[origin=c]{45}{$\Rightarrow$}}}
\newcommand{\cC}{\mathcal C}
\newcommand{\cE}{\mathcal E}
\newcommand{\cD}{\mathcal D}
\newcommand{\cS}{\mathcal S}
\newcommand{\Cat}{\mathbf{Cat}}
\newcommand{\CAT}{\mathbf{CAT}}
\DeclareMathOperator{\Hom}{Hom}
\DeclareMathOperator{\id}{id}
\newcommand{\Dia}{\mathbf{Dia}}
\DeclareMathOperator{\Ho}{Ho}
\renewcommand{\phi}{\varphi}
\numberwithin{equation}{section}
\numberwithin{figure}{section}
\theoremstyle{definition}
\newtheorem{theorem}[equation]{Theorem}
\newtheorem{prop}[equation]{Proposition}
\newtheorem{defn}[equation]{Definition}
\newtheorem{lemma}[equation]{Lemma}
\newtheorem{remark}[equation]{Remark}
\newtheorem{cor}[equation]{Corollary}
\newtheorem{notn}[equation]{Notation}
\newtheorem{cons}[equation]{Construction}
\newenvironment{thm2}[1]{\innercustomthm}{\endinnercustomthm}
\begin{document}

\title{Stabilization of derivators revisited}
\author{Ian Coley}
\address{Department of Mathematics, University of California, Los Angeles}
\urladdr{\href{http://iancoley.org}{iancoley.org}}
\email{\href{mailto:iacoley@math.ucla.edu}{iacoley@math.ucla.edu}}
\begin{abstract}
We revisit and improve Alex Heller's results on the stabilization of derivators in \cite{Hel97}, recovering his results entirely. Along the way we give some details of the localization theory of derivators and prove some new results in that vein.
\end{abstract}

\maketitle
\tableofcontents

\section{Introduction}

The theory of derivators was first developed by Heller in \cite{Hel88} and Grothendieck in \cite{Gro90}. Heller used the now-defunct terminology of `hypercategories' and `homotopy theories' to describe the objects of his study. Nonetheless, his and Grothendieck's axioms are essentially the same, and Heller's results in his monograph \emph{Homotopy Theories} \cite{Hel88} and his later paper \emph{Stable homotopy theories and stabilization} \cite{Hel97} are often cited in the modern study of derivators. For instance, Tabuada in \cite{Tab08} uses Heller's stabilization to construct the universal localizing and additive invariants of dg-categories, which are connected to K-theory and non-commutative motives.

The paper came about while trying to answer the following open problem posed by Muro and Raptis in \cite[\S6.3]{MurRap17}:
\begin{quote}
We do not know whether derivator K-theory [...] is invariant under an appropriately defined notion of stabilization which would produce a triangulated derivator.
\end{quote}
Understanding the mechanics of the stabilization that Heller developed in \cite{Hel97} is a necessary first step. A careful reading of that paper will find that, though his main propositions and theorems feel true, Heller rarely gives more than a cursory proof. In the present paper, we provide full proofs and correct certain errors along the way. Further, we remove the hypothesis that the derivators to be stabilized are strong (Definition~\ref{defn:strong}), and so obtain results in greater generality.

The goals and results of this paper are essentially the same as those of \cite{Hel97}. Our first main result deals with localization of pointed derivators, a key ingredient in the proof of stabilization. Heller gives a bare sketch of the proof at \cite[Proposition~7.4]{Hel97} which we complete into a full argument.
\begin{thm2}{1}[Theorem~\ref{thm:vanishingloc}]
Let $\D$ be a pointed derivator and $i\colon A\to B$ be a full subcategory. Then the inclusion of the vanishing subprederivator $\D^{B,A}\to \D^B$ admits a left adjoint. In particular, $\D^{B,A}$ is again a pointed derivator.
\end{thm2}

In order to define the stabilization of a derivator, we will use infinite diagram categories and various maps between them. Heller posits the existence of a functor that does not exist, however, and only assumes the existence of others. In order to complete his argument, we have with total specificity constructed new functors between infinite diagrams, for example at Remark~\ref{rk:wrongw} and Construction~\ref{cons:coherentdia}. These proofs offer a way to move from a heuristic to a rigorous construction.

The final objective is a 2-functorial statement of stabilization.
\begin{thm2}{2}[Theorem~\ref{thm:main2}]
There is a pseudofunctor $\St\colon\textbf{Der}_!\to\textbf{StDer}_!$ associating to any regular pointed derivator $\D$ a stable derivator $\St\D$. This\linebreak pseudofunctor is a localization of 2-categories, \ie it admits a fully faithful right adjoint. In particular, for any stable derivator $\bS$, there is an equivalence of categories of cocontinuous morphisms
\begin{equation*}
\stab^\ast\colon\Hom_!(\St\D,\bS)\to\Hom_!(\D,\bS)
\end{equation*}
induced by precomposition with a cocontinuous morphism $\stab\colon\D\to\St\D$.
\end{thm2}
A key step in this theorem is proving the pseudofunctoriality of $\St$ on morphisms of derivators. Lemma~\ref{lemma:stableequivalence} is our new solution to a major gap in Heller's proof, which is discussed above that lemma. 

There are other theories of stabilization that may be preferred to using derivators. Lurie in \cite[\S1.4]{Lur16} describes stabilization for a huge class of $\infty$-categories, with stronger results for $\infty$-categories admitting nicer properties. His result that most resembles ours is the following:\pagebreak
\begin{theorem}\cite[Corollary~1.4.4.5]{Lur16}
Let $\cC$ and $\cD$ be presentable $\infty$-categories, and suppose that $\cD$ is stable. Then precomposition with $\Sigma_+^\infty\colon\cC\to\Sp(\cC)$ induces an equivalence of $\infty$-categories of left adjoint functors
\begin{equation*}
\operatorname{Fun}^\text{L}(\Sp(\cC),\cD)\to\operatorname{Fun}^\text{L}(\cC,\cD).
\end{equation*}
Here, $\Sigma_+^\infty$ is Lurie's stabilization functor.
\end{theorem}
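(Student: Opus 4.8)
The plan is to deduce the statement from the universal property of the stabilization $\Sp(\cC)$ inside the $\infty$-category $\Pr^{\mathrm L}$ of presentable $\infty$-categories and colimit-preserving functors. The starting point is the description $\Sp(\cC)\simeq\lim\bigl(\cdots\xrightarrow{\Omega}\cC_{*/}\xrightarrow{\Omega}\cC_{*/}\bigr)$ of the stabilization as an inverse limit of copies of the pointed objects $\cC_{*/}$ along the loop functor. Since every transition map $\Omega$ is a right adjoint and the duality $\Pr^{\mathrm L}\simeq(\Pr^{\mathrm R})\op$ carries a limit of right adjoints to the colimit in $\Pr^{\mathrm L}$ of their left adjoints, one obtains $\Sp(\cC)\simeq\colim\bigl(\cC_{*/}\xrightarrow{\Sigma}\cC_{*/}\xrightarrow{\Sigma}\cdots\bigr)$. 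First I would record this identification, together with the fact that $\Sigma^\infty_+\colon\cC\to\Sp(\cC)$ factors as the free-pointed-object functor $(-)_+\colon\cC\to\cC_{*/}$ followed by the colimit insertion $\Sigma^\infty\colon\cC_{*/}\to\Sp(\cC)$ of the initial stage.

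Next I would apply $\Fun^{\mathrm L}(-,\cD)$, which turns colimits in $\Pr^{\mathrm L}$ into limits of $\infty$-categories, to get
\[
\Fun^{\mathrm L}(\Sp(\cC),\cD)\;\simeq\;\lim\bigl(\cdots\to\Fun^{\mathrm L}(\cC_{*/},\cD)\xrightarrow{\,-\circ\Sigma\,}\Fun^{\mathrm L}(\cC_{*/},\cD)\bigr),
\]
with the projection to the initial stage given by precomposition with $\Sigma^\infty$. Using the standard identification $\Fun^{\mathrm L}(\cC_{*/},\cD)\simeq\Fun^{\mathrm L}(\cC,\cD_{*/})$ by restriction along $(-)_+$, together with $\cD_{*/}\simeq\cD$ since $\cD$ is pointed (being stable), each term of the tower becomes $\Fun^{\mathrm L}(\cC,\cD)$. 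Chasing the transition map through these identifications shows that precomposition with $\Sigma\colon\cC_{*/}\to\cC_{*/}$ corresponds to postcomposition with the suspension endofunctor $\Sigma_\cD$ of $\cD$; because $\cD$ is stable, $\Sigma_\cD$ is an equivalence, so the whole tower consists of equivalences, its limit is $\Fun^{\mathrm L}(\cC,\cD)$, and every structure projection is an equivalence. It then remains to recognize the resulting composite equivalence $\Fun^{\mathrm L}(\Sp(\cC),\cD)\to\Fun^{\mathrm L}(\cC,\cD)$ as precomposition with $\Sigma^\infty\circ(-)_+=\Sigma^\infty_+$, which follows from the factorization recorded in the first step.

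The hard part will be precisely this last bookkeeping: verifying that the equivalence produced abstractly from these universal properties is genuinely implemented by the explicit functor $\Sigma^\infty_+$, which means tracking the compatibility of the $\Pr^{\mathrm L}/\Pr^{\mathrm R}$ duality and of the identification $\Fun^{\mathrm L}(\cC_{*/},\cD)\simeq\Fun^{\mathrm L}(\cC,\cD)$ with the colimit-insertion maps into $\colim\bigl(\cC_{*/}\xrightarrow{\Sigma}\cdots\bigr)$. A slicker but more infrastructure-heavy alternative would be to observe that $\Sp\simeq\Sp(\cS)$ is an idempotent object of the symmetric monoidal $\infty$-category $(\Pr^{\mathrm L},\otimes)$, so that $\Sp(\cC)\simeq\cC\otimes\Sp$ and $-\otimes\Sp$ is a smashing localization of $\Pr^{\mathrm L}$ whose local objects are exactly the stable presentable $\infty$-categories; granting this, the equivalence $\Fun^{\mathrm L}(\cC\otimes\Sp,\cD)\simeq\Fun^{\mathrm L}(\cC,\cD)$ for stable $\cD$ together with the identification of the localization unit with $\Sigma^\infty_+$ is formal, the price being the development of the idempotent-algebra formalism. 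For that reason I would present the tower argument as the primary route.
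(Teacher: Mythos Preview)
The paper does not prove this theorem at all: it is quoted in the introduction purely as context, attributed to Lurie \cite[Corollary~1.4.4.5]{Lur16}, to illustrate the analogous $\infty$-categorical result that the paper's derivator-theoretic Main Theorem~2 parallels. There is therefore no ``paper's own proof'' to compare your proposal against.

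That said, your sketch is a faithful outline of the standard argument (and essentially of Lurie's own): the identification of $\Sp(\cC)$ as both a limit along $\Omega$ in $\Pr^{\mathrm R}$ and a colimit along $\Sigma$ in $\Pr^{\mathrm L}$, followed by applying $\Fun^{\mathrm L}(-,\cD)$ and using stability of $\cD$ to see the tower is constant. The bookkeeping concern you flag --- identifying the abstract equivalence with precomposition by $\Sigma^\infty_+$ --- is real but routine once the colimit description is in place, since $\Sigma^\infty_+$ is by construction the composite of $(-)_+$ with the insertion at stage $0$. Your alternative via the idempotent algebra $\Sp$ in $(\Pr^{\mathrm L},\otimes)$ is also correct and is the perspective developed later in \cite{Lur16}. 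Either route is fine; neither has anything to do with the derivator methods of the present paper.
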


Hovey in \cite{Hov01} develops the theory of stabilizing a model category $\cC$ at any left Quillen endofunctor $G$. He too establishes a universal property, so long as $\cC$ is a left proper cellular model category. To give the necessary notation to state the property, we define a map of pairs $(\cC,G)\to(\cD, H)$ to be a left Quillen functor $\Phi\colon\cC\to\cD$ with a natural transformation $\tau\colon \Phi G\to H\Phi$ such that $\tau_A$ is a weak equivalence for cofibrant objects $A$.
\begin{theorem}\cite[Corollary~5.4]{Hov01}
Suppose that $(\Phi,\tau)\colon (\cC,G)\to (\cD,H)$ is a map of pairs such that $H$ is a Quillen equivalence. Then there is a functor $\widetilde\Phi\colon\Ho\Sp^\N(\cC,G)\to\Ho\cD$ such that
\begin{equation*}
\vcenter{\xymatrix@C=2em@R=1em{
\Ho\cC\ar[rr]^-{L\Phi}\ar[dr]_-{LF_0}&&\Ho\cD\\
&\Ho\Sp^\N(\cC,G)\ar[ur]_-{\widetilde\Phi}
}}
\end{equation*}
commutes. In this case, $F_0\colon \cC\to\Sp^\N(\cC,G)$ is the (left Quillen) stabilization functor and $LF_0$ its left derived functor.
\end{theorem}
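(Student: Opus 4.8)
The plan is to realize $\widetilde\Phi$ as the composite
\[
\Ho\Sp^\N(\cC,G)\xrightarrow{\ L\Sp^\N(\Phi)\ }\Ho\Sp^\N(\cD,H)\xrightarrow{\ \sim\ }\Ho\cD,
\]
where the first functor is the derived prolongation of $\Phi$ to spectrum objects and the second is an inverse to the derived stabilization of $\cD$, which exists precisely because $H$ is assumed to be a Quillen equivalence.

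First I would recall Hovey's prolongation: the data of a map of pairs $(\Phi,\tau)$ is exactly what is needed to extend $\Phi$ to a functor $\Sp^\N(\Phi)\colon\Sp^\N(\cC,G)\to\Sp^\N(\cD,H)$ whose right adjoint sends a spectrum $(Y_n)$ to $(\Phi^{\mathrm R}Y_n)$, with structure maps assembled from $\Phi^{\mathrm R}$ of those of $Y$ and the mate $G\Phi^{\mathrm R}\Rightarrow\Phi^{\mathrm R}H$ of $\tau$. Comparing right adjoints gives $\Sp^\N(\Phi)\circ F_n\cong F_n\circ\Phi$ for all $n$, in particular $\Sp^\N(\Phi)\circ F_0\cong F_0\circ\Phi$, and $\Sp^\N(\Phi)$ is automatically left Quillen for the projective model structures since $\Phi^{\mathrm R}$ preserves level fibrations and level trivial fibrations. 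The real work is to promote this to a left Quillen functor for the \emph{stable} structures. Since $\cC$ and $\cD$ are left proper cellular, Hirschhorn's criterion for a map of left Bousfield localizations reduces this to checking that $\Sp^\N(\Phi)$ sends the maps inverted in $\Sp^\N(\cC,G)$ — the structure maps $F_{n+1}GA\to F_nA$ with $A$ a cofibrant generator — to stable equivalences; one computes that such a map goes to the composite $F_{n+1}\Phi GA\xrightarrow{F_{n+1}\tau_A}F_{n+1}H\Phi A\to F_n\Phi A$, whose second factor is one of the maps inverted in $\Sp^\N(\cD,H)$ and whose first factor is a level equivalence because $\tau_A$ is a weak equivalence between cofibrant objects — this is the single nontrivial clause in the definition of a map of pairs, used together with the fact that left Quillen functors preserve cofibrancy. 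Thus $\Sp^\N(\Phi)$ is left Quillen for the stable structures and $L\Sp^\N(\Phi)$ is defined.

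Next I would invoke Hovey's analysis of the stable model structure — it is the left Bousfield localization in which the prolonged endofunctor becomes a Quillen equivalence, and consequently, when the endofunctor is \emph{already} a Quillen equivalence, the stabilization $F_0$ is itself a Quillen equivalence. Applied to $H$ on $\cD$, this says $LF_0^{\cD}\colon\Ho\cD\to\Ho\Sp^\N(\cD,H)$ is an equivalence of categories; fixing a quasi-inverse $\Psi$, set $\widetilde\Phi:=\Psi\circ L\Sp^\N(\Phi)$. Commutativity of the triangle is then formal: total left derived functors compose, $L(VU)\cong LV\circ LU$ for composable left Quillen functors, so using the prolongation identity $\Sp^\N(\Phi)\circ F_0^{\cC}\cong F_0^{\cD}\circ\Phi$ one gets
\[
\widetilde\Phi\circ LF_0^{\cC}\cong\Psi\circ L\Sp^\N(\Phi)\circ LF_0^{\cC}\cong\Psi\circ LF_0^{\cD}\circ L\Phi\cong L\Phi,
\]
and the displayed triangle of homotopy categories commutes up to this natural isomorphism.

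The two formal endpoints — constructing the prolongation and chasing the triangle — are routine. I expect the main obstacle to be the middle step: showing that $\Sp^\N(\Phi)$ descends to the stable model structures is exactly where the hypotheses must be pinned down (why a map of pairs is required to make $\tau$ a weak equivalence on cofibrant objects, and where left properness and cellularity enter via Hirschhorn's localization machinery), while the input that the stabilization of $\cD$ is a Quillen equivalence once $H$ is one — i.e. that $\Sp^\N(-,-)$ genuinely does nothing to a model category whose structural endofunctor is already invertible — is the substantive result of Hovey's on which the whole corollary rests.
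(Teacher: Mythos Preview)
The paper does not prove this statement at all: it is quoted in the introduction purely as background, with an explicit citation to \cite[Corollary~5.4]{Hov01}, and no argument is given or attempted. Your outline is a reasonable sketch of how Hovey's own proof proceeds (prolong $\Phi$ to spectra, check it is left Quillen for the stable structures using the map-of-pairs hypothesis, and invert the stabilization of $\cD$ using that $H$ is already a Quillen equivalence), but there is simply nothing in this paper to compare it against.
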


Lurie and Hovey obviously differ in their approaches, but recover similar results when we let the left Quillen endofunctor $G$ be suspension.

We should give some support for the theory of derivators over $\infty$-categories or model categories. Using derivators, we obtain a result that subsumes Lurie's (and Hovey's for $G=\Sigma$) without appealing to $\infty$-theoretic techniques. In particular, any (locally) presentable $\infty$-category gives rise to a derivator that our machinery can stabilize, and our universal property agrees with Lurie's in the presence of an adjoint functor theorem.

Furthermore, we characterize stabilization as a localization of a certain 2-category of derivators, which gives us a functoriality that does not exist for model categories. Lastly, we have an explicit formula for the stabilization morphism (Definition~\ref{defn:stabdefn}), which is lacking for $\infty$-categories in many cases. Stabilization in derivators with respect to other endomorphisms is also under investigation by Groth and Shulman in \cite{GroShu17}, who use the framework of enriched derivators and weighted (co)limits to obtain some nice results for arbitrary derivators. A sequel to the latter paper entitled \emph{Abstract stabilization: the universal absolute} is in preparation.

To give an outline: we first recall the theory of derivators as necessary to understand this paper in \S2. In \S3 we collect some results on the localization theory of derivators and adapt some categorical propositions to the derivator context. We then apply these results to a general theory of `vanishing subderivators' in \S4.  We study in \S5 a particular vanishing subderivator that we use as an intermediate step towards stabilization. We establish the actual construction of the stabilization morphism in \S6, and we conclude with its universal property in \S7.

The author would like to thank Paul Balmer, Moritz Groth, and the UCLA Derivators Seminar for their support. Most of all, the author would like to thank the University of Seville for hosting him as a visiting researcher and Fernando Muro for spending a tireless week hammering out errors in the proof of the universal property.

\section{Preliminaries on (pre)derivators}

A good first reading on derivators is \cite{Gro13}, to which we will refer repeatedly throughout. While our results apply broadly, our proof methodology has a diagrammatic flavor that is emblematic of the theory of derivators.

\begin{defn}
A \emph{prederivator} is a strict 2-functor $\D\colon\Cat\op\to\CAT$. Specifically, to each small category $K\in\Cat$ we assign a (not necessarily small) category\linebreak $\D(K)\in\CAT$, to each functor $u\colon J\to K$ a functor $u^\ast\colon\D(K)\to\D(J)$, and to each natural transformation $\alpha\colon u\to v$ a natural transformation $\alpha^\ast\colon u^\ast\to v^\ast$.
\end{defn}

\begin{defn}
A \emph{derivator} is a prederivator $\D$ satisfying the following four axioms:
\begin{enumerate}
\item[(Der1)] $\D$ sends coproducts to products, \ie the natural map
\begin{equation*}
\D\left(\coprod_{i\in I}K_i\right)\longrightarrow\prod_{i\in I}\D(K_i).
\end{equation*}
is an equivalence of categories for any set $I$ and categories $K_i$. As a special case, $\D(\varnothing)\simeq e$, where $\varnothing$ is the initial (empty) category and $e$ is the final category.

\item[(Der2)] A morphism $f\colon X\to Y$ in $\D(K)$ is an isomorphism if and only if \linebreak$k^\ast f\colon k^\ast X\to k^\ast Y$ is an isomorphism in $\D(e)$ for the functors $k\colon e\to K$ \linebreak classifying each object $k\in K$.

\item[(Der3)] For every functor $u\colon J\to K$, $u^\ast\colon \D(K)\to\D(J)$ has a left adjoint \linebreak$u_!\colon\D(J)\to\D(K)$, called the \emph{left (homotopy) Kan extension} of $u$ and a right adjoint $u_\ast\colon\D(J)\to\D(K)$, called the \emph{right (homotopy) Kan extension} of $u$.

\item[(Der4)] For every functor $u\colon J\to K$ and any object $k\in K$, if we identify $k$ with $k\colon e\to K$, we have a natural transformation $\alpha$ given by
\begin{equation*}
\xymatrix{
J_{/k}\ar[r]^{\text{pr}}\ar[d]_{\pi\,=\,\pi_{J_{/k}}}&J\ar[d]^u\ar@{}[dl]|{\swtrans}\ar@{}[dl]<-1.2ex>|{\alpha}\\
e\ar[r]_k&K
}
\end{equation*}
The category $J_{/k}$ is the slice category, and the functor $\text{pr}$ is the forgetful functor, where an object $(j,f\colon u(j)\to k)\in J_{/k}$ is sent to $j\in J$. The calculus of mates (see, for example, \cite[Appendix A]{GroPonShu14} or \cite{KelStr74}) yields
\begin{equation*}
\xymatrix{
\D(e)&\ar[l]_-{\pi_!}\D(J_{/k})\ar@{}[dl]|{\swtrans}&\D(J)\ar@{}[dl]|{\swtrans}\ar@{}[dl]<-1.2ex>|{\alpha^\ast}\ar[l]_-{\text{pr}^\ast}&{}\ar@{}[dl]|{\swtrans}\\
{}&\D(e)\ar@(l,d)[ul]^{=}\ar[u]^-{\pi^\ast}&\D(K)\ar[l]^-{k^\ast}\ar[u]_-{u^\ast}&\D(J)\ar[l]^-{u_!}\ar@(u,r)[ul]_{=}
}
\end{equation*}
Combining these transformations together, we obtain a natural transformation
\begin{equation*}
\alpha_!\colon\pi_!\text{pr}^\ast \longrightarrow k^\ast u_!
\end{equation*}
We require this to be a natural isomorphism for any functor $u$ and any $k\in K$. Dually, using the slice category $J_{k/}$, we require
\begin{equation*}
\alpha_\ast\colon k^\ast u_\ast \longrightarrow \pi_\ast\text{pr}^\ast 
\end{equation*}
to be a natural isomorphism for any functor $u$ and any $k\in K$.
\end{enumerate}
\end{defn}

There is a `fifth axiom' for derivators which is not necessary in all contexts, ours included. We mention it, however, because it is present in Heller's original paper and important for his arguments.

Let $[1]$ be the arrow category, that is, the diagram $0\to 1$. We have two functors $s,t\colon e\to[1]$ classifying the source and target and a (unique) natural transformation $\alpha\colon s\to t$. These data assemble to, for any prederivator $\D$, a functor $\dia_{[1]}\colon\D([1])\to~\D(e)^{[1]}$, which we call the \emph{underlying diagram functor}. In words, it takes a `coherent' arrow-shaped diagram and sends it to an `incoherent' arrow. There is a similar construction of the \emph{partial underlying diagram functor} $\dia_{[1],K}\colon\D([1]\times K)\to\D(K)^{[1]}$ by leaving the $K$ dimension of the diagram coherent.
\begin{defn}\label{defn:strong}
(Der5) We say a prederivator is \emph{strong} if for any category $K\in \Cat$, the functor $\dia_{[1],K}:\D([1]\times K)\to\D(K)^{[1]}$ is full and essentially surjective. 
\end{defn}

A \emph{morphism of (pre)derivators} $\Phi\colon\D\to\E$ is a pseudonatural transformation of 2-functors. In particular, for each $K\in\Cat$ we have a functor $\Phi_K\colon\D(K)\to\E(K)$ and for every $u\colon J\to K$ we have a natural isomorphism $\gamma_u^\Phi\colon u^\ast \Phi_K\to \Phi_J u^\ast$
\begin{equation*}
\xymatrix{
\D(K)\ar[r]^{\Phi_K}\ar[d]_-{u^\ast}&\E(K)\ar[d]^-{u^\ast}\ar@{}[dl]|{\swtrans}\ar@{}@<-1.75ex>[dl]|{\gamma_u^\Phi}\\
\D(J)\ar[r]_{\Phi_J}&\E(J)
}
\end{equation*}
The family $\{\gamma_u^\Phi\}$ is subject to coherence conditions. Foremost, for $u\colon J\to K$ and $v\colon I\to J$ we require that the pasting on the left be equal to the square on the right:
\begin{equation*}
\vcenter{\xymatrix{
\D(K)\ar[r]^{\Phi_K}\ar[d]_-{u^\ast}&\E(K)\ar[d]^-{u^\ast}\ar@{}[dl]|{\swtrans}\ar@{}@<-1.75ex>[dl]|{\gamma_u^\Phi}\\
\D(J)\ar[d]_-{v^\ast}\ar[r]^{\Phi_J}&\E(J)\ar[d]^-{v^\ast}\ar@{}[dl]|{\swtrans}\ar@{}@<-1.75ex>[dl]|{\gamma_v^\Phi}\\
\D(I)\ar[r]_{\Phi_I}&\E(I)
}}
\quad=\quad
\vcenter{\xymatrix{
\D(K)\ar[r]^{\Phi_K}\ar[d]_-{(uv)^\ast}&\E(K)\ar[d]^-{(uv)^\ast}\ar@{}[dl]|{\swtrans}\ar@{}@<-1.75ex>[dl]|{\gamma_{uv}^\Phi}\\
\D(I)\ar[r]_{\Phi_I}&\E(I)
}}
\end{equation*}
In addition, we require $\gamma_{\id_J}^\Phi=\id_{\D(J)}$. There is also compatibility with natural transformations in $\Cat$. For two functors $u,v\colon J\to K$ and a natural transformation $\alpha\colon u\to v$, we require the below pastings to be equal:
\begin{equation*}
\vcenter{\xymatrix{
\D(K)\ar[r]^{\Phi_K}\ar[d]^-{u^\ast}="S0"\ar@(l,l)[d]_-{v^\ast}="T0"&\E(K)\ar[d]^-{u^\ast}\ar@{}[dl]|{\swtrans}\ar@{}@<-1.75ex>[dl]|{\gamma_u^\Phi}\\
\D(J)\ar[r]_{\Phi_J}&\E(J)
\ar@{}"S0";"T0" |{\mathbin{\rotatebox[origin=c]{180}{$\Rightarrow$}}}_-{\alpha^\ast}
}}
\quad=\quad
\vcenter{\xymatrix{
\D(K)\ar[r]^{\Phi_K}\ar[d]_-{v^\ast}&\E(K)\ar[d]_-{v^\ast}="T0"\ar@(r,r)[d]^-{u^\ast}="S0"\ar@{}[dl]|{\swtrans}\ar@{}@<-1.75ex>[dl]|{\gamma_v^\Phi}\\
\D(J)\ar[r]_{\Phi_J}&\E(J)
\ar@{}"S0";"T0" |{\mathbin{\rotatebox[origin=c]{180}{$\Rightarrow$}}}_-{\alpha^\ast}
}}
\end{equation*}
Details can be found in \cite[Definition~7.5.2]{Bor94b}. A morphism $\Phi$ is an \emph{equivalence of derivators} if $\Phi_K$ is an equivalence of categories for every $K\in\Cat$.

If $\Phi,\Psi\colon\D\to \E$ are two morphisms of derivators, a natural transformation \linebreak$\rho\colon\Phi\to\Psi$ is given by a \emph{modification} of pseudonatural transformations of 2-functors. This is a natural transformation $\rho_K\colon\Phi_K\to\Psi_K$ for every $K\in\Cat$ satisfying the following coherence condition: if $u,v\colon J\to K$ are two functors and $\alpha\colon u\to v$ is a natural transformation, then we have an equality of pastings
\begin{equation*}
\vcenter{\xymatrix@C=3em{
\D(K)\ar@/^1pc/[r]^-{u^\ast}="S1"\ar@/_1pc/[r]_-{v^\ast}="T1"&\D(J)\ar@/^1pc/[r]^-{\Phi_J}="S0"\ar@/_1pc/[r]_-{\Psi_J}="T0"&\E(J)
\ar@{} "S0";"T0" |{\mathbin{\rotatebox[origin=c]{270}{$\Rightarrow$}}} \ar@{}@<-1.4ex>"S0";"T0"|{\rho_J}
\ar@{} "S1";"T1" |{\mathbin{\rotatebox[origin=c]{270}{$\Rightarrow$}}} \ar@{}@<-1.4ex>"S1";"T1"|{\alpha^\ast}
}}
\quad=\quad
\vcenter{\xymatrix@C=3em{
\D(K)\ar@/^1pc/[r]^-{\Phi_K}="S0"\ar@/_1pc/[r]_-{\Psi_K}="T0"&\E(K)\ar@/^1pc/[r]^-{u^\ast}="S1" \ar@/_1pc/[r]_-{v^\ast}="T1"&\E(J)
\ar@{} "S0";"T0" |{\mathbin{\rotatebox[origin=c]{270}{$\Rightarrow$}}} \ar@{}@<-1.4ex>"S0";"T0"|{\rho_K}
\ar@{} "S1";"T1" |{\mathbin{\rotatebox[origin=c]{270}{$\Rightarrow$}}} \ar@{}@<-1.4ex>"S1";"T1"|{\alpha^\ast}
}}
\end{equation*}
See also \cite[Definition~7.5.3]{Bor94b}. A modification $\rho$ is called an \emph{isomodification} if $\rho_K$ is a natural isomorphism for every $K\in\Cat$.

\begin{notn}
For two prederivators $\D$ and $\E$, we let $\Hom(\D,\E)$ denote the category with objects morphisms of prederivators and maps modifications. This gives us a 2-category $\mathbf{PDer}$ and a full 2-subcategory $\mathbf{Der}$ consisting of all derivators.
\end{notn}

A ready source for morphisms of derivators comes from functors in $\Cat$. Let $\D$ be a prederivator, and let $u\colon J\to K$ be a functor. We denote by $\D^A$ the `shifted' prederivator taking $J\mapsto \D(J\times A)$ and $u\mapsto (u\times \id_A)^\ast\colon\D(K\times A)\to\D(J\times A)$. If $\D$ has other nice properties (\eg being a derivator), so will $\D^A$. See \cite[Theorem~1.25]{Gro13} or \cite[Proposition~4.3]{Gro13} for two examples.

We can think of the functor $u^\ast\colon \D(K)\to\D(J)$ instead as a morphism of the corresponding shifted derivators $u^\ast\colon\D^K\to\D^J$. The same is true for $u_!,u_\ast$ when they exist. These will comprise the majority of morphisms appearing in our constructions.

We refer to $\D(e)$ as the \emph{base} of the derivator and think of $\D(K)$ as coherent\linebreak $K$-shaped diagrams with objects and morphisms in $\D(e)$. Sometimes we call $\D(K)$ the \emph{levels} of the derivator. With this in mind, we will use the notation $X\in\D$ to refer to $X\in\D(K)$ for some $K\in\Cat$ when working with morphisms of derivators. For example, if we have a morphism $\Phi\colon\D\to\E$ of derivators and we wish to prove something about $\Phi_KX$ for all $K\in\Cat$ and all $X\in\D(K)$, it will be easier to write $\Phi X$ for $X\in\D$, leaving `at any category $K$' or `levelwise' understood.

\begin{remark}
It is not strictly necessary that we take as domain of our derivators all of $\Cat$. A list of axioms for suitable (sub)categories of diagrams $\Dia\subset\Cat$ can be found at \cite[Definition~1.12]{Gro13} or \cite[p.3]{Mal07}.

A common choice for $\Dia$ is $\mathbf{Dir_f}$, the 2-category of finite direct categories, \ie categories whose nerve has finitely many nondegenerate simplices. Keller in \cite{Kel07} gives a construction of a triangulated derivator with domain $\mathbf{Dir_f}$ for any exact category. Cisinski in \cite[Th\'eor\`eme~2.21]{Cis10} proves that a Waldhausen category whose weak equivalences satisfy some mild properties gives rise to a left derivator with domain $\mathbf{Dir_f}$, \ie one which admits only left Kan extensions $u_!$ and satisfies the corresponding half of Der4.

However, we will \emph{not} be able to study these sorts of derivators directly. Heller's (and thus our) approach uses infinite posets that are not diagrams in $\mathbf{Dir_f}$.
\end{remark}

Fix a domain $\Dia$ that contains all diagram shapes that appear in the rest of this paper, for instance the 2-subcategory of all (possibly infinite) posets $\mathbf{Pos}$ or $\Cat$.\pagebreak

\section{Localization of derivators}

We first need to recall two special classes of morphisms of derivators.

\begin{defn}
Let $u\colon J\to K$ be a functor in $\Dia$. A morphism $\Phi\colon\D\to\E$ \emph{preserves left Kan extensions along $u$} if the natural transformation $u_!\Phi_J\to\Phi_K u_!$ given by the below pasting
\begin{equation*}
\xymatrix{
\D(J)\ar@(d,l)[dr]_-=\ar[r]^-{u_!}&\D(K)\ar@{}[dl]|{\netrans}\ar[r]^{\Phi_K}\ar[d]_-{u^\ast}&\E(K)\ar[d]^-{u^\ast}\ar@{}[dl]|{\netrans}\ar@{}@<-1.75ex>[dl]|{(\gamma_u^\Phi)\inv}\ar@(r,u)[dr]^-=&\ar@{}[dl]|{\netrans}\\
{}&\D(J)\ar[r]_{\Phi_J}&\E(J)\ar[r]_{u_!}&\E(K)
}
\end{equation*}
is an isomorphism. A morphism $\Phi$ is called \emph{cocontinuous} if it preserves left Kan extensions along every $u\colon J\to K$ in $\Dia$.

Dually, $\Phi$ \emph{preserves right Kan extensions along $u$} if the natural transformation $\Phi_K u_\ast\to u_\ast \Phi_J$ given by
\begin{equation*}
\xymatrix{
\D(J)\ar@(d,l)[dr]_-=\ar[r]^-{u_\ast}&\D(K)\ar@{}[dl]|{\swtrans}\ar[r]^{\Phi_K}\ar[d]_-{u^\ast}&\E(K)\ar[d]^-{u^\ast}\ar@{}[dl]|{\swtrans}\ar@{}@<-1.75ex>[dl]|{\gamma_u^\Phi}\ar@(r,u)[dr]^-=&\ar@{}[dl]|{\swtrans}\\
{}&\D(J)\ar[r]_{\Phi_J}&\E(J)\ar[r]_{u_\ast}&\E(K)
}
\end{equation*}
is an isomorphism. A morphism $\Phi$ is called \emph{continuous} if it preserves right Kan extensions along every $u\colon J\to K$ in $\Dia$.
\end{defn}

There is a ready source of continuous and cocontinuous morphisms of derivators.

\begin{defn}
Let $\Phi\colon\D\to\E$ and $\Psi\colon\E\to\D$ be two morphisms of derivators. We say that $\Phi$~is \emph{left adjoint} to $\Psi$ (equivalently, $\Psi$ is \emph{right adjoint} to $\Phi$) if there exist two modifications $\eta\colon \id_\D\to\Psi\Phi$ and $\varepsilon\colon \Phi\Psi\to\id_\E$ satisfying the usual triangle identities.
\end{defn}
At each $K\in\Cat$ we have an adjunction of categories $(\Phi_K,\Psi_K)$, but the units and counits of these adjunctions must assemble to modifications. Unlike in the case of equivalences, a morphism $\Phi\colon\D\to\E$ that admits levelwise right adjoints may not be a left adjoint morphism of derivators. In fact, this is the case if any only if $\Phi$ is cocontinuous; see \cite[Proposition~2.9]{Gro13}. Dually, a morphism $\Psi$ admitting levelwise left adjoints is itself a right adjoint morphism of derivators if and only if it is continuous.

In particular, for $u\colon J\to K$, the morphisms $u^\ast\colon\D^K\to\D^J$ and $u_!\colon\D^J\to\D^K$ are left adjoint morphisms, hence cocontinuous. Dually, the morphisms $u^\ast\colon\D^K\to~\D^J$ and $u_\ast\colon\D^J\to\D^K$ are right adjoint morphisms, hence continuous. For certain $u\colon J\to K$ and suitable $\D$, the morphism $u_\ast\colon\D^J\to\D^K$ is cocontinuous as well, as we will see in Definition~\ref{defn:extraadjoint}.

We now turn to the localization theory of derivators. There is no one resource for this topic at the moment, but we will try to summarize the theory as it relates to our ultimate goal of stabilization. For a general reference on localizations of categories, see \cite[\S1]{GabZis67} or \cite{Kra10}.

Let $\cC$ be an ordinary category. If we have some class of morphisms $S\subset\cC^{[1]}$ that we would like to invert, we can ask whether there is a category $\cC[S\inv]$ and functor $L_S\colon\cC\to\cC[S\inv]$ inverting $S$ and admitting a fully faithful left or right adjoint. The localization functor $L_S$ and the category $\cC[S\inv]$ are essentially unique and satisfy a universal property in $\CAT$. There are more specialized localization theories in, for example, model categories or triangulated categories.

However, we are going to work with a broad class of derivators so will not necessarily have more refined machinery. Moreover, we will care less about starting with a class of morphisms in $\D(e)$, but will have some subcategory of $\D(e)$ which we would like to be reflective or coreflective. If this is the case, we obtain a (co)localization of $\D(e)$ onto that subcategory and can compute which morphisms have been inverted. The former approach is the subject of an upcoming paper of Ioannis Lagkas, which we will not address here.

Let $\D$ be a derivator. A prederivator $\E$ is called a \emph{full subprederivator} of $\D$ if there is a morphism $\iota\colon\E\to\D$ which is levelwise fully faithful. There is no reason for $\D$ to reflect any of its properties onto $\E$, but there is a straightforward criterion.

\begin{lemma}\label{lemma:localisederivator}
\cite[Lemme~4.2]{Cis08} Let $\E$ be a full subprederivator of a derivator~$\D$. Assume that the morphism $\iota\colon\E\to\D$ admits either a left adjoint $L$ or right adjoint $R$. Then $\E$ is also a derivator.
\end{lemma}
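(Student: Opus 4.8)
The plan is to verify the four derivator axioms for $\E$ by transporting them across the levelwise adjunction, treating the case of a left adjoint $L$ (the case of a right adjoint $R$ is dual). Throughout I will use that $\iota$ is levelwise fully faithful, so each $\iota_K\colon\E(K)\to\D(K)$ realizes $\E(K)$ as a reflective subcategory of $\D(K)$ via the adjunction $(L_K,\iota_K)$; in particular $\E(K)$ is closed under all limits that exist in $\D(K)$, and has colimits computed by applying $L_K$ to the colimit in $\D(K)$.

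First I would check (Der1) and (Der2), which are essentially formal. For (Der1), since $\iota$ is a morphism of prederivators, the canonical functor $\E(\coprod_i K_i)\to\prod_i\E(K_i)$ is compatible with the corresponding functor for $\D$ via the fully faithful maps $\iota$; one checks directly that a levelwise fully faithful morphism with a left adjoint reflects and preserves products of categories at the level of $\Cat$, so (Der1) for $\D$ gives (Der1) for $\E$. For (Der2), a morphism $f$ in $\E(K)$ is an isomorphism iff $\iota_K f$ is (full faithfulness), iff each $k^\ast\iota_K f$ is an isomorphism in $\D(e)$ (Der2 for $\D$), iff each $\iota_e k^\ast f$ is an isomorphism (pseudonaturality of $\iota$), iff each $k^\ast f$ is an isomorphism in $\E(e)$ (full faithfulness again).

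The substance is (Der3) and (Der4). For (Der3), given $u\colon J\to K$ I must produce adjoints $u_!^\E, u_\ast^\E$ to $u^\ast\colon\E(K)\to\E(J)$. Define $u_\ast^\E := R'\circ u_\ast^\D\circ\iota_J$ where... no: more simply, since $\E(K)$ sits reflectively in $\D(K)$, the right adjoint is $u_\ast^\E = $ (restriction of $u_\ast^\D$): because $\E$ is closed under the relevant limits and $u_\ast^\D\iota_J$ lands in... this needs care, so instead I would argue abstractly. We have $u^\ast\colon\E(K)\to\E(J)$; compose the chain of adjunctions $L_K\dashv\iota_K$, $u_!^\D\dashv (u^\ast)^\D$, $\iota_J\dashv L_J$... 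The clean statement: $u^\ast_\E$ admits a left adjoint given by $L_K u_!^\D\iota_J$, since for $X\in\E(J)$, $Y\in\E(K)$ one has $\Hom_{\E(K)}(L_K u_!^\D\iota_J X,Y)\cong\Hom_{\D(K)}(u_!^\D\iota_J X,\iota_K Y)\cong\Hom_{\D(J)}(\iota_J X, u^\ast_\D\iota_K Y)\cong\Hom_{\D(J)}(\iota_J X,\iota_J u^\ast_\E Y)\cong\Hom_{\E(J)}(X,u^\ast_\E Y)$, using pseudonaturality of $\iota$ in the middle; dually $u_\ast^\E := R_K u_\ast^\D\iota_J$ where $R_K$ is... but there is no right adjoint $R_K$ in the left-adjoint case --- here instead $\E(K)$ is closed under limits in $\D(K)$, so $u_\ast^\D\iota_J X$ already lies in $\E(K)$ up to the counit, and $u_\ast^\E Y := u_\ast^\D\iota_J Y$ works directly. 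This asymmetry is the point where I expect to spend real effort.

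Finally, (Der4): I need the base change map $\pi_!^\E\,\mathrm{pr}^\ast\to k^\ast u_!^\E$ to be an isomorphism in $\E(e)$. Since $u_!^\E = L u_!^\D\iota$ and $\pi_!^\E = L\pi_!^\D\iota$, and $k^\ast$, $\mathrm{pr}^\ast$ commute with the fully faithful $\iota$ up to the structure isomorphisms of $\iota$, I would unwind both sides: apply $\iota_e$ (which is conservative, being fully faithful) and reduce to the statement that $\iota_e L_e\pi_!^\D\mathrm{pr}^\ast\iota_J \to \iota_e L_e k^\ast u_!^\D\iota_J$ is an isomorphism. Using that $k^\ast$ commutes with $L$ appropriately and (Der4) for $\D$, this reduces to $\iota_e L_e$ applied to the $\D$-isomorphism $\pi_!^\D\mathrm{pr}^\ast\to k^\ast u_!^\D$; the remaining subtlety is that $\iota L\pi_!^\D$ may differ from $\pi_!^\D$ on objects not in the image of $\iota$, but here everything is precomposed with $\iota_J$, and one checks $L_e\pi_!^\D\iota_{J_{/k}} \cong \pi_!^\E(\cdot|_{\E})$ so the reflector is absorbed correctly. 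The right-adjoint half of (Der4) is then genuinely dual: since $u_\ast^\E$ is computed by the same formula as $u_\ast^\D$ on objects of $\E$, the isomorphism $k^\ast u_\ast^\D\to\pi_\ast^\D\mathrm{pr}^\ast$ for $\D$ restricts directly. The main obstacle, as flagged, is keeping straight which of $u_!^\E,u_\ast^\E$ is "computed as in $\D$" versus "computed by reflecting"; in the presence of a left adjoint $L$ it is $u_\ast^\E$ that agrees with $u_\ast^\D$, and one must check the unit/counit bookkeeping in the base-change squares accordingly, which is where a careless argument would break down.
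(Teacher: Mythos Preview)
Your overall strategy matches the paper's: verify Der1 and Der2 formally via full faithfulness and pseudonaturality of $\iota$, build $u_!^\E = L_K u_!^\D \iota_J$ from the evident chain of adjunctions, and reduce Der4 to the base change isomorphism in $\D$ using that $L$ and $\iota$ commute with pullbacks.

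The gap is precisely where you flag it: the construction of $u_\ast^\E$ in the case that $\iota$ has a left adjoint. Your justification that $u_\ast^\D\iota_J X$ lies in $\E(K)$ because ``$\E(K)$ is closed under limits in $\D(K)$'' does not work as stated. Reflective subcategories are closed under \emph{categorical} limits computed in the ambient category, but $u_\ast^\D$ is not a limit taken in the category $\D(K)$; it is the derivator-level right Kan extension, a right adjoint to $u^\ast$, and there is no diagram in $\D(K)$ whose limit it computes. The paper instead shows directly that $u_\ast^\D\iota_J X$ is $L_K$-local: given $f\colon a\to b$ in $\D(K)$ with $L_K f$ invertible, the adjunctions $(u^\ast,u_\ast^\D)$ and $(L_J,\iota_J)$ together with the isomorphism $L_J u^\ast\cong u^\ast L_K$ (pseudonaturality of $L$) identify $\Hom_{\D(K)}(-,u_\ast^\D\iota_J X)$ applied to $f$ with $\Hom_{\E(J)}(u^\ast L_K(-), X)$ applied to $f$, which is invertible. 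The essential input is thus not closure under limits but that $L$, being a morphism of prederivators, commutes with $u^\ast$. Once this is established, your formula $u_\ast^\E = u_\ast^\D\iota_J$ is correct (equivalently the paper's $L_K u_\ast^\D\iota_J$, since the reflection unit is an isomorphism on local objects), and both halves of Der4 go through by the argument you sketch.
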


If $\iota$ admits a left adjoint, we call $\E$ a \emph{localization} of $\D$, and if $\iota$ admits a right adjoint we call it a \emph{colocalization}. We give the proof of this lemma because Cisinski's result uses terminology different from ours, contains several typographical errors, and is in French.

\begin{proof}
Let us prove Der1 and Der2 first, which do not depend on which side the adjoint is. For Der1, consider the following commutative diagram for $K_i\in\Dia$:
\begin{equation*}
\vcenter{\xymatrix@C=4em{\displaystyle
\E\left(\displaystyle\coprod_{i\in I} K_i\right)\ar[r]^-{\iota_{\coprod K_i}}\ar[d]_-{\prod j_i^\ast}&\D\left(\displaystyle\coprod_{i\in I}K_i\right)\ar@{}[dl]|\swtrans\ar[d]_-{\prod j_i^\ast}\ar[r]^{A_{\coprod K_i}}&\E\left(\displaystyle\coprod_{i\in I} K_i\right)\ar[d]^-{\prod j_i^\ast}\ar@{}[dl]|\swtrans\\
\displaystyle\prod_{i\in I}\E(K_i)\ar[r]_-{\prod \iota_{K_i}}&\displaystyle\prod_{i\in I}\D(K_i)\ar[r]_-{\prod A_{K_i}}& \displaystyle\prod_{i\in I}\E(K_i)
}}
\end{equation*}
where $A$ is the adjoint to $\iota$. Because $\D$ satisfies Der1, the middle vertical functor is an equivalence of categories. We would like to prove that the other vertical functor is also an equivalence, so we will show it is fully faithful and essentially surjective.

First we examine the lefthand square. The functor $\iota_{\coprod K_i}$ is fully faithful by assumption, hence the top composition $\prod j_i^\ast\circ\iota_{\coprod K_i}$ is fully faithful. Thus the bottom composition\linebreak $\prod \iota_{K_i}\circ \prod j_i^\ast$ is also fully faithful, and by assumption $\prod \iota_{K_i}$ is fully faithful. For any $X,Y\in \E(\coprod K_i)$, we have isomorphisms
\begin{equation*}
\vcenter{\xymatrix@C=0em{
&\Hom_{\prod\E(K_i)}(\prod j_i^\ast X, \prod j_i^\ast Y)\ar[dd]_-\cong^-{\prod \iota_{K_i}}\\
\Hom_{\E(\coprod K_i)}(X,Y)\ar[ur]^(0.4){\prod j_i^\ast}\ar[dr]^-\cong\\
&\Hom_{\prod\D(K_i)}(\prod \iota_{K_i}(\prod j_i^\ast X),\prod \iota_{K_i}( \prod j_i^\ast Y))
}}
\end{equation*}
which proves that $\prod j_i^\ast$ induces an isomorphism on hom-sets, \ie is fully faithful.

Next we examine the righthand square. Whether $A$ is the left or right adjoint to $\iota$, it is levelwise essentially surjective. For $A=L$, because $\iota_K$ is fully faithful, the counit of the adjunction $L_K\iota_K\Rightarrow \id_{\E(K)}$ is an isomorphism. Hence for any $X\in \E(K)$, we have $X\cong L_K(\iota_KX)$, so $X$ is in the essential image of $L_K$. For $A=R$, the unit of the adjunction is an isomorphism and the same result follows.

The bottom composition $\prod A_{K_i}\circ \prod j_i^\ast$ is thus essentially surjective, which implies the top composition $\prod j_i^\ast \circ A_{\coprod K_i}$ is too. Thus for any $Z\in \prod\E(K_i)$, there exists some $X\in \D(\coprod K_i)$ such that $\prod j_i^\ast (A_{\coprod K_i} X)\cong Z$. In particular, the object\linebreak $A_{\coprod K_i} X=Y\in \E(\coprod K_i)$ satisfies $\prod j_i^\ast Y\cong Z$, so that $\prod j_i^\ast$ is essentially surjective as well. Having shown that $\prod j_i^\ast$ is both fully faithful and essentially surjective, it is an equivalence of categories and thus $\E$ satisfies Der1.

For Der2, suppose that $f\colon X\to Y$ is a map in $\E(K)$. We need to show that $f$ is an isomorphism if and only if $k^\ast f\colon k^\ast X\to k^\ast Y$ is an isomorphism for all $k\in K$. The image $\iota_K f$ in $\D(K)$ is an isomorphism if and only if $f$ is an isomorphism as $\iota_K$ is fully faithful. Further, $k^\ast \iota_K f \cong \iota_e k^\ast f$ as $\iota$ is a morphism of (pre)derivators, and $\iota_e k^\ast f$ is an isomorphism if and only if $k^\ast f$ is an isomorphism. Putting all these implications together, we obtain Der2 for $\E$.

Now we prove that $\E$ admits left and right Kan extensions, and at this point we assume $A=L$ is the left adjoint, the dual case being similar but not identical. Let $u\colon J\to K$ be a functor in $\Dia$, $X\in \E(J)$ and $Y\in \E(K)$. Then
\begin{align*}
\Hom_{\E(J)}(X,u^\ast Y)&\cong \Hom_{\D(J)}(\iota_J X, \iota_Ju^\ast Y)\\
&\cong\Hom_{\D(J)}(\iota_JX,u^\ast\iota_K Y)\\
&\cong\Hom_{\D(K)}(u_!\iota_J X,\iota_K Y)\\
&\cong\Hom_{\E(K)}(L_Ku_!\iota_J X, Y)
\end{align*}
The first isomorphism is due to  $\iota_J$ being fully faithful, the second because $\iota$ is a morphism of prederivators, the third because $\D$ admits left Kan extensions, and the fourth because $L_K$ is left adjoint to $\iota_K$. Hence we have constructed a left adjoint in $\E$ to $u^\ast$.

For the right Kan extension, we have the following chain of isomorphisms. Using the same notation as above,
\begin{align}\nonumber
\Hom_{\E(J)}(u^\ast Y, X)&\cong\Hom_{\D(J)}(\iota_J u^\ast Y,\iota_J X)\nonumber\\
&\cong \Hom_{\D(J)}(u^\ast\iota_K Y,\iota_J X)\nonumber\\
\label{eq:unitisomorphism}&\cong\Hom_{\D(K)}(\iota_K Y,u_\ast\iota_J X)\\
&\longrightarrow\Hom_{\D(K)}(\iota_K Y,\iota_KL_Ku_\ast\iota_J X)\nonumber\\
&\cong\Hom_{\E(K)}(Y,L_Ku_\ast\iota_J X)\nonumber
\end{align}

We have included here one arrow, because the argument differs here. That map is induced by composition with the unit $\id_{\D(K)}\to \iota_K L_K$ of the adjunction, and we claim that this is an isomorphism in this case. Specifically, we claim that $u_\ast \iota_JX$ is an $L_K$-local object, so that the unit of the adjunction is an isomorphism.

To see this, suppose that $f\colon a\to b$ is an arrow in $\D(K)$ such that $L_K f$ is an isomorphism in $\E(K)$. Then we obtain the following commutative diagram
\begin{equation*}
\vcenter{\xymatrix@C=4em{
\Hom_{\D(K)}(b, u_\ast \iota_J X)\ar[r]^-{-\circ f}\ar[d]_\cong&\Hom_{\D(K)}(a, u_\ast \iota_J X)\ar[d]^\cong\\
\Hom_{\D(J)}(u^\ast b,\iota_J X)\ar[r]\ar[d]_\cong&\Hom_{\D(J)}(u^\ast a,\iota_J X)\ar[d]^\cong\\
\Hom_{\E(J)}(L_Ju^\ast b, X)\ar[r]\ar[d]_\cong&\Hom_{\E(J)}(L_Ju^\ast a, X)\ar[d]^\cong\\
\Hom_{\E(J)}(u^\ast L_K b,X)\ar[r]_-{-\circ u^\ast L_K f}^\cong &\Hom_{\E(J)}(u^\ast L_K a, X)
}}
\end{equation*}

The first two vertical maps are isomorphisms using the adjunctions $(u^\ast,u_\ast)$ and $(L_J,\iota_J)$, and the last vertical isomorphism is because $L$ is a morphism of (pre)derivators. The bottom horizontal arrow is an isomorphism because $L_K f$ is an isomorphism, hence every map in the above diagram is an isomorphism. Therefore $u_\ast\iota_J X$ is an $L_K$-local object.

Therefore the lone arrow in Equation~\ref{eq:unitisomorphism} is also an isomorphism, proving that $L_Ku_\ast\iota_J$ is right adjoint to $u^\ast$ in $\E$. This proves Der3R and Der3L for $\E$.

To show Der4, we need to verify that for any $u\colon J\to K$, $k\in K$, and $X\in \E(J)$, the canonical map
\begin{equation*}
L_{e}\pi_!\iota_{(u/k)}\operatorname{pr}^\ast \!X\to k^\ast L_K u_! \iota_J X
\end{equation*}
is an isomorphism, where $\pi\colon (u/k)\to e$ is the projection to the point and\linebreak $\operatorname{pr}\colon (u/k)\to J$ the forgetful functor. Because $\iota$ and $k$ are morphisms of derivators, we have that $\iota_{(u/k)}\operatorname{pr}^\ast\cong\operatorname{pr}^\ast \iota _J$ and $k^\ast L_K\cong L_e k^\ast$. Hence the above map factors as
\begin{equation*}
\vcenter{\xymatrix{
L_{e}\pi_!\iota_{(u/k)}\operatorname{pr}^\ast X\ar[r]& k^\ast L_K u_! \iota_J X\ar[d]^-\cong\\
L_e\pi_!\operatorname{pr}^\ast \iota_J X\ar[r]\ar[u]^-\cong&L_e k^\ast u_!\iota_J X
}}
\end{equation*}
The bottom map is an isomorphism, as it is $L_e$ applied to the isomorphism guaranteed by Der4 for $\D$ at $\iota_J X\in \D(J)$. Therefore the top map is an isomorphism as well. There is no difference for the right Kan extensions, as the above argument only relied on commuting pullback functors with $L$ and $\iota$. This proves that $\E$ is a derivator.
\end{proof}

In order to determine when such an adjoint as in Lemma~\ref{lemma:localisederivator} might exist, we should compare the derivator situation to the analogous situation in ordinary category theory.
\begin{prop}\label{prop:catloc1}
\cite[Proposition~2.4.1]{Kra10} Let $L\colon\cC\to\cC$ be a functor and \linebreak$\eta\colon\id_\cC\to L$ be a natural transformation. Then the following are equivalent:
\begin{enumerate}
\item $\eta_L\colon L\to L^2$ and $L\eta\colon L\to L^2$ are natural isomorphisms. 
\item There exists a functor $F\colon \cC\to\cD$ with fully faithful right adjoint $G\colon\cD\to\cC$ such that $L=G F$ and $\eta\colon \id_\cC\to G F$ is the unit of the adjunction.
\end{enumerate}
Such a functor $L$ is called a \emph{localization functor}.
\end{prop}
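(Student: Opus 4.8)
The plan is to prove the two implications separately, realizing $\cD$ in the forward direction as a full reflective subcategory of $\cC$ and recovering the whole adjunction from the data of $(L,\eta)$.

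For $(2)\Rightarrow(1)$ I would argue purely formally. Since $G$ is fully faithful, the counit $\varepsilon\colon FG\to\id_\cD$ is a natural isomorphism. Combined with the triangle identities $(\varepsilon F)\circ(F\eta)=\id_F$ and $(G\varepsilon)\circ(\eta G)=\id_G$, this forces $F\eta=(\varepsilon F)\inv$ and $\eta G=(G\varepsilon)\inv$ to be natural isomorphisms. Whiskering $F\eta$ by $G$ then shows $L\eta=GF\eta$ is a natural isomorphism, and whiskering $\eta G$ by $F$ shows $\eta_L=\eta_{GF}=(\eta G)F$ is one as well. I do not expect any difficulty here.

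For $(1)\Rightarrow(2)$ I would let $\cD\subseteq\cC$ be the full subcategory on the objects $Y$ for which $\eta_Y$ is an isomorphism, and $G$ its inclusion, fully faithful by construction. A quick naturality argument identifies $\cD$ with the essential image of $L$. Because $\eta_L$ is a natural isomorphism we have $\eta_{LX}=(\eta_L)_X$ invertible, so every $LX$ lies in $\cD$; hence $F$, defined to be $L$ on objects and arrows, is a well-defined functor $\cC\to\cD$ with $GF=L$ on the nose. The substance is to check that $\cD$ is reflective with reflector $F$ and unit $\eta$, i.e. that for each $X\in\cC$ and $Y\in\cD$ the map $\Hom_\cD(LX,Y)\to\Hom_\cC(X,Y)$, $h\mapsto h\circ\eta_X$, is a bijection. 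Surjectivity is the factorization $g=(\eta_Y\inv\circ Lg)\circ\eta_X$ coming from naturality of $\eta$ and invertibility of $\eta_Y$. Injectivity follows by applying $L$ to $h\eta_X=h'\eta_X$, cancelling the isomorphism $L\eta_X=(L\eta)_X$ to get $Lh=Lh'$, then using naturality of $\eta$ and invertibility of $\eta_Y$ to conclude $h=h'$. Naturality of these bijections in both variables is immediate from naturality of $\eta$, so they assemble into an adjunction $F\dashv G$; its unit is the image of $\id_{LX}$ under the inverse bijection, namely $\eta_X$, and $L=GF$ by construction, so $(2)$ holds.

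The one point I would take care with is the division of labour between the two clauses of $(1)$: invertibility of $\eta_L$ is exactly what makes $F$ take values in $\cD$ (and hence be defined at all), while invertibility of $L\eta$ is exactly what powers the uniqueness-of-factorization (injectivity) step. I would also remark in passing that $(1)$ implies $\eta_L=L\eta$ — the natural automorphism $\phi=(L\eta)\inv\circ\eta_L$ of $L$ satisfies $\phi\circ\eta=\eta$, and naturality of $\phi$ against the maps $\eta_X$ together with invertibility of $\eta_L$ and $L\eta$ forces $\phi=\id_L$ — but the argument above does not need this. Everything else is a routine unwinding of the reflective-subcategory formalism, so I anticipate no serious obstacle.
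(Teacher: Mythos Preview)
Your proof is correct. Note, however, that the paper does not itself prove this proposition: it is cited from Krause. The paper does, in the paragraphs following Proposition~\ref{prop:catloc2}, prove the derivator analogue (Proposition~\ref{prop:locofders}), and that argument specializes to a proof of the categorical statement which you can compare against.

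The two approaches differ in the direction $(1)\Rightarrow(2)$. The paper first invokes the auxiliary fact (cited to Boyarchenko--Drinfeld) that condition~(1) forces $\eta_L=L\eta$, then takes $\cD$ to be the essential image of $L$, defines the counit $\varepsilon$ directly as the inverse of $\eta$ restricted to $\cD$, and checks the triangle identities using $L\eta=\eta_L=(\varepsilon_L)\inv$. You instead take $\cD$ to be the objects on which $\eta$ is invertible (which you correctly identify with the essential image of $L$) and verify the hom-set bijection $\Hom_\cD(LX,Y)\cong\Hom_\cC(X,Y)$ by hand, with the two clauses of~(1) cleanly separated: $\eta_L$ invertible makes $F$ land in $\cD$, and $L\eta$ invertible gives injectivity. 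Your route is marginally more self-contained, since it does not need the $\eta_L=L\eta$ lemma as an input; the paper's route is shorter once that lemma is granted, and has the advantage of producing the counit explicitly, which is what the paper actually needs downstream.
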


Case (1) is equivalent to the seemingly stronger condition that $L\eta=\eta_L$ and $\eta_L$ is a natural isomorphism. The proof of this fact can be found, for example, at\linebreak\cite[Remark~2.3]{BoyDri06}. Case (2) is the categorical version of Lemma~\ref{lemma:localisederivator}, but we have not given the derivator case of case (1). We need one more proposition to begin that discussion.

\begin{prop}\label{prop:catloc2}
\cite[Proposition~2.6.1]{Kra10} Let $\cC$ be a category and $\cE\subset\cC$ a replete, full subcategory. Then the following are equivalent:
\begin{enumerate}
\item There exists a localization functor $L\colon\cC\to\cC$ with essential image $\cE$.
\item The inclusion functor $\cE\to \cC$ admits a left adjoint.
\end{enumerate}
\end{prop}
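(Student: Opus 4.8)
The plan is to prove Proposition~\ref{prop:catloc2} by combining Proposition~\ref{prop:catloc1} with elementary facts about reflective subcategories. The equivalence splits into the two implications, and neither requires anything deep once we have Proposition~\ref{prop:catloc1} in hand.

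For the direction $(1)\Rightarrow(2)$: suppose $L\colon\cC\to\cC$ is a localization functor with essential image $\cE$. By Proposition~\ref{prop:catloc1} there is a factorization $L=GF$ with $F\colon\cC\to\cD$ and $G\colon\cD\to\cC$ a fully faithful right adjoint, $\eta$ the unit. Since $G$ is fully faithful, its essential image is equivalent to $\cD$, and that essential image is exactly the essential image of $L=GF$ (using that $F$ is essentially surjective, which follows from $G$ being fully faithful via the triangle identities — the counit $FG\to\id_\cD$ is an isomorphism). So the essential image of $L$ is $\cE$, and $G$ restricts to an equivalence $\cD\xrightarrow{\sim}\cE$ because $\cE$ is replete and full in $\cC$. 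Transporting the adjunction $(F,G)$ across this equivalence gives that the inclusion $\cE\hookrightarrow\cC$ is right adjoint to (the transported) $F$, i.e.\ admits a left adjoint. I would be slightly careful here to note that repleteness is exactly what lets us replace the abstract essential image by the literal subcategory $\cE$.

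For the direction $(2)\Rightarrow(1)$: suppose the inclusion $j\colon\cE\to\cC$ admits a left adjoint $F\colon\cC\to\cE$, with unit $\eta\colon\id_\cC\to jF$. Because $\cE$ is full, $j$ is fully faithful, so the counit $Fj\to\id_\cE$ is an isomorphism. Set $L:=jF\colon\cC\to\cC$. One checks directly from the triangle identities that $L\eta$ and $\eta_L$ are natural isomorphisms: indeed $\eta_L=\eta_{jF}$ and applying $j$ to the isomorphism $Fj\to\id_\cE$ composed with the triangle identity $(\epsilon j)\circ(j\eta)=\id_j$ forces $j\eta_F=j(\eta\text{ at }F)$ to be invertible, and symmetrically for the other. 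Hence by Proposition~\ref{prop:catloc1}, $L$ is a localization functor, and its essential image is the essential image of $j$, which is $\cE$ since $\cE$ is replete and full.

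I do not expect a genuine obstacle here — the statement is a standard repackaging. The one place to be most attentive is the bookkeeping around \emph{replete}: without repleteness, statement (1) would only pin down $\cE$ up to isomorphism-closure, so both implications secretly use it to identify ``essential image of $L$'' with the literal subcategory $\cE$. The other mild subtlety is making sure that when the adjoint is fully faithful the relevant unit or counit is an isomorphism (this is the standard characterization of (co)reflective subcategories), which is the glue connecting $F\dashv j$ to the hypotheses of Proposition~\ref{prop:catloc1}.
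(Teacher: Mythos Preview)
Your proof is correct. Note, however, that the paper does not actually prove this proposition: it is quoted as \cite[Proposition~2.6.1]{Kra10} and only followed by the remark that in case~(1) the left adjoint to $\cE\to\cC$ is $L$ with restricted codomain. So there is no proof in the paper to compare against directly.

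That said, the paper does carry out essentially your argument in the derivator setting immediately afterward (leading to Proposition~\ref{prop:locofders}). For $(2)\Rightarrow(1)$ it sets $L=GF$ with $G$ the inclusion and uses the triangle identities to show $L\eta=\eta_L$ is an isomodification, exactly as you do with $j$ and $F$; for $(1)\Rightarrow(2)$ it defines $F$ as $L$ with restricted codomain, takes $\eta$ as the unit, and defines the counit as the inverse of $\eta$ on objects of $\cE$, rather than passing through an auxiliary category $\cD$ as you do via Proposition~\ref{prop:catloc1}. Your route through Proposition~\ref{prop:catloc1} for the $(1)\Rightarrow(2)$ direction is slightly more indirect (you invoke an abstract $\cD$ and then transport along $G\colon\cD\xrightarrow{\sim}\cE$), whereas the paper's derivator argument works with $\cE$ directly, but the content is the same. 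Your attention to where repleteness is used is well placed and matches the paper's hypotheses.
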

In the case that we already have a localization functor, the left adjoint to $\cE\to\cC$ is given by $L$ (with restricted codomain).

We now want to generalize the two propositions above. Suppose that $\E\subset \D$ is a full subprederivator of a derivator $\D$ obtained by taking fully replete subcategories levelwise, and let $G\colon\E\to \D$ be the inclusion. If $G$ admits a left adjoint $F\colon\D\to\E$, then we can define $L=G F\colon\D\to\D$ and $\eta\colon \id_\D\to G F$ the unit of the adjunction. The essential image of $L$ is $\E$; for $Y\in \E(J)$, we have $L(G(Y))=GFG(Y)\cong G(Y)$ as the counit $\varepsilon\colon F G\to \id_\E$ of the $(F,G)$ adjunction is an isomodification.

Finally, using the triangle identities for the $(F,G)$ adjunction we have
\begin{equation}\label{eq:triloc}
L\eta=GF\eta=(G\varepsilon_F)\inv=\eta_{G F}=\eta_L.
\end{equation}
The natural transformation $(G\varepsilon_F)\inv$ only makes sense if $G\varepsilon_F$ is an isomodification, but this is clear as $\varepsilon$ is an isomodification. Therefore $L\eta=\eta_L$ is an isomodification and $L$ is a localization morphism.

Now, suppose we start with an endomorphism $L\colon\D\to\D$ of a derivator $\D$ with a modification $\eta\colon\id_\D\to L$ such that $\eta_L$ and $L\eta$ are isomodifications. Without loss of generality, we may assume $\eta_L=L\eta$, as the proof in the categorical case translates immediately to the derivator context.

Let $\cE_K\subset\D(K)$ denote the essential image of $L_K$ for every $K\in\Dia$. We claim these categories assemble into a prederivator. For $X\in\D(K)$ and $u\colon J\to K$, we have that $u^\ast L_K X\cong~L_J u^\ast X$ because $L$ is a morphism of derivators. If $Y\in \cE_K$, we have $u^\ast Y\cong u^\ast L_K X$ for some $X\in\D(K)$, hence $u^\ast Y\cong L_J u^\ast X$, so $u^\ast Y\in\cE_J$. Because the pullbacks $u^\ast$ restrict to these subcategories, we get a full subprederivator $\E\subset \D$.

Define a morphism $F\colon \D\to \E$ by $F_J(X)=L_J(X)$ and $\gamma_u^F=\gamma_u^L$ and $G\colon \E\to \D$ by the inclusion. Then the unit of the adjunction $\eta\colon \id_\D\to G F$ should be defined by the same $\eta$ as above. For the counit $\varepsilon\colon F G\to \id_\E$, we know that $\eta_Y$ is an isomorphism for any $Y\in\E$, so we let $\varepsilon$ be the inverse of $\eta$. For the triangle identities, we have
\begin{equation*}
F\eta=L\eta=\eta_L=(\varepsilon_L)\inv=(\varepsilon_F)\inv,
\end{equation*}
so $\varepsilon_F F\eta=\id_F$ as required. Similarly, as $G$ is just the inclusion,
\begin{equation*}
\eta_G=G\eta=(G\varepsilon)\inv,
\end{equation*}
so $G\varepsilon\eta_G=\id_G$. We obtain an adjunction, and thus have proved the following:

\begin{prop}\label{prop:locofders}
Let $\D$ be a derivator and $\E\subset\D$ a replete, full subprederivator (\ie levelwise replete, full subcategories). Then the following are equivalent:
\begin{enumerate}
\item There exists a morphism of derivators $L\colon\D\to\D$ with essential image $\E$ and a modification $\eta\colon\id_\D\to L$ such that $\eta_L$ and $L\eta$ are an isomodifications.
\item The levelwise inclusion morphism $\E\to \D$ admits a left adjoint.
\end{enumerate}
\end{prop}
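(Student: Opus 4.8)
The plan is to prove the two implications separately; both have, in effect, already been carried out in the discussion preceding the statement, so I would mainly organize that discussion into a clean proof and parallel the categorical Propositions~\ref{prop:catloc1} and~\ref{prop:catloc2}.

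For $(2)\Rightarrow(1)$: suppose the levelwise inclusion $G\colon\E\to\D$ has a left adjoint $F\colon\D\to\E$, with unit $\eta\colon\id_\D\to GF$ and counit $\varepsilon\colon FG\to\id_\E$. Set $L=GF$. Since $G$ is levelwise fully faithful, $\varepsilon$ is an isomodification, so for $Y\in\E$ we get $L(GY)=GFGY\cong GY$; as $\E$ is replete this shows the essential image of $L$ is exactly $\E$. The triangle identities give $L\eta=GF\eta=(G\varepsilon_F)\inv=\eta_{GF}=\eta_L$, and since $\varepsilon$ is an isomodification so is $G\varepsilon_F$, so $(G\varepsilon_F)\inv$ makes sense and $L\eta=\eta_L$ is an isomodification; in particular both $\eta_L$ and $L\eta$ are isomodifications, which is the data required in (1).

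For $(1)\Rightarrow(2)$: given $L$ and $\eta$ as in (1), I would first reduce to the case $\eta_L=L\eta$, just as in the categorical statement; the argument of Proposition~\ref{prop:catloc1} transports verbatim to modifications. Next, because $L$ is a morphism of derivators, $u^\ast L_K\cong L_J u^\ast$ for every $u\colon J\to K$, so the pullback functors restrict to the levelwise essential images $\cE_K=\E(K)$, giving the full subprederivator $\E\subset\D$. Define $F\colon\D\to\E$ by $F_J=L_J$ and $\gamma_u^F=\gamma_u^L$, let $G\colon\E\to\D$ be the inclusion, take $\eta$ as the unit, and for the counit $\varepsilon\colon FG\to\id_\E$ take the levelwise inverse of $\eta$ restricted to $\E$, which is legitimate since $\eta_Y$ is an isomorphism for every $Y\in\E$. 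The two triangle identities $\varepsilon_F\circ F\eta=\id_F$ and $G\varepsilon\circ\eta_G=\id_G$ then follow from $F\eta=L\eta=\eta_L=(\varepsilon_F)\inv$ and from $G$ being the inclusion, exactly as displayed above.

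The main obstacle I anticipate is not any single computation but the bookkeeping of checking everything at the level of modifications rather than objectwise: that $\varepsilon$ (resp.\ its levelwise inverse) is a genuine modification compatible with all the $\gamma_u$-squares, that $G\varepsilon_F$ is an isomodification so $(G\varepsilon_F)\inv$ is defined, that the identities $L\eta=GF\eta$ and $\eta_{GF}=\eta_L$ are identities of modifications and not merely of their components, and that the reduction to $\eta_L=L\eta$ respects the 2-categorical structure. Once one verifies that each coherence square matches, the argument is formal and mirrors the ordinary-categorical case.
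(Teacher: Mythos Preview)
Your proposal is correct and follows essentially the same approach as the paper: the paper's proof is exactly the discussion preceding the proposition, and your two implications, including the key identity $L\eta=GF\eta=(G\varepsilon_F)\inv=\eta_{GF}=\eta_L$ and the construction of $F$, $G$, $\eta$, $\varepsilon$ in the reverse direction, match it step for step.
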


The following general context will be found in the case of the localization of Theorem~\ref{thm:vanishingloc}. Suppose that $F\colon\D\to\D$ is a left adjoint endomorphism of a (pre)derivator~$\D$ such that its right adjoint $G$ is fully faithful on the (essential) image of $F$. Then the assignment $L=GF$ and $\eta\colon\id_\D\to GF$ the unit of the adjunction makes $L$ a localization morphism and the essential image of $F$ is a reflective subprederivator of~$\D$.

\section{Vanishing subderivators}

Not every derivator will be suitable for stabilization. We need two additional properties. We give the first one now and save the second for \S6.

\begin{defn}
A derivator $\D$ is \emph{pointed} if $\D(e)$ is pointed, that is, it contains a zero object.
\end{defn}
Let $K\in\Dia$ and $\pi\colon K\to e$ the projection. If $0\in\D(e)$ is a zero object, the object $\pi^\ast0\in\D(K)$ is easily shown to be a zero object. Therefore every level of a pointed derivator is a pointed category (as the name implies). Moreover, the functors $u_!,u^\ast,u_\ast$ are automatically pointed functors; see \cite[Proposition~3.2]{Gro13}. 

The relatively loose requirement that the base of a derivator admit a zero object implies some more interesting properties. We need a few definitions to describe these properties.

\begin{defn}
Let $u\colon J\to K$ be a fully faithful functor that is injective on objects.
\begin{enumerate}
\item The functor $u$ is a \emph{sieve} if for any morphism $k\to u(j)$ in $K$, $k$ lies in the image of $u$.
\item The functor $u$ is a \emph{cosieve} if for any morphism $u(j)\to k$ in $K$, $k$ lies in the image of $u$.
\end{enumerate}
\end{defn}
\begin{defn}\label{defn:extraadjoint}
\cite[p.6]{Mal07} A derivator $\D$ is \emph{strongly pointed} if for every sieve (resp. cosieve) $u\colon J\to K$ in $\Dia$, $u_\ast$ (resp. $u_!$) admits a right adjoint $u^!$ (resp. left adjoint~$u^?$).
\end{defn}

\begin{prop}\label{prop:stronglypointed}
\cite[Corollaries~3.5~and~3.8]{Gro13} A derivator is pointed if and only if it is strongly pointed.
\end{prop}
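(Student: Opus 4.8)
The plan is to prove the two implications separately; the forward one is the real content.

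\emph{Strongly pointed $\Rightarrow$ pointed.} Apply the defining property to the cosieve $v\colon\varnothing\to e$ (the empty functor is vacuously a fully faithful cosieve). By Der1, $\D(\varnothing)\simeq e$, and by Der3 the left Kan extension $v_!\colon\D(\varnothing)\to\D(e)$ exists; since $v_!$ is left adjoint to $v^\ast=\pi^\ast$, the object $v_!\star$ (for $\star$ the essentially unique object of $\D(\varnothing)$) satisfies $\Hom_{\D(e)}(v_!\star,X)\cong\Hom_{\D(\varnothing)}(\star,\pi^\ast X)$, a one-element set for every $X$, so $v_!\star$ is initial in $\D(e)$. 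By hypothesis $v_!$ also has a \emph{left} adjoint $v^?$, whence $\Hom_{\D(e)}(X,v_!\star)\cong\Hom_{\D(\varnothing)}(v^?X,\star)$ is likewise a one-element set; thus $v_!\star$ is terminal as well, \ie a zero object, and $\D$ is pointed.

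\emph{Pointed $\Rightarrow$ strongly pointed.} By passing to the opposite derivator $\D\op$, which is again pointed and which interchanges sieves with cosieves and statements about $u_\ast$ with statements about $u_!$, it suffices to show: for a cosieve $i\colon A\to B$ in $\Dia$, the morphism $i_!\colon\D(A)\to\D(B)$ admits a left adjoint $i^?$. Let $j\colon C\hookrightarrow B$ be the complementary sieve, the full subcategory on the objects of $B$ not in the image of $i$. The key step is to identify $i_!$ as an equivalence onto the base level $\D^{B,C}(e)=\{\,Y\in\D(B):j^\ast Y\cong 0\,\}$ of the vanishing subprederivator. First, $i$ fully faithful forces $i^\ast i_!\cong\id$: by the pointwise formula of Der4, $(i_!X)_{i(a)}$ is the homotopy colimit of $X$ over the slice $A_{/i(a)}$, which has a terminal object, so it recovers $X_a$; hence $i_!$ is fully faithful. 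Second, $j^\ast i_!\cong 0$: for $c\in C$, the same formula computes $(i_!X)_{j(c)}$ over the slice of pairs $(a,\,i(a)\to j(c))$, which is empty because such an arrow would violate the cosieve condition on $i$, so the value is the initial --- hence, by pointedness, the zero --- object. Conversely, if $j^\ast Y\cong 0$ then the counit $i_!i^\ast Y\to Y$ is an isomorphism: by Der2 one checks this levelwise, over the objects of $A$ using a triangle identity and $i^\ast i_!\cong\id$, and over the objects of $C$ because both sides are zero objects. Thus $i_!$ is an equivalence $\D(A)\xrightarrow{\ \sim\ }\D^{B,C}(e)$.

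It therefore suffices to equip the inclusion $\D^{B,C}(e)\hookrightarrow\D(B)$ with a \emph{left} adjoint (a reflection): composing a quasi-inverse $\D^{B,C}(e)\to\D(A)$ of $i_!$ after that reflection yields $i^?$, and the adjunction isomorphism $\Hom_{\D(A)}(i^?Y,W)\cong\Hom_{\D(B)}(Y,i_!W)$ follows formally, since $i_!W$ already lies in $\D^{B,C}(e)$. But the reflectivity of the vanishing subcategory $\D^{B,C}(e)$ in $\D(B)$ is exactly the base-level statement of the vanishing-subderivator localization; so one concludes by Theorem~\ref{thm:vanishingloc}, or directly by the argument of \cite[Corollaries~3.5 and 3.8]{Gro13}.

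The hard part is precisely this last input --- producing the reflection, equivalently the formula for $i^?$, in the absence of any strongness hypothesis. In a \emph{stable} derivator $i^?Y$ is simply the cofiber of the counit $j_!j^\ast Y\to Y$, but in a general pointed derivator that fiber/cofiber is not available from a bare morphism of $\D(e)$; the reflection must instead be manufactured by homotopy Kan extensions along an auxiliary enlargement of $B$ (a `mapping cylinder' on $C$), which is the genuinely diagrammatic step carried out in the proof of Theorem~\ref{thm:vanishingloc}.
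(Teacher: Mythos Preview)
The paper does not supply its own proof of this proposition; it simply records the statement with a citation to \cite[Corollaries~3.5~and~3.8]{Gro13}. So there is no ``paper's proof'' against which to compare yours, beyond the reference to Groth.

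Your easy direction (strongly pointed $\Rightarrow$ pointed, via the empty cosieve $\varnothing\to e$) is correct and is the standard argument.

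Your hard direction, however, contains a genuine circularity. You reduce the existence of $i^?$ for a cosieve $i\colon A\to B$ to the reflectivity of the vanishing subcategory $\D^{B,C}(e)\subset\D(B)$, and then invoke Theorem~\ref{thm:vanishingloc}. But in this paper Theorem~\ref{thm:vanishingloc} is proved \emph{after} Proposition~\ref{prop:stronglypointed}, and its proof uses the exceptional right adjoint $i_{[1]}^!$ (see the passage ``The right adjoint to $i_{[1],\ast}$ is $i_{[1]}^!$, an exceptional right adjoint''): that adjoint exists precisely by strong pointedness, which is what you are trying to establish. Concretely, the paper builds the localization $L$ on $\D^B$ without any exceptional adjoints, but verifies that $L$ \emph{is} a localization by analysing its right adjoint $R$, and that analysis is where $i_{[1]}^!$ enters. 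So appealing to Theorem~\ref{thm:vanishingloc} here assumes the conclusion.

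Your escape clause ``or directly by the argument of \cite[Corollaries~3.5 and 3.8]{Gro13}'' is not circular, but it is exactly what the paper does: cite Groth. Groth's construction of $i^?$ (and dually $i^!$) for a cosieve (sieve) is a direct three-step recipe via an auxiliary cylinder category, and \emph{that} is the actual content; it does not pass through a general vanishing-subderivator theorem. If you want a self-contained proof, you should reproduce that construction rather than point back to Theorem~\ref{thm:vanishingloc}.
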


There are two further propositions that bear mentioning here.
\begin{prop}\label{prop:fullyfaithfulkanextension}
\cite[Proposition~1.20]{Gro13} Let $\D$ be a derivator and let $u\colon J\to K$ be a fully faithful functor. Then $u_!\colon\D(J)\to\D(K)$ and $u_\ast\colon\D(J)\to\D(K)$ are fully faithful as well.
\end{prop}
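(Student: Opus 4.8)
The plan is to invoke the standard fact that, in an adjunction, the left adjoint is fully faithful if and only if the unit is invertible, and dually the right adjoint is fully faithful if and only if the counit is invertible. By (Der3), $u_!\colon\D(J)\to\D(K)$ is left adjoint to $u^\ast\colon\D(K)\to\D(J)$, and $u^\ast$ is left adjoint to $u_\ast\colon\D(J)\to\D(K)$; so it suffices to show that the unit $\eta\colon\id_{\D(J)}\to u^\ast u_!$ and the counit $\varepsilon\colon u^\ast u_\ast\to\id_{\D(J)}$ are isomorphisms. Both are transformations between endofunctors of $\D(J)$, so by (Der2) it is enough to check that $j^\ast\eta$ and $j^\ast\varepsilon$ are invertible for each object $j\in J$; here I also write $j\colon e\to J$ for the functor classifying $j$, and note $j^\ast u^\ast=(uj)^\ast$.

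The key geometric input is the shape of the slice categories in (Der4) when $u$ is fully faithful. Since $u$ is fully faithful, the comma category $J_{/uj}$ (with objects $(j',\,uj'\to uj)$) is isomorphic to the ordinary slice $J_{/j}$, so it has a terminal object $t$, namely $(j,\id_{uj})$; dually $J_{uj/}\cong J_{j/}$ has an initial object $s=(j,\id_{uj})$. In both cases the forgetful functor $\mathrm{pr}$ sends this object to $j$. Now whenever a small category $C$ has a terminal object $t$, the functor $t\colon e\to C$ is right adjoint to the projection $\pi\colon C\to e$; applying the strict $2$-functor $\D$ and using uniqueness of adjoints yields $\pi_!\cong t^\ast$, and dually an initial object $s$ gives $\pi_\ast\cong s^\ast$. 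Feeding this into the pointwise Kan-extension isomorphisms of (Der4) produces natural isomorphisms
$$
j^\ast u^\ast u_!=(uj)^\ast u_!\ \cong\ \pi_!\,\mathrm{pr}^\ast\ \cong\ t^\ast\mathrm{pr}^\ast=(\mathrm{pr}\,t)^\ast=j^\ast,
$$
and dually $(uj)^\ast u_\ast\cong\pi_\ast\,\mathrm{pr}^\ast\cong s^\ast\mathrm{pr}^\ast=j^\ast$.

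The remaining point --- and the one I expect to carry all the technical weight --- is to verify that the composite isomorphisms just constructed are genuinely inverse to $j^\ast\eta$ (respectively equal to $j^\ast\varepsilon$), not merely some abstract isomorphism; only then does fully faithfulness follow, since having $u^\ast u_!$ abstractly isomorphic to $\id_{\D(J)}$ is not by itself enough. This reduces to an unwinding of the mate $\alpha_!$ (resp.\ $\alpha_\ast$) that defines the isomorphism in (Der4), using that the $2$-cell in the defining square is the canonical one, together with the triangle identities for the adjunction $(u_!,u^\ast)$ (resp.\ $(u^\ast,u_\ast)$). The bookkeeping with the pasting diagrams is the main obstacle, but it is purely formal; and the argument for $u_\ast$ is exactly dual to the one for $u_!$, so it suffices to carry out the details in one case.
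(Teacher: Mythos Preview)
The paper does not give its own proof of this proposition; it simply cites \cite[Proposition~1.20]{Gro13}. Your proposal is correct and is essentially the standard argument found in that reference: reduce fully faithfulness to invertibility of the unit (resp.\ counit), check this pointwise via Der2, and use Der4 together with the fact that for fully faithful $u$ the slice categories $J_{/u(j)}$ and $J_{u(j)/}$ have terminal and initial objects respectively. Your identification of the one nontrivial step --- verifying that the composite isomorphism really is $j^\ast\eta$ (resp.\ $j^\ast\varepsilon$) rather than an unrelated isomorphism --- is exactly right, and indeed the mate calculus handles it formally.
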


\begin{prop}
\cite[Proposition~3.6]{Gro13} Let $\D$ be a pointed derivator.
\begin{enumerate}
\item If $u\colon J\to K$ is a cosieve, then for any $X\in \D(J)$, $k^\ast u_!X= 0$ for any $k\in K\setminus J$ and $j^\ast u_!X\cong j^\ast X$ for any $j\in J$.
\item If $v\colon J\to K$ is a sieve, then for any $Y\in \D(J)$, $k^\ast v_\ast Y= 0$ for any $k\in K\setminus J$ and $j^\ast v_\ast Y\cong j^\ast Y$ for any $j\in J$.
\end{enumerate}
\end{prop}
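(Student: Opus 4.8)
The plan is to handle, for each of (1) and (2), the isomorphism over objects of $J$ and the vanishing over objects of $K\setminus J$ by separate arguments, as they need different inputs. For the isomorphism clauses, I would first note that a sieve or cosieve is in particular a fully faithful functor, so Proposition~\ref{prop:fullyfaithfulkanextension} applies and $u_!$, $v_\ast$ are fully faithful. As usual, full faithfulness of the left adjoint $u_!$ forces the unit $\id_{\D(J)}\to u^\ast u_!$ of $(u_!,u^\ast)$ to be an isomorphism, and dually full faithfulness of the right adjoint $v_\ast$ forces the counit $v^\ast v_\ast\to\id_{\D(J)}$ to be an isomorphism. Since $u$ (resp.\ $v$) is injective on objects we may regard $j\in J$ as $u(j)\in K$ (resp.\ $v(j)\in K$); then $j^\ast u_!X=j^\ast u^\ast u_!X\cong j^\ast X$ via the unit, while $j^\ast v_\ast Y=j^\ast v^\ast v_\ast Y\cong j^\ast Y$ via the counit. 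This settles the second clause of both (1) and (2).

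For the vanishing, I would invoke Der4. Fix a cosieve $u\colon J\to K$, an $X\in\D(J)$, and $k\in K\setminus J$; Der4 gives a natural isomorphism $k^\ast u_!X\cong\pi_!\operatorname{pr}^\ast X$, where $\pi\colon J_{/k}\to e$ and $\operatorname{pr}\colon J_{/k}\to J$ are the projection and forgetful functors out of the slice $J_{/k}$, whose objects are pairs $(j,f\colon u(j)\to k)$. The crucial observation is that $J_{/k}$ is empty: any object $(j,f)$ would in particular be a morphism $u(j)\to k$ in $K$, and the cosieve condition would then force $k$ into the image of $u$, contradicting $k\notin J$. Hence $\operatorname{pr}^\ast X$ lies in $\D(\varnothing)$, which by Der1 is equivalent to $e$, so $\operatorname{pr}^\ast X$ is the unique, hence initial, object of $\D(\varnothing)$; since $\pi_!$ is a left adjoint it preserves initial objects, and in the pointed derivator $\D$ the initial object of $\D(e)$ is $0$. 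Therefore $k^\ast u_!X\cong 0$. The sieve case is exactly dual: for a sieve $v$ and $k\in K\setminus J$, the other half of Der4 gives $k^\ast v_\ast Y\cong\pi_\ast\operatorname{pr}^\ast Y$ over the coslice $J_{k/}$ of pairs $(j,f\colon k\to v(j))$, the sieve condition makes $J_{k/}$ empty, and $\pi_\ast$, being a right adjoint, carries the object of $\D(\varnothing)$ to the terminal object $0$ of $\D(e)$.

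I do not expect a genuine obstacle: with Der4 and pointedness in hand this is essentially bookkeeping. The two spots that want a little care are (a) applying the cosieve/sieve definition in precisely the right direction to conclude that the relevant comma category is empty, and (b) being explicit that the isomorphism over $j\in J$ is the one induced by the (co)unit of the relevant adjunction. One could instead prove the isomorphism clause directly from Der4 — the slice $J_{/u(j)}$ is isomorphic, by full faithfulness of $u$, to $J_{/j}$, which has terminal object $(j,\id)$, so $\pi_!$ over it is evaluation there and returns $j^\ast X$ — but routing through Proposition~\ref{prop:fullyfaithfulkanextension} is shorter.
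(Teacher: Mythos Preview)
Your argument is correct. The paper does not actually supply its own proof of this proposition; it simply cites \cite[Proposition~3.6]{Gro13} and moves on, so there is nothing in-paper to compare against. What you wrote is the standard argument (and essentially Groth's): full faithfulness of $u_!$ (resp.\ $v_\ast$) via Proposition~\ref{prop:fullyfaithfulkanextension} makes the unit (resp.\ counit) an isomorphism, handling points of $J$; and for $k\in K\setminus J$, Der4 reduces to a Kan extension out of an empty comma category, which by Der1 and pointedness yields $0$.
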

We colloquially call these `extension by zero' morphisms. For a cosieve $u\colon J\to K$ and $X\in\D(J)$, the underlying diagram of $u_!X$ is isomorphic to $X$ on $J\subset K$ and isomorphic to 0 on the complement. These results are particularly helpful when we attempt to describe with diagrams the action of Kan extensions along sieves and cosieves.

As one final note, suppose that $\D$ is a (strongly) pointed derivator and $u\colon J\to K$ is a sieve. Then the extension by zero morphism $u_\ast\colon\D(J)\to\D(K)$ admits a right adjoint by Proposition~\ref{prop:stronglypointed}, which means that $u_\ast\colon\D^J\to\D^K$ is actually a cocontinuous morphism of derivators. We will use this observation in \S5. We can now begin the discussion of vanishing subderivators in general. We start with a definition.

\begin{defn}
Let $\D$ be a pointed derivator, let $B\in\Dia$, and let $i\colon A\to B$ be a full subcategory. Define $\D(B,A)$ to be the full subcategory of $\D(B)$ given by $\D(B,A)=\{X\in\D(B):i^\ast X=0\in\D(A)\}$.
\end{defn}

\begin{lemma}\label{lemma:vanishingprederivator}
As above, $\D(B,A)$ is the base of a prederivator.
\end{lemma}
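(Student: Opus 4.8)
The plan is to show that the assignment $K \mapsto \D^{B,A}(K) := \D(K\times B, K\times A)$ (where $K\times A$ is viewed as a full subcategory of $K\times B$ via $\id_K\times i$) is closed under the structure maps of the shifted prederivator $\D^B$, and hence inherits a prederivator structure by restriction. Concretely, for each $K\in\Dia$ we set $\D^{B,A}(K)$ to be the full subcategory of $\D(K\times B)$ consisting of those $X$ with $(\id_K\times i)^\ast X = 0$ in $\D(K\times A)$, and we must check that for every functor $u\colon J\to K$ the pullback $(u\times\id_B)^\ast\colon\D(K\times B)\to\D(J\times B)$ carries $\D^{B,A}(K)$ into $\D^{B,A}(J)$, and similarly that natural transformations are handled automatically.

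First I would fix notation: write $i_K = \id_K\times i\colon K\times A\to K\times B$ for the full subcategory inclusion at level $K$, so that $\D^{B,A}(K) = \{X\in\D(K\times B) : i_K^\ast X = 0\}$. The key step is a naturality square: for $u\colon J\to K$, the diagram of categories
\begin{equation*}
\xymatrix{
J\times A \ar[r]^{i_J}\ar[d]_{u\times\id_A} & J\times B\ar[d]^{u\times\id_B}\\
K\times A\ar[r]_{i_K} & K\times B
}
\end{equation*}
commutes on the nose, so applying $\D$ gives $i_J^\ast\circ(u\times\id_B)^\ast = (u\times\id_A)^\ast\circ i_K^\ast$ as functors $\D(K\times B)\to\D(J\times A)$. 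Hence if $X\in\D^{B,A}(K)$, i.e.\ $i_K^\ast X = 0$, then $i_J^\ast(u\times\id_B)^\ast X = (u\times\id_A)^\ast i_K^\ast X = (u\times\id_A)^\ast 0 \cong 0$, using that pullback functors in a pointed derivator preserve the zero object (Proposition~\ref{prop:stronglypointed} and the remarks following the definition of pointed derivator). Therefore $(u\times\id_B)^\ast X\in\D^{B,A}(J)$, so the structure functors of $\D^B$ restrict. For a natural transformation $\alpha\colon u\to v$ in $\Cat$, the induced $\alpha^\ast\colon(u\times\id_B)^\ast\to(v\times\id_B)^\ast$ restricts automatically since its source and target functors have already been shown to land in the full subcategories $\D^{B,A}(J)\subset\D^B(J)$; fullness of the inclusion means no further coherence check is needed, and strict 2-functoriality is inherited verbatim from $\D^B$.

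Assembling these observations, the restricted data $(K\mapsto\D^{B,A}(K),\ u\mapsto (u\times\id_B)^\ast|,\ \alpha\mapsto\alpha^\ast|)$ form a strict 2-functor $\Dia\op\to\CAT$, i.e.\ a prederivator, whose base is $\D^{B,A}(e) = \D(B,A)$ as desired; the levelwise inclusions assemble into a morphism of prederivators $\D^{B,A}\to\D^B$ that is by construction levelwise fully faithful. I do not anticipate a serious obstacle here: the only substantive input is that pullbacks preserve zero objects in a pointed derivator, which is cited above, and everything else is the bookkeeping of restricting a strict 2-functor to full subcategories that are demonstrably closed under the structure maps. (The genuine work — producing the left adjoint to this inclusion so that $\D^{B,A}$ is an honest pointed derivator — is deferred to Theorem~\ref{thm:vanishingloc}, and is not part of this lemma.)
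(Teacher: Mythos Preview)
Your proof is correct and follows essentially the same approach as the paper: both use the commutative square relating $i$ and $u$ to show that the pullback functors of $\D^B$ restrict to the vanishing subcategories, invoking that restriction functors are pointed. You add a sentence about natural transformations and the inherited strict 2-functoriality, which the paper leaves implicit, but the argument is otherwise identical.
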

\begin{proof}
We let $\D^{B,A}$ denote the purported prederivator. We define $\D^{B,A}(K)$ to be the subcategory of $X\in\D(B\times K)$ such that $(i\times\id_K)^\ast X=0\in\D(A\times K)$. Let $u\colon J\to K$ be any morphism in $\Dia$. Then we have the following commutative square:
\begin{equation}\label{dia:vanishingprederivator}
\vcenter{\xymatrix@C=3em{
A\times J\ar[r]^-{\id_A\times u}\ar[d]_-{i\times\id_J}&A\times K\ar[d]^-{i\times\id_K}\\
B\times J\ar[r]_-{\id_B\times u}&B\times K
}}
\end{equation}
Applying $\D$ to this square, we obtain an equality
\begin{equation*}
(i\times\id_J)^\ast(\id_B\times u)^\ast = (\id_A\times u)^\ast(i\times\id_K)^\ast
\end{equation*}
by strict 2-functoriality. Therefore consider some $X\in\D^{B,A}(K)$. By definition, we have $(i\times\id_K)^\ast X=0$, so $(\id_A\times u)^\ast(i\times\id_K)^\ast X=0$ as well (as restriction functors are pointed). By the above equality, this shows that the object $Y=(\id_B\times u)^\ast X$, which a priori is an element of $\D(B\times J)$ but not necessarily $\D^{B,A}(J)$, satisfies \linebreak$(i\times\id_J)^\ast Y=0$, so it is indeed in the vanishing subcategory.
\end{proof}

The prederivator $\D^{B,A}$ is easy to understand in the case that $i\colon A\to B$ is a sieve (resp. cosieve). The complementary inclusion $j\colon B\setminus A\to A$ is a cosieve (resp. sieve). We can then identify $\D^{B,A}$ with the essential image of $j_!\colon \D^{B\setminus A}\to\D^B$ (resp. $j_\ast$). These morphisms are (levelwise) fully faithful, so $j_!\colon\D^{B\setminus A}\to\D^{B,A}$ is an equivalence of derivators, and thus $\D^{B,A}$ shares all nice properties of $\D$. In the case that $i\colon A\to B$ is not a (co)sieve, the result is identical but the proof significantly harder. The following theorem is \cite[Proposition~7.4]{Hel97}, which was given without proof.

\begin{theorem}\label{thm:vanishingloc}
Let $\D$ be a pointed derivator and $i\colon A\to B$ be a full subcategory in $\Dia$. Then there exists a localization of $\D^B$ with essential image $\D^{B,A}$.
\end{theorem}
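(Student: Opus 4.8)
The plan is to construct the left adjoint to the inclusion $\D^{B,A}\to\D^B$ explicitly using Kan extensions, and then invoke Proposition~\ref{prop:locofders} (or the ``general context'' paragraph immediately following it) to conclude that the essential image is a localization. The key idea is that ``making $X$ vanish on $A$'' should be achieved by a two-step process: first kill the restriction $i^\ast X$ by forming a cofiber-type construction, then glue back in. Concretely, consider the cylinder-type category $B' = B\sqcup_A (A\times[1])$ obtained by gluing a copy of $A\times[1]$ to $B$ along $A\times\{0\}\cong A$, with $A\times\{1\}$ remaining free. There are inclusions $A\hookrightarrow B'$ (as $A\times\{1\}$), and the original $B\hookrightarrow B'$. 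The composite of restricting $\D^{B'}\to\D^B$-adjoint constructions should implement the localization. Actually, I would follow Heller's sketch more directly: the functor $L_K\colon\D(B\times K)\to\D(B\times K)$ should be built as $\mathrm{pr}_! e_\ast$ or a similar composite where one first right Kan extends along a sieve-like inclusion that adjoins a zero object over $A$, then left Kan extends back, arranging that the net effect on objects already vanishing over $A$ is the identity.

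More precisely, here is the construction I would carry out. Let $A'$ be the cone on $A$, i.e.\ $A' = A\star e$ with a new terminal object $\infty$, so that $A\hookrightarrow A'$ is a sieve. Form the pushout category $\overline B = B\sqcup_A A'$, which adjoins a single new object $\infty$ to $B$ receiving a map from every object of $A$ (via their images in $B$). Then $B\hookrightarrow\overline B$ is a cosieve (everything maps \emph{into} $\infty$, nothing out), and there is also the inclusion $A\hookrightarrow\overline B$. The first step is: right Kan extend $s\colon\D^B\to\D^{\overline B}$ along the cosieve $B\hookrightarrow\overline B$; by the ``extension by zero'' / Kan extension formulas, $(s X)_\infty$ computes a limit of $X$ over $A$ (more precisely over the relevant slice), and $sX$ restricted to $B$ is $X$. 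Then form the subcategory $\overline B \setminus A$ — here we must remove $A$ but keep $\infty$ — no: rather, the second step is to left Kan extend along the inclusion $(\overline B\setminus A)\hookrightarrow\overline B$ of the complement of the \emph{image of $A$ in $B$}, which is a sieve-type inclusion relative to the cone structure, designed so that over $A$ we now get $\mathrm{colim}$ of a cone diagram, which vanishes. Composing, $L = (\text{restrict to }B)\circ(\text{LKE})\circ(\text{RKE})\circ s$, and one checks $i^\ast L X = 0$ for all $X$, and $L X\cong X$ whenever $i^\ast X = 0$ already.

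The key steps, in order: (1) define the auxiliary categories and the inclusions, verifying which are sieves and cosieves; (2) define $F = L$ (with restricted codomain $\D^{B,A}$) as the composite of the relevant $(\cdot)_!$ and $(\cdot)_\ast$ morphisms of shifted derivators, which are automatically cocontinuous/continuous morphisms of derivators, so $F$ is a morphism of prederivators between the shifted derivators; (3) use the pointwise Kan extension formulas (Der4) together with the ``extension by zero'' Proposition and the fact that $\D$ is pointed (hence strongly pointed, Proposition~\ref{prop:stronglypointed}) to compute $i^\ast F X$ at each object of $A$ and show it is $0$ — this is the crux; (4) construct the unit $\eta\colon\id_{\D^B}\to G F$ from the Kan extension (co)units and show it is an isomorphism precisely on $\D^{B,A}$, i.e.\ that $G$ is fully faithful on the image of $F$; (5) invoke the ``general context'' paragraph after Proposition~\ref{prop:locofders}: since $F$ is a left adjoint endomorphism (being cocontinuous with a continuous partner, by \cite[Proposition~2.9]{Gro13}) whose right adjoint is fully faithful on the image of $F$, the pair $(L = GF, \eta)$ is a localization morphism with essential image $\D^{B,A}$. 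Then Lemma~\ref{lemma:localisederivator} gives that $\D^{B,A}$ is again a (pointed) derivator.

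The main obstacle I expect is step (3): getting the auxiliary category $\overline B$ (or whatever gluing is used) exactly right so that the iterated Kan extension genuinely vanishes on \emph{all} of $A$ — not just on objects of $A$ that happen to be sinks or sources in $B$ — and simultaneously is the identity on objects already vanishing on $A$. The naive single cone $A\star e$ only handles ``constant over $A$'' behavior; for a general full subcategory $i\colon A\to B$ one likely needs a more elaborate cylinder $B\sqcup_A \mathrm{Cyl}(A\to e)$ or the mapping-cone-type category, and the slice-category computations in Der4 at the glued objects become delicate because the slices $(\overline B\setminus A)_{/a}$ for $a\in A$ must be shown to have the contractible/zero-limit-producing shape. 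A secondary subtlety is handling the partial shifts: everything must be done not just for $\D^B\to\D^{B,A}$ at the base, but compatibly for $\D^{B\times K}$, which is why working throughout with morphisms of shifted derivators $\D^{(-)}$ (rather than levelwise functors) is essential, since the Kan extensions along $u\times\id_K$ are computed by the $K$-shifted derivator and the formulas are stable under this shifting.
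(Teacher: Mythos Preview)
Your high-level strategy matches the paper's: construct a left adjoint endomorphism $L\colon\D^B\to\D^B$ with essential image $\D^{B,A}$, show its right adjoint is fully faithful on that image, and invoke the paragraph after Proposition~\ref{prop:locofders}. That scaffolding is correct.

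The gap is in the actual construction of $L$, where your sketch contains errors that do not cancel. In your cone category $\overline B = B\sqcup_A (A\star e)$, the new object $\infty$ receives maps from $A\subset B$, so $B\hookrightarrow\overline B$ is a \emph{sieve}, not a cosieve as you claim. Right Kan extension along a sieve is extension by zero, so $(sX)_\infty = 0$, not the limit over $A$ you assert. Worse, in your second step ``left Kan extend along $(\overline B\setminus A)\hookrightarrow\overline B$'': at a point $a\in A$ this computes $\colim_{(\overline B\setminus A)_{/a}}$, and since $\infty$ maps to nothing, this slice is just $(B\setminus A)_{/a}$ --- the new point $\infty$ plays no role whatsoever, and there is no reason for this colimit to vanish. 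So the construction as written does not produce an object of $\D^{B,A}$. You correctly anticipated step~(3) as the obstacle, but the obstacle is fatal for this particular $\overline B$.

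The paper's $L$ is exactly what Heller suggested: the cofiber of the counit $i_!i^\ast X\to X$. The work is in lifting this counit to a \emph{coherent} arrow, i.e.\ producing a morphism $\D^B\to\D^{B\times[1]}$ whose underlying diagram is $\varepsilon_X$. This is done via the mapping cylinder $\Cyl(i)$ --- essentially your abandoned first idea $B' = B\sqcup_A(A\times[1])$ --- together with a careful calculus-of-mates argument identifying the resulting coherent arrow with $\varepsilon$. One then composes with the standard cone morphism $C\colon\D^{B\times[1]}\to\D^B$. Checking $i^\ast L(X)=0$ reduces to $i^\ast\varepsilon_X$ being an isomorphism (since $i_!$ is fully faithful), and checking $L(Y)\cong Y$ for $Y\in\D^{B,A}$ is immediate since $i_!i^\ast Y=0$. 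The harder verification, that the right adjoint $R$ is fully faithful on $\D^{B,A}$, requires computing the exceptional right adjoint $i_{[1]}^!$ explicitly and showing $R|_{\D^{B,A}}\cong\id$; this uses pointedness via Proposition~\ref{prop:stronglypointed} in an essential way, which your sketch did not reach.
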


Heller claims that the localization morphism is given by the cofiber of the counit of the $(i_!,i^\ast)$ adjunction. We shall see that this is the case, under a sufficiently sophisticated interpretation of the claim. In the case of usual homotopy theory, the construction of cofibers is not functorial. This is one of the main weaknesses of triangulated categories, but one of the strengths of derivators is that we can obtain functorial -- and even coherent, as we will shortly show -- cofibers.

\begin{proof}
Our overall goal is the following: there is an adjunction $(L,R)\colon\D^B\to\D^B$ such that $\D^{B,A}$ is the essential image of $L$ and $R$ is fully faithful on $\D^{B,A}$. As we noted after Proposition~\ref{prop:locofders}, this gives us a localization $RL\colon\D^B\to\D^B$.

\begin{cons}\label{cons:coherentcounit}
For any functor $u\colon J\to K$, there is a morphism of derivators\linebreak $\D^J\to\D^{J\times [1]}$ given by
\begin{equation*}
X\mapsto (u_!u^\ast X\to X).
\end{equation*}
That is, the counit of the $(u_!,u^\ast)$ adjunction may be constructed coherently.
\end{cons}
The idea for this construction originates from personal correspondence with Kevin Carlson and the proof is joint with Ioannis Lagkas.

Consider the category $\Cyl(u)$ constructed as follows: its objects are the disjoint union of $J$ and $K$, and its morphisms are defined as follows:
\begin{equation*}
\Hom_{\Cyl(u)}(x,y)=\begin{cases}\Hom_J(x,y)&x,y\in J\\\Hom_K(x,y)&x,y\in K\\\ast&x\in J,y\in K\text{ and }u(x)=y\\\varnothing&\text{otherwise}\end{cases}
\end{equation*}
In other words, the category is $J$ glued to $K$ along the map $u$, so we will refer to this as a mapping cylinder. We will abuse notation and refer to $J,K\subset\Cyl(u)$ as full subcategories. Let $\overline u\colon \Cyl(u)\to K\times[1]$ be defined by
\begin{equation*}
\overline u(x)=\begin{cases} (u(x),0)&x\in J\\ (x,1)&x\in K\end{cases}
\end{equation*}
with the action of $\overline u$ on the maps in $J,K$ obvious and sending the unique `gluing' map $j\to k$ in $\Cyl(u)$ to the vertical map $(u(j),0)\to(k,1)$ as $u(j)=k$. Let $p\colon K\times[1]\to K$ be the projection.

We claim that $\overline {u}_!\overline {u}^\ast p^\ast\colon \D^K\to\D^{K\times[1]}$ gives coherently the counit of the $(u_!,u^\ast)$ adjunction. To verify this, we use the following diagram:
\begin{equation}\label{dia:mates}
\vcenter{\xymatrix@C=3em{
J\ar[r]^-{i_0}\ar[d]_-{u}&\Cyl(u)\ar[r]^-{q}\ar[d]^-{\overline u}\ar@{}[dl]|\swtrans\ar@{}[dl]<-1.2ex>|\alpha&K\ar[dd]^-{\id_K}\ar@{}[ddl]|\swtrans\ar@{}[ddl]<-1.2ex>|\gamma\\
K\ar[r]^-s\ar[d]_-{\id_K}&K\times[1]\ar@{}[dl]|\swtrans\ar@{}[dl]<-1.2ex>|\beta\ar[d]^-{\id_{K\times[1]}}\\
K\ar[r]_-t&K\times[1]\ar[r]_-p&K
}}
\end{equation}
In the above, we write $q=p\overline u$; $s,t\colon K\to K\times[1]$ are the inclusion into the source $K\times\{0\}$ and target $K\times\{1\}$ of the coherent arrow, respectively; and $i_0\colon J\to \Cyl(u)$ is the inclusion of the full subcategory. The transformation $\alpha$ is the identity, $\beta$ is the transformation $s\Rightarrow t$ as in the underlying diagram functor $\dia_{[1],K}$, and $\gamma$ is also the identity.

We will use more heavily here the machinery of the calculus of mates, and so add more details to the construction addressed in Der4. Following the notation of \cite[Definition~1.15]{Gro13}, given a commutative square in $\Dia$, we get a commutative square in $\CAT$ after applying a derivator $\D$:
\begin{equation}\label{dia:exactsquare}
\vcenter{\xymatrix{
J_1\ar[r]^-v\ar[d]_-{u_1}&J_2\ar[d]^-{u_2}\ar@{}[dl]|\swtrans\ar@{}[dl]<-1.2ex>|\delta\\
K_1\ar[r]_-{w}&K_2
}}\mapsto
\vcenter{\xymatrix{
\D(J_1)&\D(J_2)\ar[l]_-{v^\ast}\ar@{}[dl]|\swtrans\ar@{}[dl]<-1.2ex>|{\delta^\ast}\\
\D(K_1)\ar[u]^-{u_1^\ast}&\D(K_2)\ar[l]^-{w^\ast}\ar[u]_-{u_2^\ast}
}}
\end{equation}
By Der3, the functors $u_1^\ast,u_2^\ast$ admit left adjoints, which along with the counit of $(u_{1,!},u_1^\ast)$ and the unit of $(u_{2,!},u_2^\ast)$ gives us an augmented diagram:
\begin{equation*}
\vcenter{\xymatrix{
\D(K_1)&\D(J_1)\ar[l]_-{u_{1,!}}\ar@{}[dl]|\swtrans&\D(J_2)\ar[l]_-{v^\ast}\ar@{}[dl]|\swtrans\ar@{}[dl]<-1.2ex>|{\delta^\ast}&\ar@{}[dl]|\swtrans\\
&\D(K_1)\ar@(l,d)[ul]^-{=}\ar[u]^-{u_1^\ast}&\D(K_2)\ar[l]^-{w^\ast}\ar[u]_-{u_2^\ast}&\D(J_2)\ar[l]^-{u_{2,!}}\ar@(u,r)[lu]_-{=}
}}
\end{equation*}
In total, we obtain a transformation $\delta_!\colon u_{2,!}v^\ast\to w^\ast u_{1,!}$ which we call the \emph{left mate} of $\delta$. Taking mates is compatible with pasting as well; see \cite[Lemma~1.14]{Gro13}.

The transformation we must study is $\beta^\ast=\D(\beta)$ whiskered with some functors, namely
\begin{equation*}
\beta_{!,\overline i_!q^\ast}\colon \id_{B,!}s^\ast\overline i_!q^\ast\to t^\ast \id_{B\times[1],!}\overline i_!q^\ast,
\end{equation*}
which gives us the coherent map represented by $\overline i_!\overline i^\ast p^\ast X$ for $X\in\D^B$. We provide the functors $\id_!$ to make more clear the calculus of mates, though they are isomorphisms.

The map we must study is, for $X\in\D^K$, 
\begin{equation*}
\dia_{[1],K} (\overline u_! \overline u^\ast p^\ast X)\colon s^\ast \overline {u}_! \overline {u}^\ast p^\ast X\to t^\ast \overline {u}_! \overline {u}^\ast p^\ast X
\end{equation*}
which is encoded by the natural tranformation $\beta$. To be more specific, the left mate of $\beta^\ast$ is the natural transformation
\begin{equation*}
\beta_!\colon \id_{K,!}s^\ast\to t^\ast \id_{K\times[1],!}.
\end{equation*}
Since the left Kan extension along an identity is an isomorphism, in essence we have a transformation $\beta_!\colon s^\ast\Rightarrow t^\ast$. We then need to whisker this transformation with $\overline{u}_! q^\ast=\overline{u}_!\overline{u}^\ast p^\ast$ to obtain the desired map. We pick the left mate (rather than using $\beta^\ast$ itself) because we need to whisker with a left Kan extension.

We would like to understand how the transformation $\beta_{!,\overline u_! q^\ast}$ fits into the left mate of total pasting of the diagram. To that end, we take the left mates of $\alpha,\beta,\gamma$ and paste them as follows:
\begin{equation*}
\vcenter{\xymatrix@C=3em{
\D(J)\ar@{<-}[r]^-{i_0^\ast}\ar[d]_-{u_!}&\D(\Cyl(u))\ar@{<-}[r]^-{q^\ast}\ar[d]^-{\overline u_!}\ar@{}[dl]|\swtrans\ar@{}[dl]<-1.2ex>|{\alpha_!}&\D(K)\ar[dd]^-{\id_{K,!}}\ar@{}[ddl]|\swtrans\ar@{}[ddl]<-1.2ex>|{\gamma_!}\\
\D(K)\ar@{<-}[r]^-{s^\ast}\ar[d]_-{\id_{K,!}}&\D(K\times[1])\ar@{}[dl]|\swtrans\ar@{}[dl]<-1.2ex>|{\beta_!}\ar[d]^-{\id_{K\times[1],!}}\\
\D(K)\ar@{<-}[r]_-{t^\ast}&\D(K\times[1])\ar@{<-}[r]_-{p^\ast}&\D(K)
}}
\end{equation*}
Working through these left mates one at a time, we obtain the composite transformation
\begin{equation*}
\vcenter{\xymatrix@C=4em{
\id_{K,!}u_!i_0^\ast q^\ast\ar[r]^-{\id_{K,!}\alpha_{!,p^\ast}}&\id_{K,!}s^\ast\overline u_! p^\ast \ar[r]^-{\beta_{!,\overline u_!q^\ast}}& t^\ast \id_{K\times[1],!}\overline u_!q^\ast\ar[r]^-{t^\ast\gamma_!}&t^\ast p^\ast \id_{K,!}\!.
}}
\end{equation*}

The total pasting of the diagram is
\begin{equation*}
\vcenter{\xymatrix{
J\ar[r]^-u\ar[d]_-u&K\ar[d]^-{\id_K}\ar@{}[dl]|\swtrans\ar@{}[dl]<-1.25ex>|\id\\
K\ar[r]_-{\id_K}&K
}}
\end{equation*}
and its left mate is $\varepsilon\colon u_!u^\ast\to\id_K^\ast\id_{K,!}\cong\id_{\D^K}$, \ie the counit of the $(u_!,u^\ast)$ adjunction. Thus the above composite is the counit of the adjunction by\linebreak \cite[Lemma~1.14(1)]{Gro13}. Therefore it suffices to show that $\alpha_{!,p^\ast}$ and $t^\ast \gamma_!$ are natural isomorphisms, thereby obtaining
\begin{equation}\label{dia:mategoal}
\vcenter{\xymatrix@C=2em{
\id_{K,!}i_!i_0^\ast q^\ast\ar[d]_-{\id_{K,!}\alpha_{!,p^\ast}}^-\cong\ar[r]^-\cong&u_!u^\ast\ar[r]^-\varepsilon&\id_{\D^K}\ar[r]^-\cong&t^\ast p^\ast \id_{K,!}\!\\
\id_{K,!}s^\ast\overline u_! p^\ast \ar[rrr]_-{\beta_{!,\overline u_!q^\ast}}&&& t^\ast \id_{K\times[1],!}\overline u_!q^\ast\ar[u]_-{t^\ast\gamma_!}\ar[u]^-\cong
}}
\end{equation}
so that $\beta_{!,\overline u_!q^\ast}$ is (up to isomorphism) the counit of the adjunction.

We begin with $\alpha_!$. We claim that $\alpha_!$ is actually a natural isomorphism even without whiskering with $p^\ast$. We use~\cite[Proposition~1.24]{Gro13}: if that square is a (1-categorical) pullback and the bottom horizontal functor is a Grothendieck fibration or the right vertical functor is a Grothendieck opfibration, then $\alpha_!$ is an isomorphism. For us, the square is a pullback, which requires the transformation $\alpha$ to be the identity (and so it is). Moreover, $s$ is a sieve, which in particular is a discrete Grothendieck fibration. This proves that $\alpha_!$, hence $\alpha_{!,p^\ast}$, is an isomorphism.

For $\gamma_!$, it is not true that this transformation is an isomorphism in general, but it is after applying $t^\ast$. To prove this, we give another pasting
\begin{equation}\label{dia:pasting2}
\vcenter{\xymatrix{
K\ar[r]^-{i_1}\ar[d]_-{\id_K}&\Cyl(u)\ar[r]^-q\ar[d]_-{\overline u}\ar@{}[dl]|\swtrans\ar@{}[dl]<-1.25ex>|\theta&K\ar[d]^-{\id_K}\ar@{}[dl]|\swtrans\ar@{}[dl]<-1.25ex>|\gamma\\
K\ar[r]_-t&K\times[1]\ar[r]_-p&K
}}
\end{equation}
where $i_1\colon K\to \Cyl(u)$ is the inclusion into the bottom of the mapping cylinder and $\theta$ is just the identity. Taking left mates and composing, we obtain
\begin{equation*}
\vcenter{\xymatrix{
\id_{K,!}i_1^\ast q^\ast\ar[r]^-{\theta_{!,q^\ast}}&t^\ast \overline u_! q^\ast\ar[r]^-{t^\ast\gamma_!}&t^\ast p^\ast \id_{K,!}\!.
}}
\end{equation*}
But the total pasting is just
\begin{equation*}
\vcenter{\xymatrix{
B\ar[r]^-{\id_B}\ar[d]_-{\id_B}&B\ar[d]^-{\id_B}\ar@{}[dl]|\swtrans\ar@{}[dl]<-1.25ex>|\id\\
B\ar[r]_-{\id_B}&B
}}
\end{equation*}
whose left mate is clearly an isomorphism. Therefore we just need to show that $\theta_{!,q^\ast}$ is an isomorphism to complete this argument.

The functor $\overline u$ is a cosieve, and a fortiori a discrete Grothendieck opfibration. Moreover, the lefthand square in Diagram~\ref{dia:pasting2} is clearly a pullback square, and has the identity natural transformation, hence by~\cite[Proposition~1.24]{Gro13} again the transformation $\theta_!$ is an isomorphism, even without whiskering with $q^\ast$. We thus obtain
\begin{equation*}
\vcenter{\xymatrix{
\id_{K,!}i_1^\ast q^\ast\ar[r]^-{\theta_{!,q^\ast}}_-\cong\ar@(d,d)[rr]_-\cong&t^\ast \overline u_! q^\ast\ar[r]^-{t^\ast\gamma_!}&t^\ast p^\ast \id_{K,!},
}}
\end{equation*}
proving that $t^\ast\gamma_!$ is an isomorphism. This proves that Diagram~\ref{dia:mategoal} has isomorphisms where claimed and so the functor $\overline {u}_!\overline {u}^\ast p^\ast\colon\D^K\to\D^{K\times[1]}$ is a coherent counit morphism
\begin{equation*}
X\mapsto (u_!u^\ast X\to X).
\end{equation*}

We will use this construction for the inclusion $i\colon A\to B$. Having constructed the counit coherently, we now take the cone of the counit via a canonical morphism $C\colon \D^{[1]}\to \D$. In brief, for $(f\colon x\to y)$ in $\D^{[1]}$, $C(f)$ is computed as the pushout along zero
\begin{equation*}
\vcenter{\xymatrix{
x\ar[r]^-f\ar[d]&y\ar[d]\\
0\ar[r]&C(f)
}}
\end{equation*}
We will give the details of this construction for the second half of this proof.

In total this gives us a morphism
\begin{equation*}
\xymatrix@C=3em{
L\colon\D^{B}\ar[r]^-{\overline {i}_!\overline {i}^\ast p^\ast}&\D^{B\times [1]}\ar[r]^-{C} & \D^B.
}
\end{equation*}

We now need to show that the image of $L$ is contained in $\D^{B,A}$. To that end, we consider $Z=i^\ast L(X)\in \D^A$ for any $X\in\D^B$. Because $i^\ast$ is cocontinuous, it commutes with $C$, so we can compute $Z$ (up to canonical isomorphism) as the pushout
\begin{equation*}
\xymatrix{
i^\ast i_!i^\ast X\ar[r]^-{i^\ast\varepsilon_X}\ar[d]&i^\ast X\ar[d]\\
0\ar[r]& Z
}
\end{equation*}
We claim this top map is an isomorphism. Because $i_!\colon\D^A\to\D^B$ is fully faithful by Proposition~\ref{prop:fullyfaithfulkanextension}, the unit $\eta\colon \id_{\D^A}\to i^\ast i_!$ is an isomorphism, and one of the triangle identities gives us
\begin{equation*}
\xymatrix{
i^\ast X\ar[r]^-{\eta_{i^\ast \!X}}_-\cong\ar@(d,l)[dr]_= & i^\ast i_! i^\ast X\ar[d]^{i^\ast \varepsilon_{X}}\\
& i^\ast X
}
\end{equation*}
so $i^\ast\varepsilon_X$ is also an isomorphism by two out of three. By \cite[Proposition~3.12]{Gro13}, the pushout of an isomorphism is an isomorphism, hence $Z=0$. Therefore $L(X)\in\D^{B,A}$ by definition. 

Suppose that $Y\in\D^{B,A}$. Then to compute $L(Y)$, we have the pushout
\begin{equation*}
\vcenter{\xymatrix{
i_!i^\ast Y\ar[r]\ar[d]&Y\ar[d]\\
0\ar[r]&L(Y)
}}
\end{equation*}
Because $Y\in\D^{B,A}$, $i^\ast Y=0$, so the lefthand vertical map is an isomorphism. Hence the map $Y\to L(Y)$ is also an isomorphism. Therefore the essential image of $L$ is precisely $\D^{B,A}$.

Every part of the construction of $L$ is a left adjoint morphism of derivators, so $L$ has a right adjoint $R\colon\D^B\to\D^B$. We now prove that $R$ is fully faithful on $\D^{B,A}\subset \D^B$. To that end, again let $Y\in\D^{B,A}$.

The cone morphism $C\colon \D^{[1]}\to \D$ is a composition of three morphisms. First, let $K$ denote the category
\begin{equation*}
\vcenter{\xymatrix@C=1em@R=1em{
(0,0)\ar[r]\ar[d]& (1,0)\ar[d]\\
(0,1)\ar[r]&(1,1)
}}
\end{equation*}
Further, let $J$ denote the full subcategory of $K$ without the element $(1,1)$. Let $i_J\colon J\to K$ be the natural inclusion and let $i_{[1]}\colon [1]\to J$ be the inclusion of the horizontal arrow.

The first step of $C$ is the extension by zero morphism $i_{[1],\ast}\colon \D^{[1]}\to\D^J$. The second step is the left Kan extension $i_{J,!}\colon\D^J\to\D^K$ which computes the pushout. The final step is the restriction $(1,1)^\ast\colon\D^K\to\D$ to the terminal object of $K$, which is the cone of the coherent morphism we began with.

The right adjoint to $(1,1)^\ast$ is $(1,1)_\ast$. By \cite[Lemma~1.19]{Gro13}, $(1,1)_\ast$ is isomorphic to the constant diagram functor $\pi_K^\ast$, so that
\begin{equation*}
(1,1)_\ast Y\,\cong \vcenter{\xymatrix@R=1em@C=1em{
Y\ar[r]\ar[d]&Y\ar[d]\\
Y\ar[r]&Y
}}
\end{equation*}
with all morphisms identities. The right adjoint to $i_{J,!}$ is $i_J^\ast$, which restricts the constant diagram to the upper-left corner.  The right adjoint to $i_{[1],\ast}$ is $i_{[1]}^!$, an exceptional right adjoint. The construction of this adjoint as a composition of three morphisms can be found following \cite[Corollary~3.8]{Gro13}. The first step is the extension by zero
\begin{equation*}
\vcenter{\xymatrix@R=1em@C=1em{
&Y\ar[r]\ar[dl]&Y\\
Y
}}\mapsto
\vcenter{\xymatrix@R=1em@C=1em{
0\ar[dd]\\
&Y\ar[r]\ar[dl]&Y\\
Y
}}
\end{equation*}

Call the resulting diagram shape $J'$ and the resulting object $Y'\in\D^{B\times J'}$. The second step is to compute the right Kan extension along the inclusion $s\colon J'\to J\times[1]$. The last step will be to restrict ourselves to $(0,0,0)\to (1,0,0)$ in $J\times [1]$. It will suffice to compute $(0,0,0)^\ast s_\ast Y'$ and $(1,0,0)^\ast s_\ast Y'$, which we can do using Der4. We have that
\begin{equation*}
(1,0,0)^\ast s_\ast Y'\cong \pi_{J'_{(1,0,0)/},\ast}\operatorname{pr}^\ast Y',
\end{equation*}
where $\operatorname{pr}\colon J'_{(1,0,0)/}\to J'$ is the projection from the slice category and\linebreak $\pi_{J'_{(1,0,0)/}}\colon J'_{(1,0,0)/}\to e$ is the projection. Examining the slice category, we note that $(1,0,0)$ maps only to $(1,0,1)\in J\times [1]$. Therefore the slice category consists only of the element $(1,0,1)$ and the unique map $(1,0,0)\to(1,0,1)$, so that $\pi_{J'_{(1,0,0)/}}\cong \id_e$ and
\begin{equation*}
 \pi_{J'_{(1,0,0)/},\ast}\operatorname{pr}^\ast Y'\cong \id_\ast (1,0,1)^\ast Y'\cong Y.
\end{equation*}

The object at $(0,0,0)$ is a little more complicated. The slice category $J'_{(0,0,0)/}$ is actually all of $J'$ because $(0,0,0)$ is the initial element of $J\times[1]$. Therefore
\begin{equation*}
(0,0,0)^\ast s_\ast Y'\cong \pi_{J',\ast} Y'.
\end{equation*}
Fortunately, $J'$ contains a homotopy initial subcategory. The restriction along any right adjoint functor is homotopy final, see \cite[Proposition~1.18]{Gro13}, so dually we have that if $\ell\colon I\to J'$ is a left adjoint functor,
\begin{equation*}
\pi_{J',\ast}Y'\cong \pi_{I,\ast}\ell^\ast Y'.
\end{equation*}

Let $\ell\colon I\to J'$ be the inclusion of the full subcategory excluding the element $(1,0,1)$. The right adjoint $r\colon J'\to I$ is the identity on $I$ but takes $(1,0,1)\mapsto (0,0,1)$. Because $(1,0,1)$ is only the codomain of one map in $J'$, we just need to verify that
\begin{equation*}
\Hom_{J'}(\ell(0,0,1),(1,0,1))\cong\Hom_{I}((0,0,1),r(1,0,1)).
\end{equation*}
But each of these are one-point sets, so we indeed have an adjunction. Naturality of the hom-set bijection follows because we are working in posets. Therefore we need to compute the limit of 
\begin{equation*}
\ell^\ast Y'\,=\vcenter{\xymatrix@R=1em@C=1em{
0\ar[dd]\\
&Y\ar[dl]\\
Y
}}
\end{equation*}
which is just the pullback of that diagram. Because the map $Y\to Y$ is an isomorphism and the pullback of an isomorphism is still an isomorphism, we have $\pi_{I,\ast}\ell^\ast Y'\cong 0$.

Therefore the underlying diagram of $i_{[1]}^\ast s_\ast Y'$ is $(0\to Y)\in\D^{B\times [1]}$. The underlying morphism must be the zero morphism, so we do not need to identify it using the calculus of mates as we did in Construction~\ref{cons:coherentcounit}. The remaining right adjoints are the restriction along $\overline i\colon \overline A\to B\times [1]$ followed by the right Kan extension along the composition $q\colon\overline A\to B\times [1]\to B$. First, the object $\overline i^\ast (0\to Y)$ we could write as $(0_A\to Y)$, so that we have forgotten the elements in $B\setminus A$ in the domain of the coherent arrow.

Now we finally use the assumption that $Y\in\D^{B,A}$. We know that $i^\ast Y=0_A$, so $(0_A\to Y)={q}^\ast Y\in \D^{\overline A}$. Therefore the right adjoint to $L$ evaluated at $Y\in\D^{B,A}$ is exactly $q_\ast {q}^\ast Y$. We now claim that the unit $\id_{\D^B}\to q_\ast {q}^\ast$ is an isomorphism, so that the right adjoint to $L$ is isomorphic to the identity on elements of $\D^{B,A}$.

By usual category theory, the unit of $({q}^\ast,q_\ast)$ is an isomorphism if and only if ${q}^\ast$ is fully faithful. We now show that this is the case. $q\colon \overline A\to B$ itself is left adjoint to the functor $i_1\colon B\to \overline A$ defined by $b\mapsto (b,1)$. To illustrate this, let $(a,n)\in\overline A$ and $b\in B$. Then
\begin{equation*}
\Hom_{B}(q(a,n),b)=\Hom_B(a,b)\cong \Hom_{\overline A}((a,n),(b,1))=\Hom_{\overline A}((a,n),i_1(b))
\end{equation*}
The adjunction $(q,i_1)$ in $\Dia$ under $\D$ becomes an adjunction $(i_1^\ast,{q}^\ast)$ from $\D^B$ to $\D^{\overline A}$. Because adjoints are essentially unique, we have that ${q}^\ast\cong i_{1,\ast}$. Because $i_1$ is fully faithful, $i_{1,\ast}$ is also fully faithful by Proposition~\ref{prop:fullyfaithfulkanextension}, which proves ${q}^\ast$ is as well.

Hence we finally may conclude that the morphism of prederivators $L\colon \D^B\to \D^B$ has a right adjoint which is fully faithful on the essential image of $L$. We have thus accomplished the plan at the beginning of this proof, and so obtain a localization of the derivator $\D^B$ with essential image $\D^{B,A}$.
\end{proof}

\begin{cor}
The prederivator $\D^{B,A}$ is a pointed derivator.
\end{cor}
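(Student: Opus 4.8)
The plan is to invoke the two ingredients already assembled: Theorem~\ref{thm:vanishingloc}, which exhibits $\D^{B,A}$ as the essential image of a localization of $\D^B$, and Lemma~\ref{lemma:localisederivator}, which promotes a full subprederivator with an adjoint inclusion to a derivator. So the argument splits into three short checks.

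First I would note that $\D^B$ is itself a derivator: this is the standard fact, recalled in \S2, that shifting a derivator by a fixed category $A$ again yields a derivator (cf.\ \cite[Theorem~1.25]{Gro13}). Next, I would observe that the localization morphism $L\colon\D^B\to\D^B$ built in the proof of Theorem~\ref{thm:vanishingloc}, with codomain restricted to its essential image $\D^{B,A}$, is precisely a left adjoint to the inclusion $\D^{B,A}\to\D^B$ — this is the derivator analogue of Proposition~\ref{prop:catloc2}, made explicit in the discussion surrounding Proposition~\ref{prop:locofders}. Indeed, by construction $L$ factors as $GF$ with $G$ the levelwise inclusion and $F$ a left adjoint morphism whose right adjoint $R$ is fully faithful on the image of $F$, so $F$ (which sends $X\mapsto L(X)$) is the desired left adjoint. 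With the inclusion $\D^{B,A}\to\D^B$ now known to admit a left adjoint, Lemma~\ref{lemma:localisederivator} immediately yields that $\D^{B,A}$ is a derivator.

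Finally I would check pointedness, the only genuinely new observation. The base of $\D^{B,A}$ is $\D(B,A)=\{X\in\D(B):i^\ast X=0\}$. Since $\D$ is pointed, $\D(B)$ is pointed with zero object $0$; because restriction functors in a pointed derivator are pointed functors (the remark following the definition of \emph{pointed}), $i^\ast 0\cong 0\in\D(A)$, so $0\in\D(B,A)$. A zero object of an ambient category that lies in a full subcategory remains a zero object there, so $\D^{B,A}(e)=\D(B,A)$ has a zero object and $\D^{B,A}$ is pointed.

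There is essentially no obstacle here; everything has been done upstream. If I had to name the most delicate step, it is merely making sure that the localization produced by Theorem~\ref{thm:vanishingloc} literally supplies a left adjoint to the inclusion rather than just an idempotent endomorphism of $\D^B$ — but this is handled exactly as in the passage after Proposition~\ref{prop:locofders}, so no additional work is required.
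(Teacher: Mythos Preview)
Your proof is correct and follows essentially the same approach as the paper's own proof. The paper's argument is a two-sentence version of yours: it invokes Lemma~\ref{lemma:localisederivator} on the localization produced by Theorem~\ref{thm:vanishingloc} to conclude $\D^{B,A}$ is a derivator, and then observes that $0\in\D(B)$ lies in $\D(B,A)$ to establish pointedness; your proposal simply makes explicit the intermediate observations (that $\D^B$ is a derivator, that the localization yields a left adjoint to the inclusion via Proposition~\ref{prop:locofders}) that the paper leaves tacit.
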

\begin{proof}
Because $\D^{B,A}$ is a localization of a derivator, Lemma~\ref{lemma:localisederivator} applies, so $\D^{B,A}$ is a derivator. Clearly $0\in \D(B)$ is in the subcategory $\D(B,A)$, so $\D^{B,A}$ is pointed as well.
\end{proof}

\begin{remark}\label{rk:localizekanformula}
The left and right Kan extensions of the localized derivator $\D^{B,A}$ have explicit formulas based on those of $\D^B$. For a functor $u\colon J\to K$, write $u'_!$ and $u'_\ast$ for the left and right Kan extensions of $u$ in $\D^{B,A}$. Then the proof of Lemma~\ref{lemma:localisederivator} provides
\begin{equation*}
u'_!=L_Ku_!R_J\quad\text{and}\quad u'_\ast=L_Ku_\ast R_J.
\end{equation*}

However, the restriction of $u_!$ and $u_\ast$ to $\D^{B,A}$ have their images in $\D^{B,A}$ as well. Because the morphism of derivators $i^\ast\colon\D^B\to\D^A$ is continuous and cocontinuous and $u_\ast$ is pointed, we have for any $X\in\D^{B}(J)$,
\begin{equation*}
i^\ast u_\ast X\cong u_\ast i^\ast X\cong u_\ast 0\cong 0.
\end{equation*}
Therefore $u_\ast X\in \D^{B,A}(K)$ already. Similarly, $u_!X\in \D^{B,A}(K)$. This is a much quicker way to prove the above corollary, but does not give us the localization morphism which will prove instrumental in the next section.

The situation of vanishing subderivators is exceptional among localizations of derivators. In general, the localized derivator $\E\subset\D$ need not be closed under both left and right Kan extensions in $\D$, but will obtain its own left and right Kan extensions as above.
\end{remark}

\begin{remark}
There is also a construction of $\D^{B,A}$ as a colocalization of $\D^B$, \ie the inclusion $\D^{B,A}\to\D^B$ admits a right adjoint. This is the case if and only if the dual situation of Proposition~\ref{prop:locofders}~(1) holds. We obtain the colocalization morphism $\D^B\to\D^B$ by composing an adjunction similar to the one above.

We can construct the coherent unit of the $(i^\ast,i_\ast)$ adjunction and take its fiber, which defines $R'\colon\D^B\to\D^B$. We can see that the image of $R'$ is contained in $\D^{B,A}$ because $Z'=i^\ast R'(X)$ for $X\in\D^B$ is computed as the pullback
\begin{equation*}
\xymatrix{
Z'\ar[r]\ar[d]&i^\ast X\ar[d]^-{i^\ast\eta_X}\\
0\ar[r]&i^\ast i_\ast i^\ast X
}
\end{equation*}
where the righthand map is an isomorphism for reasons dual to the above. Hence $Z'=0$ so $R'(X)\in\D^{B,A}$. The rest of the proof proceeds similarly.
\end{remark}

\section{Prespectra in derivators}

We now need to define what we mean by stable in the theory of derivators. Any derivator admits an intrinsic notion of suspension and loop endofunctors.
\begin{notn}\label{notn:square}
Let $\square$ be the category
\begin{equation*}
\xymatrix@C=1em@R=1em{
(0,1)\ar[r]&(1,1)\\
(0,0)\ar[r]\ar[u]&(1,0)\ar[u]
}
\end{equation*}
Let $i_{\ll}\colon\ll\to\square$ be the full subcategory lacking the element $(1,1)$ and $i_\ur\colon\ur\to\square$ the full subcategory lacking $(0,0)$.
\end{notn}

\begin{remark}
In most papers on derivators, the category $\square$ is oriented so that the top-left object is initial and the bottom-right object is final. However, as the majority of diagrams in this paper are subcategories of $\N^2$, we have oriented $\square$ using the usual Cartesian coordinates.
\end{remark}

\begin{defn}\label{defn:suspensionloop}
Let $\D$ be a derivator. Define the \emph{suspension} endomorphism \linebreak$\Sigma\colon\D\to\D$ by the composition
\begin{equation*}
\vcenter{\xymatrix@C=3em{
\D\ar[r]^-{(0,0)_\ast}&\D^\ll\ar[r]^-{i_{\ll,!}}&\D^\square\ar[r]^-{i_\ur^\ast}&\D^\ur\ar[r]^-{(1,1)^\ast}&\D.
}}
\end{equation*}
Define the \emph{loop} endomorphism $\Omega\colon\D\to\D$ by the composition
\begin{equation*}
\vcenter{\xymatrix@C=3em{
\D\ar[r]^-{(1,1)_!}&\D^\ur\ar[r]^-{i_{\ur,\ast}}&\D^\square\ar[r]^-{i_\ll^\ast}&\D^\ll\ar[r]^-{(0,0)^\ast}&\D.
}}
\end{equation*}
\end{defn}
We write the restriction in two steps to emphasize the following proposition.
\begin{prop}
\cite[Proposition~3.17]{Gro13} Let $\D$ be a pointed derivator. Then $(\Sigma,\Omega)$ is an adjunction of endomorphisms of $\D$.
\end{prop}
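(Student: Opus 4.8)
The plan is to exhibit $(\Sigma,\Omega)$ as a composite of adjoint pairs, so that the adjunction is inherited from the general fact that a composite of left adjoint morphisms is left adjoint to the composite of the corresponding right adjoints (with units and counits pasted together from the triangle identities of the factors). Comparing the two definitions, the suspension is $(1,1)^\ast\circ i_\ur^\ast\circ i_{\ll,!}\circ(0,0)_\ast$ and the loop is $(0,0)^\ast\circ i_\ll^\ast\circ i_{\ur,\ast}\circ(1,1)_!$, read in the opposite order. So the first step is to record the four individual adjunctions in play: $(0,0)_\ast$ is right adjoint to $(0,0)^\ast\colon\D^\ll\to\D$ (this is the extra adjoint available for the cosieve $(0,0)\colon e\to\ll$ in a pointed derivator, by Proposition~\ref{prop:stronglypointed} and Definition~\ref{defn:extraadjoint}); $i_{\ll,!}$ is left adjoint to $i_\ll^\ast$ by Der3; $i_\ur^\ast$ is left adjoint to $i_{\ur,\ast}$ by Der3; and $(1,1)^\ast$ is left adjoint to $(1,1)_!$, wait --- here I must be careful about variance.

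Let me instead match the factors directly. Writing $\Sigma = (1,1)^\ast \, i_\ur^\ast \, i_{\ll,!}\, (0,0)_\ast$ and $\Omega = (0,0)^\ast\, i_\ll^\ast\, i_{\ur,\ast}\, (1,1)_!$, I claim $\Sigma$ is left adjoint to $\Omega$ by pairing the $k$-th factor of $\Sigma$ with the $k$-th-from-the-end factor of $\Omega$: $(1,1)^\ast \dashv (1,1)_\ast$? No --- the outermost factor of $\Sigma$ is $(1,1)^\ast$ and the innermost of $\Omega$ is $(1,1)_!$, and indeed $(1,1)_! \dashv (1,1)^\ast$, i.e. $(1,1)^\ast$ is \emph{right} adjoint there, which is the wrong side. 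The correct reading is: $\Sigma = G_4 G_3 G_2 G_1$ where $G_1 = (0,0)_\ast$, $G_2 = i_{\ll,!}$, $G_3 = i_\ur^\ast$, $G_4 = (1,1)^\ast$, and each $G_j$ is a \emph{left} adjoint morphism of derivators --- indeed $(0,0)_\ast$ is cocontinuous for the cosieve $(0,0)$ (as noted in the excerpt, for a cosieve the extension-by-zero $u_!$ has a further left adjoint, so dually for the sieve one gets $u_\ast$ cocontinuous; here $(0,0)\colon e\to\ll$ hits the initial object, and one checks the relevant variance), $i_{\ll,!}$ is a left adjoint by Der3, $i_\ur^\ast$ is a left adjoint (restriction) and $(1,1)^\ast$ is a left adjoint (restriction). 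Their right adjoints are, respectively, $G_1^R = (0,0)^\ast$, $G_2^R = i_\ll^\ast$, $G_3^R = i_{\ur,\ast}$, $G_4^R = (1,1)_!$, and the composite right adjoint $G_1^R G_2^R G_3^R G_4^R = (0,0)^\ast i_\ll^\ast i_{\ur,\ast}(1,1)_! = \Omega$. So $\Sigma \dashv \Omega$ follows from the lemma that composites of adjunctions compose.

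Concretely, the steps I would carry out are: (1) verify that each of the four morphisms constituting $\Sigma$ is a left adjoint morphism of derivators --- the three of the form $u^\ast$, $u_!$ are standard (left adjoints by Der3 / definition of shifted derivator, cf.\ the discussion after Proposition~\ref{prop:locofders} in the excerpt), and $(0,0)_\ast$ requires invoking that $\D$ is strongly pointed (Proposition~\ref{prop:stronglypointed}) so that the sieve-side extension-by-zero acquires an extra left adjoint, hence is cocontinuous and thus a left adjoint morphism of the shifted derivators; (2) identify each right adjoint explicitly and check that the composite of right adjoints is exactly the four-fold composite defining $\Omega$, in the correct order; (3) invoke the $2$-categorical fact that a composite of left adjoint $1$-morphisms is left adjoint to the composite of their right adjoints, with unit/counit assembled from the pieces (this holds in any $2$-category, in particular in $\mathbf{Der}$ --- or one can assemble the modifications by hand from the four units and counits via the usual whiskering/pasting). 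This yields the desired adjunction $(\Sigma,\Omega)$ of endomorphisms of $\D$.

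The main obstacle --- really the only non-formal point --- is step (1) for the functor $(0,0)_\ast$: one must be sure that in a pointed derivator the object-inclusion $(0,0)\colon e\to\ll$ is of the right type (it is a sieve into the poset $\ll$, since $(0,0)$ is initial there --- wait, into $\ll$ with the orientation fixed in Notation~\ref{notn:square}, $(0,0)$ is the \emph{initial} object, so $(0,0)\colon e\to\ll$ is a cosieve, and then it is $(0,0)_!$ that has a left adjoint; but the suspension uses $(0,0)_\ast$, whose right adjoint is the honest $(0,0)^\ast$, and $(0,0)_\ast$ is a left adjoint morphism precisely when it is cocontinuous, which for this particular inclusion in a pointed derivator is exactly the content invoked just before Definition~\ref{defn:extraadjoint} in the excerpt: for a sieve $u$, $u_\ast$ is cocontinuous). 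So the careful bookkeeping is to get the sieve/cosieve orientation right for the chosen convention and to cite Proposition~\ref{prop:stronglypointed} to know $\D$ is strongly pointed; once that is settled, everything else is the purely formal composition of adjunctions.
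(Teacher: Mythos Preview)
The paper does not prove this proposition; it merely cites \cite[Proposition~3.17]{Gro13}, so there is no in-paper argument to compare against. I will evaluate your attempt on its own.

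Your plan is to write $\Sigma = G_4 G_3 G_2 G_1$ as a composite of left adjoints and read off the composite right adjoint as $\Omega$. The middle two factors work: $i_{\ll,!}\dashv i_\ll^\ast$ and $i_\ur^\ast\dashv i_{\ur,\ast}$, so $G_2^R = i_\ll^\ast$ and $G_3^R = i_{\ur,\ast}$ as you say. But the outer two do not. You assert $G_1^R = (0,0)^\ast$ and $G_4^R = (1,1)_!$, yet $(0,0)^\ast$ is the \emph{left} adjoint of $(0,0)_\ast$ and $(1,1)_!$ is the \emph{left} adjoint of $(1,1)^\ast$. The genuine right adjoints are the exceptional $(0,0)^!$ (available because $(0,0)\hookrightarrow\ll$ is a sieve --- $(0,0)$ is initial in $\ll$, so this is a sieve, not a cosieve as you write at one point) and $(1,1)_\ast$. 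Since $(1,1)$ is terminal in $\ur$ we have $(1,1)_\ast\cong\pi_\ur^\ast$, and the honest right-adjoint composite $(0,0)^!\, i_\ll^\ast\, i_{\ur,\ast}\,(1,1)_\ast$ sends $x$ to $(0,0)^!$ of the constant $\ll$-diagram on $x$, which is $0$, not $\Omega x$. You correctly note that pointedness makes $(0,0)_\ast$ cocontinuous, but the further right adjoint this produces is $(0,0)^!$, not $(0,0)^\ast$; your sieve/cosieve bookkeeping wavers precisely because you are trying to force the wrong functor into that slot.

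The repair is to treat the outer factors not as links in a chain of adjunctions but as \emph{equivalences} onto vanishing subderivators. The morphism $(0,0)_\ast$ is an equivalence $\D\simeq\D^{\ll,\{(0,1),(1,0)\}}$ with inverse $(0,0)^\ast$, and $(1,1)_!$ is an equivalence $\D\simeq\D^{\ur,\{(0,1),(1,0)\}}$ with inverse $(1,1)^\ast$. The adjunction $i_\ur^\ast i_{\ll,!}\dashv i_\ll^\ast i_{\ur,\ast}$ between $\D^\ll$ and $\D^\ur$ restricts to these full subderivators (both functors preserve vanishing at $(0,1)$ and $(1,0)$), and conjugating an adjunction by equivalences --- using their \emph{inverses}, which here are $(0,0)^\ast$ and $(1,1)^\ast$ --- yields exactly $\Sigma\dashv\Omega$.
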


\begin{defn}
A pointed derivator $\D$ is \emph{stable} if $(\Sigma,\Omega)$ is an adjoint equivalence of derivators.
\end{defn}

\begin{remark}
In the past, the descriptor stable referred to \emph{strong} derivators in which $(\Sigma,\Omega)$ is an adjoint equivalence, giving a canonical triangulated structure on each $\D(K)$. Lately it has become more common to call such a derivator \emph{triangulated} instead, especially given the existence of stable derivators which cannot admit a canonical triangulation; see \cite[Remark~5.4]{Lag17}.
\end{remark}

There are other perspectives on stability in derivators. Groth in \cite{Gro16b} has explored some equivalent definitions of stability, and the interested reader is directed there. Our aim for stabilization is to force the $(\Sigma,\Omega)$ adjunction to be an equivalence. We cannot do this solely in $\D(e)$, but will use the higher structure of the derivator $\D$ in a way recalling \cite[Definition~1.1]{Hov01}.

\begin{defn}\label{defn:prespectrum}
Consider the poset $\Z^2$ viewed as a category. Let $V\subset\Z^2$ be the full subcategory of those $(i,j)$ such that $|i-j|\leq 1$. Let $\partial V$ be the full subcategory of $V$ on $(i,j)$ such that $|i-j|=1$. We define the \emph{pointed derivator of prespectrum objects on $\D$} to be $\Sp\D:=\D^{V,\partial V}$.
\end{defn}

\begin{notn}
In the introduction, $\Sp$ referred to the actual spectrum objects associated to an $\infty$-category or model category. We follow Heller's notation in using $\Sp$ for the prespectrum objects and will use $\St$ to refer to (stable) spectrum objects.
\end{notn}

We will often write $X_i$ for $(i,i)^\ast X$ for brevity. This should cause no confusion as $(i,j)^\ast X=0$ if $i\neq j$ by the vanishing criterion. A prespectrum $X\in\Sp\D$ then has underlying diagram
\begin{equation*}
\xymatrix@C=1em@R=1em{
&{}&{}&{}&{}\\
&&0\ar[r]&X_1\ar@.[ur]&{}\\
&0\ar[r]&X_0\ar[r]\ar[u]&0\ar[u]\\
&X_{-1}\ar[r]\ar[u]&0\ar[u]\\
\ar@.[ur]&
}
\end{equation*}
extending infinitely in both directions.

To justify the definition, we need to describe in what sense a prespectrum contains information about the $(\Sigma,\Omega)$ adjunction on $\D$. On the surface, a prespectrum $X\in~\Sp\D$ is equivalent to a discrete collection of objects $\{X_i\}$ for $i\in\Z$. However, the category $V$ and the vanishing of $\partial V$ naturally encode the structure maps of the prespectrum.

Let $X\in \Sp\D$, and let $i_n\colon\square\to V$ be the functor defined by $(a,b)\mapsto (a+n,b+n)$. Then we have
\begin{equation*}
i_n^\ast X\,=
\vcenter{\xymatrix@R=1em@C=1em{
0\ar[r]&X_{n+1}\\
X_n\ar[u]\ar[r]&0\ar[u]
}}\,\in\D^{\square}.
\end{equation*}

We follow \cite[\S2]{Gro16a} for the following notation. The category $\square$ is the cone on the category $\ur$, that is, formally adding an initial object to $\ur$. Recall from Definition~\ref{defn:suspensionloop} that the loop of an object $Y\in\D$ is $(0,0)^\ast i_{\ur,\ast}$ applied to the following diagram:
\begin{equation*}
\vcenter{\xymatrix@R=1em@C=1em{
0\ar[r]&Y\\
&0\ar[u]
}}
\end{equation*}
From the square $i_n^\ast X$, we would like to ignore the corner at $(0,0)$ and apply $i_{\ur,\ast}$ to the rest of the diagram. This should give us the object $\Omega X_{n+1}$, and we can now describe how close $X_n$ is to $\Omega X_{n+1}$. This will be the $n$th structure map of $X$.

This occurs formally as follows: let $A$ be the cone on the category $\square$, \ie the double cone on $\ur$. Denote by $\varnothing$ the object that is added to $\square$ to form $A$. Let $s_\square\colon \square\to A$ be the functor sending $(0,0)$ to $\varnothing$ and which is the identity elsewhere. Then it is a straightforward computation by Der4 that
\begin{equation}\label{dia:comparisonmorphism}
s_{\square,\ast} i_n^\ast X\,=
\vcenter{\xymatrix@R=1em@C=1em{
&0\ar[r]&X_{n+1}\\
&\Omega X_{n+1}\ar[r]\ar[u]&0\ar[u]\\
X_n\ar[ur]\ar@(u,l)[uur]\ar@(r,d)[urr]
}}\,\in\D^A.
\end{equation}
If we restrict to the map $\varnothing\to (0,0)$ in $A$, this gives us a coherent map $(X_n\to \Omega X_{n+1})$ in $\D^{[1]}$. We could have gotten \emph{incoherent} structure maps using the universal property of the pullback in $\D(e)$, but we require coherence for what follows.

Repeating this construction for all $n\in\Z$ gives us the structure maps of $X\in~\Sp\D$. We will make a formal definition later, but we can anticipate that a (stable) spectrum is a prespectrum such that all of these structure maps are isomorphisms.

We will first describe how to pass from $\D$ to $\Sp\D$ in a canonical way, then in \S6 discuss stable spectra and the stabilization morphism.

\begin{prop}
The morphism $(0,0)^\ast\colon \Sp\D\to\D$ sending $X\mapsto X_0$ admits a left adjoint.
\end{prop}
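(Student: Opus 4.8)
The plan is to factor $(0,0)^\ast\colon\Sp\D\to\D$ through $\D^V$ and recognise both factors as morphisms that admit left adjoints. Write $\iota\colon\Sp\D=\D^{V,\partial V}\to\D^V$ for the levelwise full inclusion. Since $\iota$ is the identity on underlying objects and $(0,0)^\ast\colon\D^V\to\D$ evaluates a coherent $V$-diagram at $(0,0)$, the morphism in the statement is precisely the composite
\begin{equation*}
\Sp\D\xrightarrow{\ \iota\ }\D^V\xrightarrow{\ (0,0)^\ast\ }\D .
\end{equation*}

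First, $(0,0)^\ast\colon\D^V\to\D$ is the pullback morphism along $(0,0)\colon e\to V$, and as recalled after the definition of adjoint morphisms of derivators, for every functor $u$ the pair $(u_!,u^\ast)$ is an adjunction of morphisms of derivators. Hence $(0,0)^\ast\colon\D^V\to\D$ admits the left adjoint $(0,0)_!\colon\D\to\D^V$. By Der4 one computes that $(0,0)_!Y$ has underlying diagram equal to $Y$ on the sub-poset $\{(a,b)\in V:a\geq 0,\ b\geq 0\}$ and $0$ elsewhere; in particular $(0,0)_!Y$ is generally not a prespectrum, so this step alone does not suffice.

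Second, Theorem~\ref{thm:vanishingloc} applied with $B=V$ and $A=\partial V$ exhibits $\Sp\D=\D^{V,\partial V}$ as a localization of $\D^V$; by the definition of localization this says exactly that the inclusion $\iota\colon\Sp\D\to\D^V$ admits a left adjoint $L\colon\D^V\to\Sp\D$ (the localization morphism of that theorem, with codomain restricted to its essential image). Finally, adjunctions compose in the $2$-category $\mathbf{Der}$: given adjoint pairs $(F_1,G_1)$ and $(F_2,G_2)$ of morphisms of derivators whose units and counits are modifications, the composite units and counits make $(F_2F_1,G_1G_2)$ an adjoint pair. Taking $(F_1,G_1)=\bigl((0,0)_!,(0,0)^\ast\bigr)$ and $(F_2,G_2)=(L,\iota)$ yields that $(0,0)^\ast\colon\Sp\D\to\D$ admits the left adjoint $L\circ(0,0)_!\colon\D\to\Sp\D$.

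The argument is essentially formal once Theorem~\ref{thm:vanishingloc} is available — no strength hypothesis and no adjoint functor theorem is required — so there is no real obstacle; the single point worth flagging is that $(0,0)_!$ by itself lands outside $\Sp\D$, so the left adjoint must be the composite $L\circ(0,0)_!$, \ie one genuinely reflects the $V$-diagram $(0,0)_!Y$ into the subderivator of prespectra.
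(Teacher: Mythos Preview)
Your argument is correct: factoring $(0,0)^\ast$ as $\Sp\D\xrightarrow{\iota}\D^V\xrightarrow{(0,0)^\ast}\D$ and composing the left adjoints $(0,0)_!$ and the localization $L$ from Theorem~\ref{thm:vanishingloc} does produce the required left adjoint. In fact the paper's Remark preceding the proof sketches exactly this approach and concedes that it works (``if we compose $(0,0)_!$ with the localization $\D^V\to\D^{V,\partial V}$ we will necessarily obtain a left adjoint to $(0,0)^\ast$'').

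The paper nonetheless proceeds differently, and deliberately so. Rather than invoke the abstract localization of Theorem~\ref{thm:vanishingloc}, it builds the left adjoint as an explicit three-stage composite
\[
\D\xrightarrow{(0,0)_!}\D^{V^{\leq 0}}\xrightarrow{\iota_{\leq 0,\ast}}\D^{V'}\xrightarrow{\iota_!}\D^V,
\]
where $V^{\leq 0}$ and $V'$ are carefully chosen sub-posets of $V$ so that the first two maps are extensions by zero along a cosieve and a sieve, and then checks that the composite right adjoint restricts to $(0,0)^\ast$ on $\Sp\D$. The payoff is an explicit description of $Lx$ as the connective suspension prespectrum: the paper immediately uses this in Lemma~\ref{lemma:cocartsquares} to show that the squares $i_n^\ast Lx$ are cocartesian, hence $(Lx)_n\cong\Sigma^n x$. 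Your slick argument establishes existence but gives no direct handle on what $Lx$ looks like; one would still need to unpack the localization morphism of Theorem~\ref{thm:vanishingloc} (the cone of the counit $i_!i^\ast\to\id$) to recover that information, which is arguably more work than the paper's direct construction.
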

\begin{remark}
For any prederivator $\E$, we can define morphisms $\Sp\D\to\E$ by using the composition $\Sp\D\subset\D^V\to\E$, but we cannot necessarily define morphisms $\E\to\Sp\D$ in the same way. Without more information there is no reason to suspect that an arbitrary morphism $\E\to\D^V$ will vanish on $\partial V$.

We know that the morphism $(0,0)_!$ is left adjoint to $(0,0)^\ast\colon\D^V\to \D$, but $(0,0)_!\colon\D\to\D^V$ is not the adjoint we are looking for. More specifically, let us calculate $(1,0)^\ast(0,0)_! x$ for some $x\in \D$. By Der4, we have a canonical isomorphism 
\begin{equation*}
\pi_{e_{/(1,0)},!}\operatorname{pr}^\ast\!x\to (1,0)^\ast(0,0)_!x,
\end{equation*}
where $\operatorname{pr}\colon e_{/(1,0)}\to e$ is the canonical projection from the slice category and $\pi_{e_{/(1,0)}}$ is the unique map to the terminal category $e$. But since $V$ is a poset, the category $e_{/(1,0)}$ contains only one element, namely $(e,(0,0)\to (1,0))$. Therefore $\operatorname{pr}$ and $\pi_{e_{/(1,0)}}$ are both isomorphisms, giving $\pi_{e_{/(1,0)},!}\operatorname{pr}^\ast\!x\cong x$. Therefore the object $(0,0)_!x$ does not vanish on $\partial V$. Hence $(0,0)_!$ does not have its image in $\D^{V,\partial V}\subset \D^V$.

We can generalise the above argument to other points in $V$. For any $(i,j)$ with $i,j>0$, $e_{/(i,j)}$ is still isomorphic to the terminal category, so $\pi_{e_{/(i,j),!}}\operatorname{pr}^\ast\!x\cong x$. For $i,j<0$, we have $(i,j)^\ast(0,0)_!x\cong 0$ because the slice category $e_{/(i,j)}$ is empty. Hence~$(0,0)_!x$ has the form
\begin{equation*}
\xymatrix@C=1em@R=1em{
&{}&{}&{}&{}\\
&&x\ar[r]&x\ar@.[ur]&{}\\
&0\ar[r]&x\ar[r]\ar[u]&x\ar[u]\\
&0\ar[r]\ar[u]&0\ar[u]\\
\ar@.[ur]&
}
\end{equation*}

However, if we compose $(0,0)_!$ with the localization $\D^V\to \D^{V,\partial V}$ we will necessarily obtain a left adjoint to $(0,0)^\ast$, as $(0,0)^\ast$ composed with the inclusion $\D^{V,\partial V}\to \D^V$ is still $(0,0)^\ast$. This means that something must happen on the $\partial V$ part of the diagram, but it is unclear from this description (slick as it is) what is actually going on at the level of objects. In order to understand better the left adjoint, the below proof is constructive.
\end{remark}

\begin{proof}
We construct the left adjoint in three stages. Let $V^{\leq 0}$ be the full subcategory of $V$ on those $(i,j)$ such that $i,j\leq 0$. Then consider the functor $(0,0)\colon e\to~V^{\leq0}$. This functor is a cosieve, so the left Kan extension $(0,0)_!\colon \D(e)\to \D(V^{\leq0})$ is extension by zero.

Next, let $V'$ be the full subcategory of $V$ which contains both $V^{\leq 0}$ and $\partial V$. That is, $V'$ contains all $(i,j)$ except for $(i,i)$ with $i>0$. Let $\iota_{\leq0}\colon V^{\leq0}\to V'$ be the inclusion. This map is a sieve, so the right Kan extension $\iota_{\leq0,\ast}$ is extension by zero. By Proposition~\ref{prop:stronglypointed}, $\iota_{\leq0,\ast}$ admits a right adjoint which we name $\iota_{\leq 0}^!$. Finally, we can consider the inclusion $\iota\colon V'\to V$. Therefore we obtain the following picture
\begin{equation*}
\vcenter{\xymatrix{
&\D\ar@/_1pc/[d]_{(0,0)_!}\ar `l/2em[dddl]`d[ddd]_L[ddd]\\
{}&\D^{V^{\leq0}}\ar@/_1pc/[d]_{\iota_{\leq0,\ast}}\ar@/_1pc/[u]_{(0,0)^\ast}\\
{}&\D^{V'}\ar@/_1pc/[d]_{\iota_!}\ar@/_1pc/[u]_{\iota_{\leq0}^!}\\
&\D^V\ar@/_1pc/[u]_{\iota^\ast}\\
}}
\end{equation*}

Let us call this left adjoint $L$. We will compute $L$ explicitly at the end of this section, and prove that for $x\in\D$, $Lx$ is the connective suspension prespectrum with $(Lx)_0\cong x$. This is then a derivator version of $\Sigma^\infty$. We now need to check that on $\D^{V,\partial V}$, the right adjoint to $L$ is $(0,0)^\ast\colon\D^{V,\partial V}\to~\D(e)$. To do this, we need to understand better the morphism~$\iota_{\leq 0}^!$.

From \cite[Corollary~3.8]{Gro13} we have that the $(\iota_{\leq 0,\ast},\iota_{\leq 0}^!)$ adjunction factors as
\begin{equation*}
\vcenter{\xymatrix{
\D^{V^{\leq 0}}\ar@/_1pc/[d]_{\iota_{\leq 0,\ast}}\\
\D^{V',V'\setminus V^{\leq 0}}\ar@/_1pc/[d]_{\text{incl}}\ar@/_1pc/[u]_{\iota_{\leq 0}^\ast}\\
\D^{V'}\ar@/_1pc/[u]_{R}
}}
\end{equation*}
where this bottom adjunction is a vanishing subderivator colocalization. Suppose that $X\in \D^{V,\partial V}$. Then $\iota^\ast X\in \D^{V'}$ already vanishes on $V'\setminus V^{\leq 0}$, so we know that the colocalization morphism $R$ is isomorphic to the identity on $\iota^\ast X$. Therefore for any $X\in \D^{V,\partial V}$,
\begin{equation*}
(0,0)^\ast\iota_{\leq 0}^!\iota^\ast X\cong (0,0)^\ast\iota_{\leq 0}^\ast \iota^\ast X=(0,0)^\ast X.
\end{equation*}
This shows that $L$ is left adjoint to $(0,0)^\ast$ on $\D^{V,\partial V}\subset\D^V$.

Therefore when we consider the corresponding morphism of derivators $L\colon\D\to~\Sp\D$, we obtain the total adjunction
\begin{equation}\label{eq:Ldiagram}
\vcenter{\xymatrix{
&\D\ar@/_1pc/[d]_{(0,0)_!}\ar `l/2em[dddl]`d[ddd]_L[ddd]&{}\\
{}&\D^{V^{\leq0}}\ar@/_1pc/[d]_{\iota_{\leq0,\ast}}\ar@/_1pc/[u]_{(0,0)^\ast}&\\
{}&\D^{V',V'\setminus V^{\leq 0}}\ar@/_1pc/[d]_{\iota_!}\ar@/_1pc/[u]_{\iota_{\leq0}^\ast}&\\
&\Sp\D\ar@/_1pc/[u]_{\iota^\ast}\ar `r/2em[uuur]`u[uuu]_{(0,0)^\ast}[uuu]\\
}}
\end{equation}
\end{proof}

Let us now compute the essential image of $L$ by going step by step. Let $x\in\D$. As indicated above, $\iota_{\leq0,\ast}(0,0)_!x$ extends $x$ by zero, giving underlying diagram
\begin{equation*}
\xymatrix@C=1em@R=1em{
&{}&{}&{}\\
&&0\ar@.[ur]&{}&{}\\
&0\ar[r]&x\ar[r]\ar[u]&0\ar@.[ur]\\
&0\ar[r]\ar[u]&0\ar[u]\\
\ar@.[ur]&
}
\end{equation*}
with a constant zero diagram towards the bottom left and zeroes along the diagonals going up and right. This shows (as we claimed above) that the image of $L$ vanishes on $\partial V$. The final functor $\iota_!$ is some sort of colimit. The next lemma will tell us what exactly it does.

\begin{lemma}\label{lemma:cocartsquares}
Let $i_n\colon \square\to V$ be the inclusion of the square $(a,b)\mapsto (a+n,b+n)$ for $n\geq 0$. Then $i_n^\ast Lx$ is cocartesian for any $x\in \D$.
\end{lemma}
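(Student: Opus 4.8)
The plan is to unwind $Lx=\iota_!\,\iota_{\leq 0,\ast}\,(0,0)_!\,x$ from Diagram~\eqref{eq:Ldiagram} and to analyse the outermost left Kan extension $\iota_!\colon\D^{V'}\to\D^V$ by adjoining the missing diagonal objects $V\setminus V'=\{(i,i):i\geq 1\}$ one at a time. Write $Y:=\iota_{\leq 0,\ast}(0,0)_!x\in\D^{V'}$; as computed just above the lemma, $Y$ has underlying diagram $x$ at $(0,0)$ and $0$ at every other object of $V'$, and $Lx=\iota_!Y$. For $m\geq 0$ set $V'_m:=V'\cup\{(1,1),\dots,(m,m)\}\subseteq V$, so $\bigcup_m V'_m=V$, and let $j_m\colon V'_{m-1}\hookrightarrow V'_m$, $\lambda_m\colon V'\hookrightarrow V'_m$, $\kappa_m\colon V'_m\hookrightarrow V$ be the inclusions; thus $\iota=\kappa_m\lambda_m$ and $\lambda_m=j_m\cdots j_1$. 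Put $Y_0:=Y$ and $Y_m:=j_{m,!}Y_{m-1}\cong\lambda_{m,!}Y$.

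The next step is the reduction to finite data: for $n\geq 0$ the image of $i_n\colon\square\to V$ lies in $V'_{n+1}$, so $i_n$ corestricts to $i'_n\colon\square\to V'_{n+1}$, and because $\kappa_{n+1}$ is fully faithful (Proposition~\ref{prop:fullyfaithfulkanextension}, hence $\kappa_{n+1}^\ast\kappa_{n+1,!}\cong\id$) and $\iota_!\cong\kappa_{n+1,!}\lambda_{n+1,!}$, one gets $i_n^\ast Lx\cong(i'_n)^\ast Y_{n+1}$. It therefore suffices to prove, by induction on $m$, that: (a) $Y_m$ restricts to $Y$ along $V'\hookrightarrow V'_m$; and (b) the square $(i'_n)^\ast Y_m\in\D^\square$ is cocartesian for $0\leq n\leq m-1$. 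The case $m=0$ is vacuous, and in passing from $m-1$ to $m$ both (a) and the cases $0\leq n\leq m-2$ of (b) follow immediately from $j_m$ being fully faithful ($j_m^\ast j_{m,!}\cong\id$, and all the objects and sub-squares involved lie in its image). So the entire content is the single new square $(i'_{m-1})^\ast Y_m$, supported on $(m-1,m-1)$, $(m,m-1)$, $(m-1,m)$, $(m,m)$.

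Here the key computation is that, by Der4, $(m,m)^\ast Y_m=(m,m)^\ast j_{m,!}Y_{m-1}\cong\pi_{P,!}\operatorname{pr}^\ast Y_{m-1}=\colim_P\,(Y_{m-1}|_P)$, where $P:=(V'_{m-1})_{/(m,m)}$ is, $V$ being a poset, the full subposet of $V'_{m-1}$ on the objects $\leq(m,m)$. An elementary inspection shows that $(m,m-1)$ and $(m-1,m)$ are the only objects of $P$ not lying below $(m-1,m-1)$; it follows that the inclusion of the cospan $\ll\cong\{(m,m-1)\leftarrow(m-1,m-1)\to(m-1,m)\}$ into $P$ admits a left adjoint (fixing $(m,m-1)$, $(m-1,m)$ and sending everything else to the apex $(m-1,m-1)$), so this inclusion is a right adjoint functor, hence homotopy final by \cite[Proposition~1.18]{Gro13}. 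Thus $(m,m)^\ast Y_m$ is the pushout of $(m,m-1)^\ast Y_{m-1}\leftarrow(m-1,m-1)^\ast Y_{m-1}\to(m-1,m)^\ast Y_{m-1}$, which by (a) is $0\leftarrow(m-1,m-1)^\ast Y_{m-1}\to 0$ (the objects $(m,m-1),(m-1,m)$ lie in $\partial V$, where $Y$ vanishes); so this pushout is $\Sigma\bigl((m-1,m-1)^\ast Y_{m-1}\bigr)$, and iterating identifies $(Lx)_n\cong\Sigma^n x$ — the connective suspension prespectrum. To promote this to cocartesianness of the square $(i'_{m-1})^\ast Y_m$ one checks that the counit $i_{\ll,!}i_\ll^\ast(i'_{m-1})^\ast Y_m\to(i'_{m-1})^\ast Y_m$ is an isomorphism; since $i_\ll$ and $i_{\ll,!}$ are fully faithful this is detected at the terminal corner (Der2), where it becomes the canonical map from $\colim_\ll$ of the above cospan to $(m,m)^\ast Y_m$, which is precisely the homotopy-final comparison isomorphism just produced.

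I expect this last point to be the only real obstacle: one must verify that the comparison map detecting cocartesianness of the small square coincides with the cofinality isomorphism for $\ll\hookrightarrow P$, rather than being merely \emph{some} isomorphism. This requires unwinding the mates behind Der4 together with the cofinality statement, in the spirit of Construction~\ref{cons:coherentcounit}. Everything else — the explicit left adjoint witnessing homotopy finality, the reduction to finite Kan extensions, and the fully-faithfulness bookkeeping — is routine.
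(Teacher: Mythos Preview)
Your argument is correct, but the filtration by $V'_m$ is unnecessary overhead: the paper applies Groth's detection criterion \cite[Proposition~3.10]{Gro13} (quoted here as Proposition~\ref{prop:detectionlemma}) once, directly to the global inclusion $\iota\colon V'\to V$. That proposition says exactly that $i_n^\ast\iota_!Y$ is cocartesian provided $i_n(1,1)\notin\iota(V')$ and the induced functor $\ll\to(V\setminus i_n(1,1))_{/i_n(1,1)}$ has a left adjoint. The slice category one obtains is the paper's $V^{<n+1}$, which coincides with your $P$ (for $m=n+1$ the inclusion $(V'_{m-1})_{/(m,m)}\subset (V\setminus(m,m))_{/(m,m)}$ is an equality), and the left adjoint the paper writes down is precisely the one you construct. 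So the combinatorics are identical; what differs is only the packaging.

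The point you flag as the ``only real obstacle'' --- identifying the comparison map detecting cocartesianness with the cofinality isomorphism for $\ll\hookrightarrow P$ --- is exactly the content of Proposition~\ref{prop:detectionlemma}, whose proof in \cite{Gro13} carries out that mate calculation once and for all. So rather than redo it, you may simply invoke that proposition for $j_m\colon V'_{m-1}\to V'_m$; but then the step-by-step filtration buys you nothing over invoking it once for $\iota\colon V'\to V$. Your inductive approach does have the minor expository advantage of making the identification $(Lx)_n\cong\Sigma^n x$ emerge along the way, but the paper obtains this just as easily after the fact from the cocartesianness.
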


We have not discussed the notion of cartesian and cocartesian squares in general in this paper. Such a discussion can be found at \cite[Definition~3.9]{Gro13} and the subsequent paragraphs. However, we only care about one type of cartesian and cocartesian square. Suppose we have the square
\begin{equation*}
\vcenter{\xymatrix@R=1em@C=1em{
0\ar[r]&b\\
a\ar[u]\ar[r]&0\ar[u]
}}
\end{equation*}
The square is cartesian if and only if the induced map $a\to \Omega b$ is an isomorphism (see Diagram~\ref{dia:comparisonmorphism}). This square is cocartesian if and only if the induced map $\Sigma a \to b$ is an isomorphism. These are the only types of (co)cartesian squares that will appear in this paper.

\begin{proof}
We use the following criterion from Groth. For $K\in\Dia$, define a \emph{square in $K$} to be a functor $i\colon\square\to K$ which is injective on objects.
\begin{prop}\label{prop:detectionlemma}{\cite[Proposition~3.10]{Gro13}}
Let $i\colon \square\to J$ be a square in $J$ and let $f\colon K\to J$ be a functor. Assume that the induced functor $\ll\to (J\setminus i(1,1))_{/i(1,1)}$ has a left adjoint and that $i(1,1)$ does not lie in the image of $f$. Then for all $Y\in\D(K)$, $X=f_!Y\in\D(J)$, the square $i^\ast X$ is cocartesian.
\end{prop}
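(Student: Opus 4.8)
The plan is to reduce the cocartesianness of $i^\ast X$ to a single comparison at the corner $(1,1)$ of $\square$ and then to recognize that comparison as an instance of homotopy finality coming from the adjointness hypothesis. First I would unwind the definition: by \cite[Definition~3.9]{Gro13}, $i^\ast X\in\D(\square)$ is cocartesian exactly when the counit $\varepsilon\colon(\iota_\ll)_!(\iota_\ll)^\ast(i^\ast X)\to i^\ast X$ is an isomorphism, where $\iota_\ll\colon\ll\to\square$ is the full inclusion. Since $\iota_\ll$ is fully faithful, $(\iota_\ll)_!$ is fully faithful by Proposition~\ref{prop:fullyfaithfulkanextension}, so the unit of the adjunction $((\iota_\ll)_!,(\iota_\ll)^\ast)$ is invertible and hence $(\iota_\ll)^\ast\varepsilon$ is invertible as well. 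By Der2 it therefore suffices to prove that $(1,1)^\ast\varepsilon$ is an isomorphism.

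Next I would use that $j:=i(1,1)$ avoids the image of $f$. Let $k'\colon J'\hookrightarrow J$ be the inclusion of the full subcategory $J'=J\setminus\{j\}$. Then $f$ factors as $f=k'f'$ for a unique functor $f'\colon K\to J'$, so $X=f_!Y=k'_!(f'_!Y)$ by functoriality of left Kan extensions, and $(k')^\ast X\cong f'_!Y=:X''$ since $k'_!$ is fully faithful. Likewise the restriction of $i$ to $\ll$ lands in $J'$ (as $i$ is injective on objects), and since each object of $\ll$ maps uniquely to $(1,1)$ in $\square$ that restriction factors canonically as $\ll\xrightarrow{\tilde m}(J\setminus j)_{/j}\xrightarrow{\operatorname{pr}}J'\xrightarrow{k'}J$, where $\tilde m$ is precisely the functor named in the statement.

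I would then apply Der4 twice. The pointwise formula for $k'_!$ at $j$ identifies $(1,1)^\ast i^\ast X=j^\ast k'_!X''$ with $(\pi_{(J\setminus j)_{/j}})_!\operatorname{pr}^\ast X''$. The pointwise formula for $(\iota_\ll)_!$ at $(1,1)$ — using that the slice of $\iota_\ll$ over $(1,1)$ computed in $\square$ is $\ll$ itself — identifies $(1,1)^\ast(\iota_\ll)_!(\iota_\ll)^\ast(i^\ast X)$ with $(\pi_\ll)_!\tilde m^\ast\operatorname{pr}^\ast X''$. Under these identifications $(1,1)^\ast\varepsilon$ becomes the canonical comparison morphism $(\pi_\ll)_!\tilde m^\ast(\operatorname{pr}^\ast X'')\to(\pi_{(J\setminus j)_{/j}})_!(\operatorname{pr}^\ast X'')$. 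Finally, by hypothesis $\tilde m$ admits a left adjoint, so it is a right adjoint functor; restriction along a right adjoint functor is homotopy final (\cite[Proposition~1.18]{Gro13}), whence this comparison morphism is an isomorphism for every $W\in\D((J\setminus j)_{/j})$, in particular for $W=\operatorname{pr}^\ast X''$. Thus $(1,1)^\ast\varepsilon$ is invertible and $i^\ast X$ is cocartesian.

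I expect the main obstacle to be the bookkeeping in the previous paragraph: one must check that the two applications of Der4, together with the factorization $i|_\ll=k'\circ\operatorname{pr}\circ\tilde m$, genuinely intertwine the counit $\varepsilon$ with the homotopy-finality comparison morphism. This is a compatibility-of-mates computation (\cite[Lemma~1.14]{Gro13}) rather than a conceptual difficulty, but it is the step that has to be written out with care. A minor secondary point is to confirm that the $\tilde m$ produced by the factorization really is the ``induced functor $\ll\to(J\setminus i(1,1))_{/i(1,1)}$'' of the hypothesis; this is immediate from the definition of that functor, but it deserves to be stated explicitly.
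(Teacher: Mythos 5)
Your argument is correct; note that the paper itself gives no proof of this statement—it is quoted from \cite[Proposition~3.10]{Gro13} and used as a black box in the proof of Lemma~\ref{lemma:cocartsquares}—so the relevant comparison is with Groth's original argument, and your route is essentially his: reduce to the $(1,1)$-component of the counit using full faithfulness of the left Kan extension along $\ll\to\square$ (Proposition~\ref{prop:fullyfaithfulkanextension}) together with Der2, factor $f$ through $J\setminus i(1,1)$, apply Der4 twice, and invoke homotopy finality of the right adjoint $\ll\to(J\setminus i(1,1))_{/i(1,1)}$. The mate-compatibility you flag is indeed the only remaining bookkeeping, and it goes through by \cite[Lemma~1.14]{Gro13} exactly as you indicate, so there is no gap.
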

For us, $J=V$, $K=V'$, $f$ is the inclusion $\iota$, and $i=i_n$. Let us investigate the indicated overcategory $(V\setminus~i_n(1,1))_{/i_n(1,1)}$. The objects of this category are objects $(i,j)\neq (n+1,n+1)$ along with a morphism $(i,j)\to (n+1,n+1)$. But since $V$ is a poset, we know that we either have a unique morphism $(i,j)\to (n+1,n+1)$ in the case that $i\leq n+1$ and $j\leq n+1$, or there is no such morphism. This makes the overcategory a subposet of our original poset $V'$. Therefore we introduce the notation
\begin{equation*}
V^{<n+1}:=\{(i,j)\in V:i\leq n+1,j\leq n+1, (i, j)\neq(n+1,n+1)\}
\end{equation*}
for the overcategory we wish to investigate. We now examine the functor \linebreak$r\colon \ll\to V^{<n+1}$ and construct a left adjoint $\ell$ to it explicitly.

Let $(i,j)$ denote an arbitrary object of $V^{<n+1}$, and let $(a,b)$ denote an arbitrary object of $\ll$. Then we would like
\begin{equation*}
\Hom_{V^{<n+1}}((i,j),r(a,b))\cong \Hom_{\ll}(\ell(i,j),(a,b)).
\end{equation*}
We know that $r(i,j)=(a+n,b+n)$, so we can characterise this hom-set as follows:
\begin{equation*}
\Hom_{V^{<n+1}}((i,j),r(a,b))=\begin{cases}
\ast&i\leq a+n\text{ and }j\leq b+n\\
\varnothing&i>a+n\text{ or }j>b+n
\end{cases}
\end{equation*}

If $i\leq n$ and $j\leq n$, we will be in the top case, so we will set $\ell(i,j)=(0,0)$ in that case. Since $(0,0)$ is initial in $\ll$, we know that there is always a unique morphism to $(a,b)$ no matter what.

Suppose that $(i,j)=(n,n+1)$. Then $\Hom_{V^{<n+1}}((n,n+1),(a+n,b+n))$ is nonempty if and only if $b=1$. That means that $\ell(n,n+1)$ should have a morphism to $(0,1)$ but not to $(1,0)$. This forces $\ell(n,n+1)=(0,1)$. Similarly, we can see that we need $\ell(n+1,n)=(1,0)$. This defines $\ell$ on every object and establishes a bijection of hom-sets. As this bijection is necessarily natural in both variables, we obtain the adjunction.

For any square $i_n\colon \square\to V$ as described above, we have that $i_n(1,1)=(n+1,n+1)$ is not in the image of $\iota\colon V'\to V$. Therefore the proposition applies, showing that $i_n^\ast \iota_! Y$ is cocartesian for any $n\in\N$ and any $Y\in\D(V')$. This in particular applies to $i_n^\ast Lx$, for $x\in\D$, completing the proof of the lemma.
\end{proof}

We can now write the underlying diagram of $Lx$. It is zero almost everywhere, except on the entires $(i,i)$ where $i\geq 0$. Write $X_i$ for $(i,i)^\ast Lx$, and note that $X_0\cong x$. We first restrict to the square $i_0\colon \square\to V$:
\begin{equation*}
\xymatrix@C=1em@R=1em{
0\ar[r]&X_1\\
x\ar[r]\ar[u]&0\ar[u]
}
\end{equation*}
Because this square is cocartesian, we have $X_1\cong \Sigma x$. Iterating this process, we see that $X_i\cong \Sigma^ix$. Therefore as we claimed above, $L\colon \D\to\Sp\D$ gives a (connective) suspension prespectrum on whatever object we start with.

\section{Stabilization}

We now want to give a formal definition of the subderivator of stable spectra in $\Sp\D$. To do so, we need a better definition than `all the structure maps are isomorphisms'. This is not too hard to do, but it does require some new diagram notation.

\begin{notn}
Recall the notation for the category $V$ of Definition~\ref{defn:prespectrum}. Let \linebreak$\sigma\colon V\to V$ be the functor defined by $(i,j)\mapsto (j+1,i+1)$. Since $\sigma(\partial V)\subset\partial V$ and $\sigma^\ast$ is a pointed morphism, we obtain an endomorphism $\sigma^\ast\colon \Sp\D\to\Sp\D$.
\end{notn}

\begin{notn}
For any diagram $J$, let $\Sp\D^J$ denote the derivator $\Sp\D$ shifted by~$J$, not the unstable spectrum derivator associated to $\D^J$. These are, in fact, exactly the same derivator, but we prefer the former interpretation for the following constructions.
\end{notn}

\begin{cons}\label{cons:w}
There exists a functor $w\colon \square\times V\to V$ such that $w^\ast$ restricts to a morphism $\Sp\D\to\Sp\D^\square$ and, for any $X\in\Sp\D$, we have
\begin{equation*}
\dia_\square(w^\ast X)\,=\vcenter{
\xymatrix@C=1em@R=1em{
0\ar[r]&\sigma^\ast X\\
X\ar[u]\ar[r]&0\ar[u]
}}
\end{equation*}
\end{cons}
We draw inspiration from Heller's construction of $w$ in \cite[\S8]{Hel97}. Throughout, $(i,j)$ will denote an object of $V$ and $(a,b)$ will denote an object of $\square$ (recall Notation~\ref{notn:square}). Let $\tau\colon\square\to\square$ be defined by $(a,b)\mapsto(b,a)$. We would like $w$ to satisfy the following three properties:

\begin{enumerate}
\item $w(a,b,i,i)=(i+a,i+b)$.
\item $w(a,b,i,j)\in\partial V$ for $i\neq j$.
\item $w\circ(\tau\times\,\sigma)=\sigma\circ w$.
\end{enumerate}

We will give the definition of $w$ first, followed by an illustration.
\begin{equation}\label{eq:wdef}
w(a,b,i,j)=\begin{cases}
(i,j)&(a,b)=(0,0)\\
(j+1,i+1)&(a,b)=(1,1)\\
(j+1,j)&(a,b)=(1,0)\\
(i,i+1)&(a,b)=(0,1)
\end{cases}
\end{equation}
To see this in action, let us restrict to the subset of $V$ where $i,j\in\{0,1,2,3\}$. Then the codomain of $w$ (given this restriction) we may write as
\begin{equation}\label{dia:codomainletters}
\vcenter{\xymatrix@C=1em@R=1em{
&& \iota\ar[r] & \kappa\\
& \zeta\ar[r] & \eta\ar[u]\ar[r] & \theta\ar[u]\\
\gamma\ar[r] & \delta\ar[r]\ar[u] & \varepsilon\ar[u]  &\\
\alpha\ar[r]\ar[u] & \beta\ar[u]&
}}
\end{equation}
where $\alpha=(0,0)$ and $\kappa=(3,3)$. We will restrict the domain to the subset of $V$ where $i,j\in\{0,1,2\}$ and write at each place $(a,b,i,j)\in\square\times V$ the element $w(a,b,i,j)\in V$ in Diagram~\ref{dia:codomainletters}. This gives us the underlying diagram of $w^\ast X$ for a coherent object $X\in\D^V$ with incoherent diagram as above. The starting picture, without decoration yet, is this:

\begin{equation*}
\xymatrix@R=1em@C=1em{
& \bullet\ar[r]& \bullet & {} & & \bullet\ar[r] & \bullet &\\
\bullet\ar[r] & \bullet\ar[r]\ar[u] & \bullet\ar[u] \ar@{}[rr]|(.33){}="A0"|(.66){}="A1"& {} & \bullet\ar[r] & \bullet\ar[r]\ar[u] & \bullet\ar[u]\\
\bullet\ar[r]\ar[u] & \bullet\ar[u]& & {} & \bullet\ar[r]\ar[u] & \bullet\ar[u]&\\
{}\\
& \bullet\ar[r]\ar@{}[uu]|(.33){}="C0"|(.66){}="C1" & \bullet & {} & & \bullet\ar[r]\ar@{}[uu]|(.33){}="D0"|(.66){}="D1" & \bullet &\\
\bullet\ar[r]& \bullet\ar[r]\ar[u] & \bullet\ar[u] \ar@{}[rr]|(.33){}="B0"|(.66){}="B1"& {} & \bullet\ar[r] & \bullet\ar[r]\ar[u] & \bullet\ar[u]\\
\bullet\ar[r]\ar[u] & \bullet\ar[u]& & {} & \bullet\ar[r]\ar[u] & \bullet\ar[u]&
\ar@{=>} "A0";"A1" \ar@{=>} "B0";"B1" \ar@{=>} "C0";"C1" \ar@{=>} "D0";"D1"
}
\end{equation*}

The bold arrows $\Rightarrow$ on each side represent the $\square$ dimension of the diagram, which connects the coherent subdiagrams in each corner. They do not define a morphism between the objects they might seem to, but orient the diagram as a whole. This means the lower left corner is $(0,0,0,0)$ and the upper right corner is $(1,1,2,2)$.

Criterion (1) fixes what happens along the diagonal of each corner of the square. The upper right and lower left corners of the square should have underlying diagrams $X$ and $\sigma^\ast X$ (respectively). We place these objects in the diagram and mark some more for further study.

\begin{equation*}
\vcenter{\xymatrix@R=1em@C=1em{
& \vartriangle_1\ar[r]& \iota & {} & & \theta\ar[r] & \kappa &\\
\bullet\ar[r] & \zeta\ar[r]\ar[u] & \star_1\ar[u] \ar@{}[rr]|(.33){}="A0"|(.66){}="A1"& {} & \varepsilon\ar[r] & \eta\ar[r]\ar[u] & \iota\ar[u]\\
\gamma\ar[r]\ar[u] & \bullet\ar[u]& & {} & \delta\ar[r]\ar[u] & \zeta\ar[u]&\\
{}\\
& \zeta\ar[r]\ar@{}[uu]|(.33){}="C0"|(.66){}="C1" & \eta & {} & & \star_2\ar[r]\ar@{}[uu]|(.33){}="D0"|(.66){}="D1" & \theta &\\
\gamma\ar[r]& \delta\ar[r]\ar[u] & \varepsilon\ar[u] \ar@{}[rr]|(.33){}="B0"|(.66){}="B1"& {} & \bullet\ar[r] & \varepsilon\ar[r]\ar[u] & \vartriangle_2\ar[u]\\
\alpha\ar[r]\ar[u] & \beta\ar[u]& & {} & \beta\ar[r]\ar[u] & \bullet\ar[u]&
\ar@{=>} "A0";"A1" \ar@{=>} "B0";"B1" \ar@{=>} "C0";"C1" \ar@{=>} "D0";"D1"
}}
\end{equation*}

Let us now examine the objects marked $\star$. We see that $\star_1$ must satisfy $\zeta\leq w(\star_1)$, $\varepsilon\leq w(\star_1)$, and $w(\star_1)\leq \iota$. By criterion (2), we must have $w(\star_1)\neq \eta$. This forces $w(\star_1)=\iota$. Similarly, $w(\star_2)=\theta$.

For the objects marked $\vartriangle$, we have $\zeta\leq w(\vartriangle_1)$, $w(\vartriangle_1)\leq \theta$, and $w(\vartriangle_1)\leq \iota$. Because $w(\vartriangle_1)\neq \eta$ by criterion (2), this forces $w(\vartriangle_1)=\zeta$. Similarly, $w(\vartriangle_2)=\varepsilon$. We can then define $w$ on the unadorned $\bullet$ objects by using criterion (3). Therefore we complete the picture:

\begin{equation*}
\vcenter{\xymatrix@R=1em@C=1em{
& \zeta\ar[r]& \iota & {} & & \theta\ar[r] & \kappa &\\
\gamma\ar[r] & \zeta\ar[r]\ar[u] & \iota\ar[u] \ar@{}[rr]|(.33){}="A0"|(.66){}="A1"& {} & \varepsilon\ar[r] & \eta\ar[r]\ar[u] & \iota\ar[u]\\
\gamma\ar[r]\ar[u] & \zeta\ar[u]& & {} & \delta\ar[r]\ar[u] & \zeta\ar[u]&\\
{}\\
& \zeta\ar[r]\ar@{}[uu]|(.33){}="C0"|(.66){}="C1" & \eta & {} & & \theta\ar[r]\ar@{}[uu]|(.33){}="D0"|(.66){}="D1" & \theta &\\
\gamma\ar[r]& \delta\ar[r]\ar[u] & \varepsilon\ar[u] \ar@{}[rr]|(.33){}="B0"|(.66){}="B1"& {} & \varepsilon\ar[r] & \varepsilon\ar[r]\ar[u] & \varepsilon\ar[u]\\
\alpha\ar[r]\ar[u] & \beta\ar[u]& & {} & \beta\ar[r]\ar[u] & \beta\ar[u]&
\ar@{=>} "A0";"A1" \ar@{=>} "B0";"B1" \ar@{=>} "C0";"C1" \ar@{=>} "D0";"D1"
}}
\longrightarrow\quad
\vcenter{\xymatrix@C=1em@R=1em{
&& \iota\ar[r] & \kappa\\
& \zeta\ar[r] & \eta\ar[u]\ar[r] & \theta\ar[u]\\
\gamma\ar[r] & \delta\ar[r]\ar[u] & \varepsilon\ar[u]  &\\
\alpha\ar[r]\ar[u] & \beta\ar[u]&
}}
\end{equation*}

As claimed, we have $(0,0)^\ast w^\ast X=X$ and $(1,1)^\ast w^\ast X=\sigma^\ast X$. Criterion (2) guarantees that $w(\square\times\partial V)\subset\partial V$, so the pointed morphism $w^\ast$ restricts to $\Sp\D$.

\begin{remark}\label{rk:wrongw}
Heller at \cite[p.127]{Hel97} uses a slightly different definition for $\sigma$, namely the `obvious' shift $s(i,j)=(i+1,j+1)$. He also replaces $\tau$ by $\id_\square$ in criterion~(3). However, there is no way to construct $w$ such that $(0,0)^\ast w^\ast X=X$ and $(1,1)^\ast w^\ast X=~s^\ast X$ satisfying $w\circ(\id_\square\times s)=s\circ w$ despite Heller's assertion (without proof) to the contrary.

To give a quick proof, restrict to $i,j\in\{0,1,2\}$. Then Heller's criterion (1) forces the following, where we emphasise the entry $(0,1,1,0)$ in particular:
\begin{equation*}
\vcenter{\xymatrix@R=1em@C=1em{
\bullet\ar[r]&\zeta&\zeta\ar[r]&\eta\\
\gamma\ar[r]_-{}="A1"\ar[u]&\star\ar[u]_-{}="B0"&\delta\ar[r]_-{}="D1"\ar[u]^-{}="B1"&\varepsilon\ar[u]\\
\gamma\ar[r]^-{}="A0"&\delta&\bullet\ar[r]^-{}="D0"&\varepsilon\\
\alpha\ar[r]\ar[u]&\beta\ar[u]_-{}="C0"&\beta\ar[r]\ar[u]^-{}="C1"&\bullet\ar[u]
\ar@{}"A0";"A1"|(.33){}="a0"|(.66){}="a1" \ar@{=>}"a0";"a1"
\ar@{}"B0";"B1"|(.33){}="b0"|(.66){}="b1" \ar@{=>}"b0";"b1"
\ar@{}"C0";"C1"|(.33){}="c0"|(.66){}="c1" \ar@{=>}"c0";"c1"
\ar@{}"D0";"D1"|(.33){}="d0"|(.66){}="d1" \ar@{=>}"d0";"d1"
}}\quad\longrightarrow\quad
\vcenter{\xymatrix@C=1em@R=1em{
&& \iota\ar[r] & \kappa\\
& \zeta\ar[r] & \eta\ar[u]\ar[r] & \theta\ar[u]\\
\gamma\ar[r] & \delta\ar[r]\ar[u] & \varepsilon\ar[u]  &\\
\alpha\ar[r]\ar[u] & \beta\ar[u]&
}}
\end{equation*}
We know that $w(\star)$ must satisfy $\beta\leq w(\star)\leq \varepsilon$ and $\gamma\leq w(\star)\leq \zeta$. This forces $w(\star)=\delta$, which contradicts criterion (2) that $w(\square\times\partial V)\subset\partial V$.

Note that on objects $X\in\Sp\D$, $s^\ast X$ and $\sigma^\ast X$ appear identical incoherently, though coherently they differ (subtly) because of the twist. Therefore we may continue following Heller's reasoning with this proper construction of $w$.
\end{remark}

What does this construction accomplish? Recall from the discussion following Definition~\ref{defn:prespectrum} that from the coherent square
\begin{equation}\label{dia:constructionofphi}
\vcenter{\xymatrix@C=1em@R=1em{
0\ar[r]&\sigma^\ast X\\
X\ar[u]\ar[r]&0\ar[u]
}}
\end{equation}
we obtain a canonical coherent map $(X\to\Omega\sigma^\ast X)$ that we will call $\phi_X$. This process is natural in $X$ and gives a morphism of derivators $\phi\colon \Sp\D\to\Sp\D^{[1]}$.

\begin{defn}\label{defn:spectrum}
An object $X\in\Sp\D$ is called an \emph{$\Omega$-spectrum} if the canonical map $\phi_X\colon X\to\Omega\sigma^\ast X$ is an isomorphism.
\end{defn}
Note that $(n,n)^\ast\phi_X$ is exactly the $n$th structure map of $X$, and all the structure maps are isomorphisms if and only if $\phi_X$ is by Der2.

Let $\cS_K\subset\Sp\D(K)$ be the full subcategory comprised of the $\Omega$-spectra. Note that each subcategory $\cS_K$ is replete, as an isomorphism $X\to Y$ in $\Sp\D(K)$ will give an isomorphism $\phi_X\to \phi_Y$ of coherent maps, so $X$ is an $\Omega$-spectrum if and only if $Y$ is.

Further, note that $\cS_K$ assemble to a prederivator $\St\D$. Suppose that \linebreak$X\in\St\D(K)$, and let $u\colon J\to K$ be a functor. Then because $\phi\colon \Sp\D\to\Sp\D^{[1]}$ is a morphism of derivators, $\phi_{u^\ast X}\cong u^\ast\phi_X$. Because $\phi_X$ is an isomorphism, so is $u^\ast\phi_X$ and hence $u^\ast X$ is an $\Omega$-spectrum as well.

Before stating the main theorem of this section, we need to define the second property of a derivator necessary for stabilization and prove a small lemma about it.

\begin{defn}
A derivator $\D$ is called \emph{regular} if filtered colimits commute with finite limits. That is, for any filtered category $C$ and finite category $D$, the canonical morphism
\begin{equation}\label{eqn:regular}
\pi_{C,!}(\id_C\times\pi_D)_\ast Y\to \pi_{D,\ast}(\pi_C\times\id_D)_! Y
\end{equation}
is an isomorphism for any $Y\in\D(C\times D)$, where $\pi_C,\pi_D$ are the projections to the final category.
\end{defn}

In a stable derivator, all colimits commute with finite limits; see \cite[Theorem~3.15]{Gro16b}. Hence we can view regularity as a test of pre-stability. Requiring that our derivators be regular is not too restrictive in practice. This condition is satisfied for an enormous class of examples, broadest of all being locally presentable categories; see \cite[Proposition~1.59]{AdaRos94}. The statement of that proposition is for directed colimits, but the corollary to Theorem~1.5 in that reference shows there is no distinction. The same is true generally in any $n$-topos, for $n\in\N$ or $n=\infty$; see Example~7.3.4.7 in combination with the dual of Proposition~5.3.2.9 in \cite{Lur09}.

We will only need that sequential colimits commute with pullbacks for Lemma~\ref{lemma:regular}, and this is (more or less) Heller's original definition at \cite[\S5]{Hel88}. But there is no distinction in examples between sequential and filtered colimits and between pullbacks and finite limits. We pick this definition not because we must at the moment, but for a future application of stabilization to derivator K-theory, which we anticipate will require the stronger axiom.

\begin{prop}\label{prop:localizeregular}
Let $\D$ be a regular pointed derivator. Then $\Sp\D$ is also a regular pointed derivator.
\end{prop}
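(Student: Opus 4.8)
The plan is to show that $\Sp\D=\D^{V,\partial V}$ inherits regularity from $\D^V$, which in turn inherits it from $\D$. The first reduction is to observe that if $\D$ is regular then so is any shifted derivator $\D^A$; indeed, the regularity condition~\eqref{eqn:regular} for $\D^A$ at a diagram $C\times D$ is exactly the regularity condition for $\D$ at the diagram $C\times(D\times A)$, since $D\times A$ is still finite when $D$ is and since Kan extensions and restrictions in $\D^A$ along functors of the form $u\times\id_A$ are computed by the corresponding operations in $\D$ (using that $\pi_C\times\id_D$ and $\id_C\times\pi_D$ tensored with $A$ are still of the required projection form up to reordering factors). Actually we must be slightly careful: $D$ is required finite but $A=V$ is infinite, so $\D^V$ regular does \emph{not} follow from $\D$ regular by this argument. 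So the real content must be extracted from the localization, not from shifting.

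Instead, the key step is to use that $\Sp\D=\D^{V,\partial V}$ is a localization of $\D^V$ with localization morphism $L$ (Theorem~\ref{thm:vanishingloc}), together with the explicit description in Remark~\ref{rk:localizekanformula} of how Kan extensions in $\D^{V,\partial V}$ relate to those in $\D^V$. The cleanest route: recall from Remark~\ref{rk:localizekanformula} that for any functor $u\colon J\to K$ the restriction of $u_!$ and $u_\ast$ (computed in $\D^V$, i.e. levelwise in $\D(V\times -)$) already lands in $\D^{V,\partial V}$, because $i^\ast\colon\D^V\to\D^{\partial V}$ is continuous and cocontinuous and both Kan extensions of the zero object are zero. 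Hence left and right Kan extensions, as well as restrictions, in $\Sp\D$ are simply the restrictions of those in $\D^V$ to the vanishing subderivator. Therefore the comparison map~\eqref{eqn:regular} for $\Sp\D$ at $C\times D$ is literally the comparison map for $\D^V$ at the same diagram, evaluated on objects $Y\in\Sp\D(C\times D)\subset\D^V(C\times D)=\D(V\times C\times D)$.

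So it remains to show $\D^V$ satisfies~\eqref{eqn:regular} for all filtered $C$ and finite $D$, i.e. that $\pi_{C,!}(\id\times\pi_D)_\ast Y\to\pi_{D,\ast}(\pi_C\times\id)_!Y$ is an isomorphism for $Y\in\D(V\times C\times D)$. By Der2 it suffices to check this after applying $v^\ast$ for every object $v\in V$. Since $v^\ast$ commutes with $u_!$ along functors between $C,D$-shaped diagrams — more precisely, $v^\ast$ here means restriction along $\{v\}\times C\times D\hookrightarrow V\times C\times D$, which commutes with $\pi_{C,!}$, $(\id_C\times\pi_D)_\ast$, $\pi_{D,\ast}$, and $(\pi_C\times\id_D)_!$ because these are all Kan extensions along functors of the form $\id_V\times(\text{stuff})$ and restriction along $\{v\}\hookrightarrow V$ in the first factor commutes with them by the compatibility of mates with pasting (\cite[Lemma~1.14]{Gro13}) — we reduce to the statement that the comparison map for $\D$ itself at $C\times D$ is an isomorphism on $v^\ast Y\in\D(C\times D)$. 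That is precisely regularity of $\D$. I expect the main obstacle to be the bookkeeping in this last commutation: verifying rigorously that evaluating at $v\in V$ commutes past the filtered colimit $\pi_{C,!}$ and the finite limit $(\id_C\times\pi_D)_\ast$ simultaneously, which comes down to a careful application of Der4 (or the base-change isomorphisms for these product-projection functors) and the observation that all the functors in sight are "constant in the $V$-direction." Once this is set up, the proof is a short diagram chase; the pointedness half is immediate since $0\in\Sp\D(e)$ as noted after Definition~\ref{defn:spectrum}.

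\begin{proof}
Pointedness is immediate: the zero object of $\D(V)$ lies in $\D(V,\partial V)$, so $\Sp\D$ is a pointed derivator by the corollary to Theorem~\ref{thm:vanishingloc}.

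For regularity, first recall from Remark~\ref{rk:localizekanformula} that for any functor $u\colon J\to K$ the Kan extensions $u_!$ and $u_\ast$ computed in $\D^V$ (that is, the functors $(u\times\id_V)_!$ and $(u\times\id_V)_\ast$ on $\D(J\times V)\to\D(K\times V)$) send $\D^{V,\partial V}(J)$ into $\D^{V,\partial V}(K)$: since $i^\ast\colon\D^V\to\D^{\partial V}$ is both continuous and cocontinuous and $u_!,u_\ast$ are pointed, $i^\ast u_! X\cong u_! i^\ast X\cong u_!0\cong 0$ and likewise for $u_\ast$. Hence left Kan extensions, right Kan extensions, and restrictions in $\Sp\D$ are simply the corresponding operations of $\D^V$ restricted to the vanishing subderivator. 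Consequently, for a filtered category $C$ and a finite category $D$, the comparison morphism~\eqref{eqn:regular} for $\Sp\D$ at $C\times D$ coincides, on any $Y\in\Sp\D(C\times D)\subseteq\D(C\times D\times V)$, with the comparison morphism for $\D^V$ at $C\times D$ evaluated at the same $Y$. It therefore suffices to prove that $\D^V$ is regular.

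So let $Y\in\D^V(C\times D)=\D(C\times D\times V)$; we must show
\begin{equation*}
\pi_{C,!}(\id_C\times\pi_D)_\ast Y\longrightarrow \pi_{D,\ast}(\pi_C\times\id_D)_! Y
\end{equation*}
is an isomorphism in $\D^V(e)=\D(V)$, where now every functor is implicitly crossed with $\id_V$ and the Kan extensions and restrictions are those of $\D$. By Der2 it is enough to check this after applying $v^\ast$ for each object $v\colon e\to V$. Restriction along $v$ in the $V$-coordinate commutes with each of $\pi_{C,!}$, $(\id_C\times\pi_D)_\ast$, $\pi_{D,\ast}$ and $(\pi_C\times\id_D)_!$: each of these is a Kan extension along a functor of the form $f\times\id_V$, and the square expressing the commutation of such a Kan extension with $\id\times v$ is a pullback (indeed a product) square, whose mate is an isomorphism by the base change results used throughout (\cite[Proposition~1.24]{Gro13}), with the compatibility of mates with pasting (\cite[Lemma~1.14]{Gro13}) handling the iterated composites. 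Therefore $v^\ast$ of the displayed map is identified with the comparison morphism~\eqref{eqn:regular} for $\D$ at $C\times D$, evaluated at $v^\ast Y\in\D(C\times D)$. Since $\D$ is regular, this is an isomorphism. As this holds for every $v\in V$, Der2 gives that the original map is an isomorphism, so $\D^V$, and hence $\Sp\D$, is regular.
\end{proof}
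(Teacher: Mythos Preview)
Your proof is correct and follows essentially the same two-step structure as the paper: first show $\D^V$ is regular by checking pointwise at each $v\in V$ via Der2 and the bicontinuity of $v^\ast$, then deduce regularity of $\Sp\D$ from that of $\D^V$.

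The only real difference is in the second step. The paper argues via a commuting cube: the inclusion $\iota\colon\Sp\D\to\D^V$ is bicontinuous (since $\Sp\D$ is both a localization and a colocalization of $\D^V$), so the four side faces commute up to isomorphism, the back face commutes by regularity of $\D^V$, hence the front face commutes after applying the fully faithful $\iota_e$, and therefore already in $\Sp\D$. You instead invoke Remark~\ref{rk:localizekanformula} directly to say that the Kan extensions in $\Sp\D$ are literally the restrictions of those in $\D^V$, so the comparison map in $\Sp\D$ \emph{is} the comparison map in $\D^V$ on vanishing objects. Your route is marginally more direct, but it leans on the same underlying fact (closure of $\D^{V,\partial V}$ under both Kan extensions in $\D^V$) that makes the paper's cube commute; the two arguments are essentially reformulations of one another.
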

\begin{proof}
First, if $\D$ is regular, so is $\D^V$. Let us keep the notation of Equation~\ref{eqn:regular} but suppress $\id_C$ and $\id_D$ for simplicity. The comparison morphism for $Y\in~\D^V(C\times D)$ gives rise to an isomorphism after applying $(i,j)^\ast$ for any $(i,j)\in V$
\begin{equation*}
\vcenter{\xymatrix{
(i,j)^\ast\pi_{C,!}\pi_{D,\ast} Y\ar[r]^-\cong&
\pi_{C,!}\pi_{D,\ast}(i,j)^\ast Y\ar[r]^-\cong&
\pi_{D,\ast}\pi_{C,!} (i,j)^\ast Y\ar[r]^-\cong&
(i,j)^\ast \pi_{D,\ast}\pi_{C,!} Y,
}}
\end{equation*}
where the first and last isomorphisms follow from the bicontinuity of $(i,j)^\ast$ and the middle isomorphism from the regularity of $\D$. This means that the comparison morphism for $\D^V$ is a pointwise isomorphism, hence must be an isomorphism by Der2.

Now consider the inclusion $\iota\colon\Sp\D\to\D^V$. We use critically that a vanishing subderivator localization is also a colocalization, so that the inclusion preserves both limits and colimits. For $X\in\Sp\D(C\times D)$, we need to show that the below square commutes up to isomorphism:
\begin{equation*}
\vcenter{\xymatrix{
\Sp\D(C\times D)\ar[r]^-{\pi_{D,\ast}}\ar[d]_-{\pi_{C,!}}&\Sp\D(C)\ar[d]^-{\pi_{!,C}}\ar@{}[dl]|\swtrans\\
\Sp\D(D)\ar[r]_-{\pi_{D,\ast}}&\Sp\D(e)
}}
\end{equation*}

Using $\iota$, we can build a cube involving the derivator $\D^V$ (omitting the natural transformations on each face):
\begin{equation*}
\vcenter{\xymatrix@R=1em@C=1em{
&\D^V(C\times D)\ar[rr]\ar[dd]|!{[dr];[dl]}\hole&&\D^V(C)\ar[dd]\\
\Sp\D(C\times D)\ar[ur]^-{\iota_{C\times D}}\ar[rr]^(0.66){\pi_{D,\ast}}\ar[dd]_-{\pi_{C,!}}&&\Sp\D(C)\ar[ur]^-{\iota_C}\ar[dd]^(.33){\pi_{!,C}}\\
&\D^V(D)\ar[rr]|!{[ur];[dr]}\hole&&\D^V(e)\\
\Sp\D(D)\ar[ur]^-{\iota_D}\ar[rr]_-{\pi_{D,\ast}}&&\Sp\D(e)\ar[ur]_-{\iota_e}
}}
\end{equation*}
where the horizontal and vertical functors on the back face are the same Kan extensions as those on the front face. The back face commutes up to isomorphism because $\D^V$ is regular. Moreover, the top, bottom, left, and right faces commute up to isomorphism because $\iota$ is bicontinuous. This implies that the front face commutes up to isomorphism after applying $\iota_e$. But since this functor is fully faithful, the front face must already commute up to isomorphism and so we conclude that $\Sp\D$ is regular.
\end{proof}

\begin{theorem}\label{thm:stdlocalisation}
Let $\D$ be a regular pointed derivator. Then inclusion $i\colon\St\D\to~\Sp\D$ admits a left adjoint $\loc\colon\Sp\D\to\St\D$. That is, $\St\D$ is a localization of $\Sp\D$.
\end{theorem}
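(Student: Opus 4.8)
The plan is to realize $\St\D$ as the essential image of an idempotent localization endomorphism of $\Sp\D$ and then invoke Proposition~\ref{prop:locofders}. Recall from Construction~\ref{cons:w} and the discussion after Definition~\ref{defn:spectrum} that we have a morphism of derivators $\phi\colon\Sp\D\to\Sp\D^{[1]}$ whose source morphism is $\id_{\Sp\D}$ and whose target morphism is $T:=\Omega\sigma^\ast\colon\Sp\D\to\Sp\D$; equivalently, a modification $\phi\colon\id_{\Sp\D}\to T$. Two preliminary observations: first, $\sigma\colon V\to V$ is an automorphism of the poset $V$ (with inverse $(i,j)\mapsto(j-1,i-1)$), so $\sigma^\ast$ is an autoequivalence of $\Sp\D$ and in particular is bicontinuous, hence commutes with $\Omega$; second, by Proposition~\ref{prop:localizeregular} the derivator $\Sp\D$ is regular, so the finite homotopy limit $\Omega$ commutes with filtered colimits in $\Sp\D$. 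Combining these, the endomorphism $T=\Omega\sigma^\ast$ commutes with sequential (indeed filtered) colimits, and so does $\phi$, since both its source $\id$ and its target $T$ do.

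The technical heart of the proof is to upgrade $\phi$ to a \emph{coherent tower}: a morphism of derivators $\Phi\colon\Sp\D\to\Sp\D^\N$, where $\N=\{0\to1\to2\to\cdots\}$, sending $X$ to the coherent $\N$-indexed diagram
\[
X\xrightarrow{\phi_X}TX\xrightarrow{\phi_{TX}}T^2X\xrightarrow{\phi_{T^2X}}\cdots
\]
whose restriction to the edge $n\to n+1$ underlies $\phi$ evaluated at $T^nX$. I would carry this out by iterating the functor $w$ of Construction~\ref{cons:w}: gluing copies of $\square$ along their corners yields an infinite ``staircase of punctured squares'' $W$ in $\Dia$ together with a functor $W\times V\to V$ assembled from copies of $w$, which preserves the $\partial V$-vanishing condition because $w$ does; right Kan extending along the punctured-square inclusions as in Diagram~\ref{dia:comparisonmorphism} produces the iterated $\Omega$'s, and restricting to the resulting diagonal copy of $\N$ inside $W$ yields $\Phi$. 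As with Construction~\ref{cons:w} and Remark~\ref{rk:wrongw}, all of the care lies in writing down the correct functors between these infinite posets. I then set $L:=\pi_{\N,!}\circ\Phi\colon\Sp\D\to\Sp\D$, where $\pi_{\N}\colon\N\to e$, so that $LX=\colim_n T^nX$, and let $\eta\colon\id_{\Sp\D}\to L$ be the modification given by the canonical map from the $0$-th term of the tower into its colimit.

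Next I would identify the essential image of $L$ with $\St\D$. That $LX$ is an $\Omega$-spectrum for every $X$ is exactly where regularity is used: since $\phi$ commutes with sequential colimits, $\phi_{LX}\cong\colim_n\phi_{T^nX}\colon\colim_n T^nX\to\colim_n T^{n+1}X$, and this map is an isomorphism because the shift $\N\to\N$, $n\mapsto n+1$, is cofinal. Conversely, if $Z\in\St\D$ then $\phi_Z$ is an isomorphism; cancelling it from the naturality squares of $\phi$ shows $\phi_{T^nZ}=T^n\phi_Z$ is an isomorphism for all $n$, so the tower of $Z$ consists of isomorphisms and hence $\eta_Z\colon Z\to\colim_n T^nZ$ is an isomorphism. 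Since $\St\D$ is replete, the essential image of $L$ is therefore precisely $\St\D$, and we have also shown that $\eta_Z$ is an isomorphism for every $Z\in\St\D$, \ie that $\eta_L$ is an isomodification.

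It remains to verify that $L\eta$ is an isomodification and to conclude. Using once more that $T$ commutes with sequential colimits, one computes $L^2X=\colim_n T^n\!\left(\colim_m T^mX\right)\cong\colim_{(n,m)\in\N\times\N}T^{n+m}X\cong\colim_k T^kX=LX$, the last isomorphism induced by the cofinal diagonal $\N\to\N\times\N$; tracing the identifications shows $L\eta_X$ is this isomorphism, so $L\eta$ is an isomodification. Now $\Sp\D$ is a pointed derivator and $\St\D\subset\Sp\D$ is a replete, full subprederivator, so Proposition~\ref{prop:locofders} applies to the pair $(L,\eta)$ and produces a left adjoint $\loc\colon\Sp\D\to\St\D$ to the inclusion $i\colon\St\D\to\Sp\D$; Lemma~\ref{lemma:localisederivator} then re-confirms that $\St\D$ is a derivator. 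I expect the construction of the coherent tower $\Phi$ --- making rigorous sense of ``iterate $\phi$'' through explicit functors between infinite diagram categories --- to be the main obstacle; everything downstream of it is a telescope argument powered by the regularity of $\Sp\D$.
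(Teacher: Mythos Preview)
Your proposal is correct and follows essentially the same strategy as the paper: build a coherent $\N$-tower $\Phi$ (the paper's $F$) out of an infinite extension of the functor $w$, set $\loc=\pi_{\N,!}\Phi$, use regularity of $\Sp\D$ to identify the essential image with $\St\D$, and verify the idempotency conditions of Proposition~\ref{prop:locofders}. The paper carries out the tower construction via an explicit functor $\omega\colon V^{\geq 0}\times V\to V$ and an auxiliary shape $Z\subset V^{\geq 0}\times\N$ (Construction~\ref{cons:coherentdia}), and builds $\eta$ coherently through a fold functor $a\colon\N\times[1]\to\N$ before deducing that $\loc\eta$ is invertible from the invertibility of $\eta_{\loc}$; your double-colimit and diagonal-cofinality argument for $L\eta$ is a legitimate alternative to that last step, but you should be aware that making it rigorous still requires a coherent identification of $L\circ\Phi$ with an $\N\times\N$-diagram, which is where the paper's extra diagram-level care pays off.
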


We will give the proof of this theorem in a series of constructions, with the ultimate goal of an explicit formula for the localization. Heller again gives us an idea for what the adjoint $\loc\colon \Sp\D\to\St\D$ should be. We are looking for something of the form
\begin{equation}\label{eq:loc}
\loc X=\operatorname{hocolim}(X\to\Omega s^\ast X\to \Omega^2(s^\ast)^2X\to\cdots),
\end{equation}
where $s\colon V\to V$ is Heller's incorrect shift functor of Remark~\ref{rk:wrongw}. Heller gives this as a definition, but offers no way to obtain the \emph{coherent} diagram of which we would like to take the homotopy colimit. The content of the following construction is to repair these errors.

We know that $\phi_X$ gives us, more or less, the first stage of the homotopy colimit. We are free to iterate this process, and in fact for any $n\in\N$ we can obtain a coherent diagram
\begin{equation*}
X\to \Omega\sigma^\ast X\to (\Omega\sigma^\ast)^2X\to\cdots\to(\Omega\sigma^\ast)^nX.
\end{equation*}
However, we cannot end up with an infinite coherent diagram like the one we need though this induction.

\begin{cons}\label{cons:coherentdia}
There is a morphism of derivators $F\colon \Sp\D\to\Sp\D^\N$ given incoherently by
\begin{equation*}
X\mapsto (X\to\Omega\sigma^\ast X\to \Omega^2(\sigma^\ast)^2X\to\cdots).
\end{equation*}
\end{cons}
We will modify our functor $w\colon \square\times V\to V$ from Construction~\ref{cons:w} to give us an infinite diagram of the form we want. To begin with, let us define the subcategory $V^{\geq 0}\subset V$ of all $(i,j)$ such that $i,j\geq 0$. We can see $\square\subset V^{\geq 0}$ by using the obvious embedding of both in $\N^2$. We want to define a functor $\omega\colon V^{\geq 0}\times V\to V$ with the aim that, for every square $i_n\colon \square\to V^{\geq 0}$ given by $(a,b)\mapsto (a+n,b+n)$,
\begin{equation*}
(i_n)^\ast \omega^\ast X=(\sigma^\ast)^nw^\ast X.
\end{equation*}
That is, $\omega^\ast$ should be an infinite version of $w^\ast$.

Note that, by the cartesian closed structure on $\Cat$, the functor $w\colon \square\times V\to V$ corresponds canonically to a functor $V\to V^\square$. Specifically,
\begin{equation*}
(i,j)\mapsto w(-,-,i,j).
\end{equation*}
Rather than define $\omega$ as we did in Equation~\ref{eq:wdef}, we will define the functor $V\to V^{V^{\geq 0}}$ that corresponds to it. As we did above, we will write the functor $V^{\geq0}\to V$ by giving a diagram of shape $V^{\geq 0}$ with entries given by the value of the functor. To demonstrate this for $w$, our alternative definition $V\to V^\square$ is
\begin{equation*}
(i,j)\mapsto \vcenter{\xymatrix@R=1em@C=1em{
(i,i+1)\ar[r]&(j+1,i+1)\\
(i,j)\ar[r]\ar[u]&(j+1,j)\ar[u]
}}
\end{equation*}
so that $w\sigma^n$ for $n$ odd is defined by
\begin{equation*}
(i,j)\mapsto \vcenter{\xymatrix@R=1em@C=1em{
(j+n,j+n+1)\ar[r]&(i+n+1,j+n+1)\\
(j+n,i+n)\ar[r]\ar[u]&(i+n+1,i+n)\ar[u]
}}
\end{equation*}
and $w\sigma^n$ for $n$ even is defined by
\begin{equation*}
(i,j)\mapsto \vcenter{\xymatrix@R=1em@C=1em{
(i+n,i+n+1)\ar[r]&(j+n+1,i+n+1)\\
(i+n,j+n)\ar[r]\ar[u]&(j+n+1,j+n)\ar[u]
}}
\end{equation*}

Without further ado, $\omega$ will be defined by
\begin{equation*}
(i,j)\mapsto
\vcenter{\xymatrix@R=1em@C=1em{
&&&&{}\\
&&(i+2,i+3)\ar[r]&(j+3,i+3)\ar@.[ur]\\
&(j+1,j+2)\ar[r]&(i+2,j+2)\ar[r]\ar[u]&(j+3,j+2)\ar[u]\\
(i,i+1)\ar[r]&(j+1,i+1)\ar[u]\ar[r]&(i+2,i+1)\ar[u]\\
(i,j)\ar[r]\ar[u]&(j+1,j)\ar[u]
}}
\end{equation*}

It may be apparent why we chose this format rather than the other definition. Examination of the above diagram (or direct calculation) shows
\begin{equation*}
(i_n)^\ast\omega^\ast X\cong (\sigma^\ast)^n w^\ast X
\end{equation*}
as we had hoped.

We now need to construct the $\N$-shaped coherent morphism suggested by Equation~\ref{eq:loc}. First, we need to embed $\omega^\ast X$ into a larger diagram shape. Let $Z$ be the full subcategory of $V^{\geq 0}\times \N$ consisting of all $(i,j,k)$ for $i\neq j$ but restricting to $k\leq i$ if $i=j$.

There is a projection $\zeta\colon Z\to V^{\geq 0}$ that forgets the $k$ component. For $X\in\Sp\D$, we have that
\begin{equation*}
(i,j,k)^\ast\zeta^\ast \omega^\ast X=(i,j)^\ast\omega^\ast X
\end{equation*}
for any $k\in\N$, so $\zeta^\ast$ gives us a constant diagram functor of variable lengths depending on the value of $(i,j)$. To illustrate this, we draw $\zeta^\ast\omega^\ast X$ for $i,j,k\leq 2$:
\begin{equation}\label{dia:Z}
\vcenter{\xymatrix@R=1ex@C=1ex{
& & & & & & & {}\\
& & & & 0\ar[rr]\ar'[d][dd] & & (\sigma^\ast)^2 X\ar[dd]\ar@.[ur]\\
& 0\ar[rr]\ar'[d][dd] & & \sigma^\ast X\ar[dd]\ar[rr]\ar[ur] & & 0\ar[ur]\ar[dd]&&{}\\
X\ar[ur]\ar[rr] & & 0\ar[ur]\ar[dd] & & 0\ar'[r][rr]\ar'[d][dd] & & (\sigma^\ast)^2 X\ar[dd]\ar@.[ur]\\
& 0\ar'[r][rr]\ar[dd] & & \sigma^\ast X\ar[ur]\ar[rr] & & 0\ar[ur]\ar[dd]&&{}\\
& & 0\ar[ur]\ar[dd] & & 0\ar'[r][rr]\ar@.[d] & & (\sigma^\ast)^2 X\ar@.[ur]\\
& 0\ar[rrru]|!{[ur];[dr]}{\hole}\ar@.[d] & & & & 0\ar[ur]\ar@.[d]\\
& & 0\ar[rrru]\ar@.[d] & & & &\\
& & & & & & &
}}
\end{equation}

From this complicated shape, we will attempt to find our desired coherent diagram. Let $f\colon Z\to V^{\geq 0}\times \N$ be the natural embedding. The last step of our construction will be to take the right Kan extension $f_\ast$ and restrict to the subcategory $\{(0,0)\}\times \N$, which should give us the desired coherent diagram.

We will again use Groth's detection lemma to make sure that we obtain cartesian squares where we would like them. We state the dual of Proposition~\ref{prop:detectionlemma} for reference:
\begin{prop}\label{prop:cartdetectlemma}
Let $i\colon \square\to J$ be a square in $J$ and let $f\colon K\to J$ be a functor. Assume that the induced functor $\ur\to (J\setminus i(0,0))_{i(0,0)/}$ has a right adjoint and that $i(0,0)$ does not lie in the image of $f$. Then for all $X=f_\ast Y\in \D(J)$, $Y\in\D(K)$, the square $i^\ast X$ is cartesian.
\end{prop}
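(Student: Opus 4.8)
The plan is to obtain this as a formal consequence of Proposition~\ref{prop:detectionlemma}, since --- as the surrounding text indicates --- it is precisely the dual statement. Recall that every derivator $\D$ has an opposite derivator $\D\op$, given by $\D\op(K)=\D(K\op)\op$; this is again a derivator (the only point requiring attention is Der4, which for $\D\op$ reduces to Der4 for $\D$ with the roles of slice and coslice categories exchanged), and under this operation left and right homotopy Kan extensions are interchanged: $u_!$ computed in $\D\op$ is $u_\ast$ computed in $\D$, and conversely. Consequently a cartesian square in $\D$ is the same datum as a cocartesian square in $\D\op$; an undercategory $C_{x/}$ becomes an overcategory $(C\op)_{/x}$; ``admits a right adjoint'' becomes ``admits a left adjoint''; and the poset $\square$ is self-dual via $(a,b)\mapsto(1-a,1-b)$, an identification exchanging the corners $(0,0)$ and $(1,1)$ and hence the full subcategories $\ll$ and $\ur$. (Implicitly one assumes here that $\Dia$ is closed under $(-)\op$, which holds for the choices $\mathbf{Pos}$ and $\Cat$ fixed at the end of \S2.)

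Carrying this out: given $i\colon\square\to J$ and $f\colon K\to J$ as in the statement, one forms in $\D\op$ the functor $f\op\colon K\op\to J\op$ and the square $i\op\colon\square\to J\op$ (using the self-duality of $\square$ to retain $\square$ as domain). The hypothesis that $i(0,0)$ is not in the image of $f$ is unchanged, since $f$ and $f\op$ have the same image on objects and $i(0,0)$ is now the final corner $i\op(1,1)$ of the opposite square. The hypothesis that the induced functor $\ur\to(J\setminus i(0,0))_{i(0,0)/}$ has a right adjoint becomes, upon taking opposites, the hypothesis that the induced functor $\ll\to(J\op\setminus i\op(1,1))_{/i\op(1,1)}$ has a left adjoint. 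These are exactly the hypotheses of Proposition~\ref{prop:detectionlemma} for the derivator $\D\op$, applied to the square $i\op$ and the functor $f\op$, so that proposition yields: for the object $X=f_\ast Y$, regarded via $\D\op(J\op)=\D(J)\op$ as $(f\op)_!Y$, the square $(i\op)^\ast X$ is cocartesian in $\D\op$. Since a cocartesian square in $\D\op$ is precisely a cartesian square in $\D$, this says that $i^\ast X$ is cartesian in $\D$ whenever $X=f_\ast Y$, which is the assertion.

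Thus there is no genuinely new mathematical content here beyond Proposition~\ref{prop:detectionlemma}; one could equally well mirror Groth's proof of that statement directly, but the duality route is cleaner. The step I would be most careful about is the translation of hypotheses and conclusion under $(-)\op$ --- in particular the orientation convention for $\square$, which in this paper is drawn with $(0,0)$ initial and $(1,1)$ final, so that the self-duality genuinely swaps $\ll$ with $\ur$ and slices-over with slices-under, and one must check that the right adjoint appearing in the hypothesis dualizes to the left adjoint as used in Proposition~\ref{prop:detectionlemma}. Once the standard fact that $(-)\op$ is an involution on the $2$-category of derivators interchanging $u_!$ and $u_\ast$ is in place, everything else is bookkeeping.
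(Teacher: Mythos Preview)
Your proposal is correct and matches the paper's approach: the paper simply introduces this proposition with the phrase ``We state the dual of Proposition~\ref{prop:detectionlemma} for reference'' and gives no proof, so your duality argument via the opposite derivator is exactly the content the paper leaves implicit. Your careful tracking of how the hypotheses translate under $(-)\op$ (in particular the swap $\ur\leftrightarrow\ll$, undercategory $\leftrightarrow$ overcategory, right adjoint $\leftrightarrow$ left adjoint, $f_\ast\leftrightarrow f_!$, cartesian $\leftrightarrow$ cocartesian) is precisely what is needed and is correct.
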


In our notation, we have $J=V^{\geq 0}\times \N$ and $K=Z$. The squares that we care about being cartesian are at a constant $k$-level, so let us use the notation $i_{n,l}\colon \square\to V^{\geq 0}\times\N$ for the inclusion $(a,b)\mapsto (a+n,b+n,l)$ where $l>n$. Let us examine the category $(V^{\geq 0}\times\N\setminus i_{n,l}(0,0))_{i_{n,l}(0,0)/}$. Because $V^{\geq 0}\times\N$ is a poset, this slice category will be a subcategory. In particular, it will be $(i,j,k)\in V^{\geq 0}\times\N$ such that $i,j\geq n$ but $(i,j)\neq (n,n)$ and $k\geq l$. Let us call this subcategory $Z^{> n}$.

We will construct the right adjoint $r$ directly, similar to the proof of Lemma~\ref{lemma:cocartsquares}. Let $(a,b)\in \ur$ and let $(i,j,k)\in Z^{>n}$. Call the induced functor $\ell\colon \ur\to Z^{>n}$. Then we want
\begin{equation*}
\Hom_{Z^{>n}}(\ell(a,b),(i,j,k))\cong\Hom_\ur((a,b),r(i,j,k)).
\end{equation*}
We know that 
\begin{equation*}
\Hom_{Z^{>n}}(\ell(a,b),(i,j,k))=\begin{cases}
\ast& i \geq a+n, j\geq b+n,\text{ and }k\geq l \\
\varnothing& \text{otherwise}
\end{cases}
\end{equation*}
The $k$ coordinate is irrelevant, since $k\geq l$ is always satisfied. Because $(a,b)=(1,1)$ is final in $\ur$, if $i\geq n+1$ and $j\geq n+1$, we must have $r(i,j,k)=(1,1)$. Therefore we only need to consider $(n,n+1,k)$ and $(n+1,n,k)$. We see that $\ell(0,1)$ has a map to $(n,n+1,k)$ but not to $(n+1,n,k)$, so we must have $r(n,n+1,k)=(0,1)$, and similarly $r(n+1,n,k)=(1,0)$. This gives us the desired right adjoint, as again naturality of the hom-set bijection is automatic for posets.

Having established that all squares $i_{n,l}^\ast f_\ast \zeta^\ast\omega^\ast X$ are cartesian, we should see what this gives us in cash. Let $\widetilde X=f_\ast \zeta^\ast\omega^\ast X$ for ease of notation. If we look particularly at the cartesian square given by $i_{n-1,n}^\ast\widetilde X$,
\begin{equation*}
\xymatrix@C=1em@R=1em{
0\ar[r]&(\sigma^\ast)^{n}X\\
Y_1\ar[r]\ar[u]&0\ar[u]
}
\end{equation*}
we have $Y_1\cong \Omega(\sigma^\ast)^{n}X$. We can now examine the square $i_{n-2,n}^\ast\widetilde X$. Because we have established that $(n-1,n-1,n)^\ast\widetilde X\cong \Omega(\sigma^\ast)^n X$, this square is of the form
\begin{equation*}
\xymatrix@C=1em@R=1em{
0\ar[r]&\Omega(\sigma^\ast)^{n}X\\
Y_2\ar[r]\ar[u]&0\ar[u]
}
\end{equation*}
so that $Y_2\cong \Omega^2(\sigma^\ast)^n X$. Repeating this process shows that
\begin{equation*}
(0,0,n)^\ast\widetilde X\cong \Omega^n(\sigma^\ast)^n X.
\end{equation*}

The last step is to restrict to the column $\{(0,0,k)\}\subset V^{\geq 0}\times \N$, which is precisely the diagram we need. Hence we obtain coherently
\begin{equation}\label{eq:coherentomega}
X\to \Omega\sigma^\ast X\to \Omega^2(\sigma^\ast)^2X\to\cdots\to\Omega^n(\sigma^\ast)^nX\to\cdots
\end{equation}
where each of these maps is a shift of the map $\phi_X$ we developed for Definition~\ref{defn:spectrum}.

Recall that we named the functor sending $X$ to the diagram of Equation~\ref{eq:coherentomega} $F\colon\Sp\D\to\Sp\D^\N$. We let $\loc\colon \Sp\D\to\Sp\D$ be given by the composition
\begin{equation}\label{eq:locdefinition}
\Sp\D\overset{\omega^\ast}\longrightarrow
\Sp\D^{V^{\geq 0}}\overset{\zeta^\ast}\longrightarrow
\Sp\D^Z \overset{f_\ast}\longrightarrow
\Sp\D^{V^{\geq 0}\times\N}\overset{(0,0)^\ast}\longrightarrow
\Sp\D^\N\overset{\pi_!}\longrightarrow
\Sp\D,
\end{equation}
so that $\loc=\pi_!F$, where $\pi\colon\N\to e$ is the projection. We now need to make sure $\loc$ is a good start for our localization.

\begin{lemma}\label{lemma:regular}
The morphism $\loc\colon \Sp\D\to\Sp\D$ has essential image $\St\D$.
\end{lemma}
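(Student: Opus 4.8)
The plan is to verify the two inclusions ``$\loc X\in\St\D$ for every $X$'' and ``every object of $\St\D$ is isomorphic to $\loc X$ for some $X$'' separately; the first is where regularity of $\D$ is indispensable. Throughout I use that, by Equation~\ref{eq:locdefinition} and Construction~\ref{cons:coherentdia}, $\loc X=\pi_!FX$ is the sequential homotopy colimit along $\pi\colon\N\to e$ of the telescope $FX=\bigl(X\to\Omega\sigma^\ast X\to\Omega^2(\sigma^\ast)^2X\to\cdots\bigr)$, whose value at $n\in\N$ is $(\Omega\sigma^\ast)^nX$ and whose $n$-th structure map is $(\Omega\sigma^\ast)^n$ applied to $\phi_X$. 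Two facts about $\N$ enter repeatedly. First, since $\N$ has an initial object, $0\colon e\to\N$ is left adjoint to $\pi$, so $\pi^\ast\cong 0_!$ and hence $\pi_!\pi^\ast\cong\id$; moreover the counit $0_!0^\ast Y\to Y$ is, at each vertex $n$, the composite $Y_0\to Y_1\to\cdots\to Y_n$ of structure maps, so by Der2 any coherent $\N$-diagram with invertible structure maps is constant, i.e.\ isomorphic to $\pi^\ast$ of its value at $0$. Second, the shift $t\colon\N\to\N$, $n\mapsto n+1$, is homotopy cofinal (by \cite[Proposition~1.18]{Gro13} and its proof, dualised), so $\pi_!t^\ast\cong\pi_!$, and the shift transformation $FX\to t^\ast FX$ induced by the unique $2$-cell $\id_\N\Rightarrow t$ induces, together with this cofinality isomorphism, the identity on $\pi_!FX$.

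For the inclusion $\St\D\subseteq\operatorname{im}\loc$: if $X$ is an $\Omega$-spectrum then $\phi_X$ is an isomorphism, so every structure map $(\Omega\sigma^\ast)^n\phi_X$ of $FX$ is an isomorphism; hence $FX\cong\pi^\ast X$ by the first fact, and therefore $\loc X=\pi_!FX\cong\pi_!\pi^\ast X\cong X$. Since each $\cS_K$ is replete, every object of $\St\D$ thus lies in the essential image of $\loc$.

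For $\operatorname{im}\loc\subseteq\St\D$ we must show that $\phi_{\loc X}\colon\loc X\to\Omega\sigma^\ast\loc X$ is an isomorphism for every $X\in\Sp\D$. The key point is that the morphism $\phi\colon\Sp\D\to\Sp\D^{[1]}$ preserves left Kan extension along $\pi\colon\N\to e$: the comparison $\pi_!\phi_\N\to\phi_e\pi_!$ may be tested after whiskering with evaluation at the source and at the target of $[1]$, since these detect isomorphisms in $\Sp\D^{[1]}$ by Der2, and whiskering with the source yields the identity of $\pi_!$ (the source of $\phi$ is $\id_{\Sp\D}$), while whiskering with the target yields the comparison measuring whether $\Omega\sigma^\ast$ commutes with $\pi_!$. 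The latter is invertible because $\sigma^\ast$ is cocontinuous (Remark~\ref{rk:localizekanformula}) and $\Omega$, being built from the cocontinuous extension-by-zero $(1,1)_!$ and a pullback, commutes with sequential colimits as $\Sp\D$ is regular (Proposition~\ref{prop:localizeregular}). Consequently $\phi_{\loc X}\cong\pi_!\phi_{FX}$. Now $\phi_{FX}$ is the map of $\N$-diagrams obtained by applying $\phi$ levelwise; at level $n$ it is $\phi_{(\Omega\sigma^\ast)^nX}=(\Omega\sigma^\ast)^n\phi_X$, that is, exactly the $n$-th structure map of $FX$ — here one uses that $\phi$ commutes with $\Omega$ (all operations involved being (co)limits in mutually independent variables) and with $\sigma^\ast$ (by property~(3) of $w$ from Construction~\ref{cons:w}, which accounts for the twist in $\sigma$). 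Hence $\phi_{FX}\colon FX\to t^\ast FX$ is precisely the shift transformation, so by the second fact $\pi_!\phi_{FX}$ is, up to the cofinality isomorphism, the identity on $\loc X$, and therefore $\phi_{\loc X}$ is an isomorphism. Thus $\loc X\in\St\D$, and combining the two inclusions the essential image of $\loc$ is $\St\D$.

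The main obstacle is the inclusion $\operatorname{im}\loc\subseteq\St\D$, and within it the two coherence points: that $\phi$ preserves the homotopy colimit $\pi_!$ — this is exactly where regularity of $\D$, hence of $\Sp\D$, is used — and that $\phi$ commutes with $\Omega$ and $\sigma^\ast$, so that $\phi_{FX}$ is visibly the shift transformation whose colimit is an isomorphism. Everything else is formal manipulation with adjunctions and Der2.
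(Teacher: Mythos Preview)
Your proof is correct and follows essentially the same strategy as the paper's: both establish the easy inclusion $\St\D\subseteq\operatorname{im}\loc$ by observing $FX\cong\pi^\ast X$ for $\Omega$-spectra, and both reduce $\phi_{\loc X}$ to a shift map on the sequential colimit, using regularity to commute $\Omega$ with $\pi_!$. The only cosmetic difference is the order of the two steps in the hard inclusion---the paper factors $\phi_{\loc X}$ as $\loc(\phi_X)$ (shown to be a shift isomorphism) followed by the regularity comparison $\loc\Omega\sigma^\ast\to\Omega\sigma^\ast\loc$, whereas you first invoke regularity to get $\phi_{\loc X}\cong\pi_!\phi_{FX}$ and then identify $\phi_{FX}$ with the shift transformation; both routes rest on the same coherence between $\phi$ and the structure maps of $F$.
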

\begin{proof}
Let $X\in\Sp\D$. We need to show that $\phi_{\loc X}\colon \loc X\to \Omega\sigma^\ast \loc X$ is an isomorphism. We do this by (incoherently) showing that it is a composition of isomorphisms:
\begin{equation*}
\vcenter{\xymatrix@C=3em{
\loc X\ar[r]^-{\loc (\phi_X)}\ar@(d,l)[dr]_{\phi_{\loc X}}&\loc\Omega\sigma^\ast X\ar[d]\\
&\Omega\sigma^\ast \loc X
}}
\end{equation*}
where the vertical map is the canonical comparison map.

We first prove that $\loc(\phi_X)\colon\loc X\to\loc\Omega\sigma^\ast X$ is an isomorphism. To first give the picture of $F\phi_X$ in $\Sp\D(\N\times[1])$,
\begin{equation*}
\vcenter{\xymatrix{
X\ar[r]\ar[d]&\Omega\sigma^\ast X\ar[r]\ar[d]&\Omega^2(\sigma^\ast)^2X\ar[r]\ar[d]&\cdots\\
\Omega\sigma^\ast X\ar[r]&\Omega^2(\sigma^\ast)^2 X\ar[r]&\Omega^3(\sigma^\ast)^3X\ar[r]&\cdots
}}
\end{equation*}
The horizontal morphisms and the vertical morphisms with the same domain and codomain are precisely the same map. Therefore $\loc(\phi_X)$ is the shift map between two sequential colimits, which is an isomorphism as the value of the sequential colimit does not depend on what finite stage we start at.

We now need to prove that $\loc$ commutes with $\Omega\sigma^\ast$, and do this we must look at the construction of $\loc$. Because $\sigma^\ast$ is a pullback morphism, we know that $\Omega\sigma^\ast\loc X\cong\Omega\loc\sigma^\ast X$. On $\Sp\D$, $\Omega$ is a continuous morphism, so it commutes with right Kan extensions as well as pullback morphisms. Therefore the only sticking point is whether $\Omega$ commutes with the sequential colimit $\pi_!\colon \Sp\D^\N\to\Sp\D$.

Recall from Definition~\ref{defn:suspensionloop} how $\Omega$ is constructed. It is the composition
\begin{equation*}
\xymatrix{
\Sp\D\ar[r]^-{(1,1)_!}&\Sp\D^\ur\ar[r]^-{i_{\ur,\ast}}&\Sp\D^\square\ar[r]^-{(0,0)^\ast}&\Sp\D.
}
\end{equation*}
Investigating this, truly the only sticking point is commuting with the middle morphism. But since we have assumed $\D$ is regular, $\Sp\D$ is also regular by Proposition~\ref{prop:localizeregular}. If the sequential (hence filtered) colimit $\pi_!$ commutes with finite limits, it also commutes with $i_{\ur,\ast}$ using Der4. Hence $\phi_{\loc X}$ is an isomorphism and $\loc X\in\St\D$.

For $Y\in\St\D$, $\loc Y\cong~Y$ as each of the morphisms $\Omega^{n}(\sigma^\ast)^n Y\to\Omega^{n+1}(\sigma^\ast)^{n+1}Y$ in $F(Y)$ is an isomorphism, and the colimit of a sequence of isomorphisms is again an isomorphism. Therefore the essential image of $\loc$ is all of $\St\D$.
\end{proof}

We have one more lemma to conclude the theorem.

\begin{lemma}\label{lemma:locisaloc}
There exists a modification $\eta\colon\id_{\Sp\D}\to\loc$ such that $\loc$ is a localization morphism.
\end{lemma}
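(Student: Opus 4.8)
The plan is to produce $\eta$ as the coprojection of the initial stage into the sequential colimit defining $\loc$, and then to verify condition~(1) of Proposition~\ref{prop:locofders}. Recall from Equation~\ref{eq:locdefinition} that $\loc=\pi_!F$, where $\pi\colon\N\to e$ and $F\colon\Sp\D\to\Sp\D^\N$ is the morphism of Construction~\ref{cons:coherentdia}, whose value on $X$ has underlying $\N$-diagram the telescope $X\to\Omega\sigma^\ast X\to\Omega^2(\sigma^\ast)^2X\to\cdots$. Restricting that construction to the object $(0,0,0)$ shows $0^\ast F\cong\id_{\Sp\D}$, where $0\colon e\to\N$ picks out the initial object; and since $\pi_!\colon\Sp\D^\N\to\Sp\D$ is a left adjoint morphism of derivators, the unit $\upsilon\colon\id_{\Sp\D^\N}\to\pi^\ast\pi_!$ of $(\pi_!,\pi^\ast)$ is a modification. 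I would then take $\eta$ to be the composite modification $\id_{\Sp\D}\cong 0^\ast F\to 0^\ast\pi^\ast\pi_!F\cong\pi_!F=\loc$, where the middle arrow is $0^\ast$ applied to the whiskering of $\upsilon$ by $F$ and the outer isomorphisms use $0^\ast F\cong\id$ and $0^\ast\pi^\ast\cong\id$; concretely $\eta_X$ is the coprojection of the $n=0$ term of $F(X)$ into $\loc X$. By Lemma~\ref{lemma:regular} the essential image of $\loc$ is $\St\D$, which is a replete, full subprederivator of $\Sp\D$ (repleteness was recorded after Definition~\ref{defn:spectrum}), so by Proposition~\ref{prop:locofders} it remains to check that $\eta_\loc$ and $\loc\eta$ are isomodifications.

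For $\eta_\loc$: fix $Y\in\St\D$, for instance $Y=\loc X$. As observed in the proof of Lemma~\ref{lemma:regular}, every morphism in the telescope $F(Y)=(Y\to\Omega\sigma^\ast Y\to\cdots)$ is an isomorphism, and the coprojection of the first term of a sequential diagram of isomorphisms into its colimit is an isomorphism; hence $\eta_Y$ is an isomorphism and $\eta_\loc$ is an isomodification.

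The step for $\loc\eta$ is the main obstacle, and I expect it to run parallel to the proof in Lemma~\ref{lemma:regular} that $\loc(\phi_X)$ is an isomorphism. Since $\D$---hence $\Sp\D$---is regular (Proposition~\ref{prop:localizeregular}), every functor $\Omega^n(\sigma^\ast)^n$ commutes with the sequential colimit $\pi_!$, by the same reasoning applied there to $\Omega$. Therefore the $n$-th term $\Omega^n(\sigma^\ast)^n\loc X$ of the telescope $F(\loc X)$ is canonically the colimit $\colim_m\Omega^{n+m}(\sigma^\ast)^{n+m}X$ of the sub-telescope of $F(X)$ beginning at stage $n$, and, tracing through Construction~\ref{cons:coherentdia}, the morphism $F(\eta_X)\colon F(X)\to F(\loc X)$ is, in each degree $n$, the coprojection of stage $n$ of $F(X)$ into that colimit. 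Applying $\pi_!=\colim_n$, using the Fubini identity for iterated colimits and cofinality of the addition functor $\N\times\N\to\N$ to collapse the resulting double colimit, one identifies $\loc\eta_X\colon\loc X=\colim_n\Omega^n(\sigma^\ast)^nX\to\colim_p\Omega^p(\sigma^\ast)^pX=\loc\loc X$ with a shift map between sequential colimits, hence an isomorphism since the value of a sequential colimit is insensitive to the stage at which it begins. Thus $\loc\eta$ is an isomodification.

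With both $\eta_\loc$ and $\loc\eta$ shown to be isomodifications, Proposition~\ref{prop:locofders} applies: $\loc$ is a localization morphism with essential image $\St\D$, and the levelwise inclusion $\St\D\to\Sp\D$ admits the left adjoint $\loc$. This also completes the proof of Theorem~\ref{thm:stdlocalisation}. The delicate part is the third paragraph, namely tracking the constructions of $F$ and $\pi_!$ closely enough inside the derivator formalism to recognize $\loc\eta_X$ as the claimed shift of sequential colimits; the two essential inputs there are the commutation of $\Omega^n(\sigma^\ast)^n$ with $\pi_!$, which is where regularity enters, and the cofinality of $\N\times\N\to\N$.
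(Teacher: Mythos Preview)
Your argument is correct and arrives at the same conclusion via the same criterion (Proposition~\ref{prop:locofders}), but both the construction of $\eta$ and the verification that $\loc\eta$ is invertible differ from the paper's route.

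For the construction of $\eta$, the paper does not use the unit of $(\pi_!,\pi^\ast)$ directly. Instead it introduces an auxiliary category $A$ (an $\N$ with an infinite tail of retractions adjoined to $0$) and a functor $a\colon\N\times[1]\to A\to\N$, so that $a^\ast F(X)\in\Sp\D^{\N\times[1]}$ is a \emph{coherent} morphism from the constant $\N$-diagram on $X$ to $F(X)$; pushing forward along $\pi_!$ yields a morphism $\mu\colon\Sp\D\to\Sp\D^{[1]}$ whose underlying incoherent modification is the desired $\eta$. Your construction via $0^\ast F\cong\id$ and the unit $\upsilon$ is equally valid and arguably more elementary, since it avoids inventing $A$; the paper's payoff for the extra work comes in the next step.

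For $\loc\eta$, the paper does not unfold the double sequential colimit. Having the coherent $\mu$ in hand, it observes that $a^\ast$ is bicontinuous and $F$ is a morphism of derivators, so there is a canonical isomorphism between the composites $\mu\loc$ and $\loc^{[1]}\mu$ in $\Hom(\Sp\D,\Sp\D^{[1]})$. Since $\eta_{\loc}$ was already shown to be an isomodification, $\mu\loc X$ is a coherent isomorphism for every $X$, hence so is $\loc^{[1]}\mu X$, and therefore $\loc\eta_X$ is an isomorphism. This sidesteps entirely the Fubini/cofinality manoeuvre with $+\colon\N\times\N\to\N$ that you use. Your direct approach is perfectly sound---regularity gives $\Omega^n(\sigma^\ast)^n\pi_!\cong\pi_!\Omega^n(\sigma^\ast)^n$, and the addition map is indeed homotopy final---but, as you note yourself, it requires tracking the identifications carefully enough to see that the resulting map is the canonical one. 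The paper's trick of reducing $\loc\eta$ to $\eta_{\loc}$ via the coherent $\mu$ is what the auxiliary category $A$ buys.
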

\begin{proof}
We construct $\eta\colon\id_{\Sp\D}\to\loc$ such that $\eta_{\loc}$ and $\loc\eta$ are isomodifications. Applying Proposition~\ref{prop:locofders}, we will obtain a localization with essential image $\St\D$, so the inclusion is right adjoint to $\loc$.

First, consider the category $A$ given by the following diagram:
\begin{equation*}
\xymatrix@C=1em{
\cdots \ar[r]<0.5ex>& 2'\ar[r]<0.5ex>\ar[l]<0.5ex> & 1'\ar[r]<0.5ex>\ar[l]<0.5ex> & 0'\ar[r]\ar[l]<0.5ex> & 0\ar[r] & 1\ar[r] & 2\ar[r] & \cdots
}
\end{equation*}
where the composition of any two stacked morphisms is the identity (in either direction). There is a natural projection $p\colon A\to\N$ onto the unmarked objects, where $p(n')=0$ by necessity. If we have an object $Y=(Y_0\overset{f_0}\longrightarrow Y_1\overset{f_1}\longrightarrow\cdots)\in\E(\N)$ in some (pre)derivator $\E$, then $p^\ast Y$ has the form
\begin{equation*}
\xymatrix{
\cdots \ar[r]<0.5ex>^=& Y_0\ar[r]<0.5ex>^=\ar[l]<0.5ex>^=& Y_0\ar[r]<0.5ex>^=\ar[l]<0.5ex>^= & Y_0\ar[r]^=\ar[r]<-0.1ex>\ar[l]<0.5ex>^= & Y_0\ar[r]^{f_0} & Y_1\ar[r]^{f_1} & Y_2\ar[r] & \cdots
}
\end{equation*}

We will fold this diagram along the morphism $0'\to 0$, bolded above. Note that there is exactly one morphism $n'\to n$ for every $n$, and so we may rewrite our diagram with this in mind.
\begin{equation*}
\vcenter{\xymatrix{
Y_0\ar[r]^{f_0}&Y_1\ar[r]^{f_1}&Y_2\ar[r]&\cdots\\
Y_0\ar[u]^=\ar[r]<0.5ex>^=&Y_0\ar[u]^{f_0}\ar[r]<0.5ex>^=\ar[l]<0.5ex>^=&Y_0\ar[u]^{f_1f_0}\ar[r]<0.5ex>^=\ar[l]<0.5ex>^=&\cdots\ar[l]<0.5ex>^=
}}
\end{equation*}
If we now forget the leftward arrows on the bottom row, we obtain the category $\N\times[1]$. This gives us in total a functor $a\colon \N\times[1]\to A\to \N$. Now, recall that $\loc = \pi_! F$ and let $X\in\Sp\D$. Then $F(X)\in\Sp\D^\N$, and so we may apply $a^\ast$ to obtain an object in $\Sp\D^{\N\times[1]}$, \ie a coherent morphism of $\N$-shaped diagrams. To examine this more closely, recall that $F(X)$ has underlying diagram
\begin{equation*}
\xymatrix{
X\ar[r]^-{f_0}&\Omega\sigma^\ast X\ar[r]^-{f_1}&\Omega^2(\sigma^\ast)^2 X\ar[r]&\cdots
}
\end{equation*}
Then $a^\ast F(X)$ has underlying diagram
\begin{equation}\label{dia:coherentunit}
\vcenter{\xymatrix{
X\ar[r]^-{f_0}&\Omega\sigma^\ast X\ar[r]^-{f_1}&\Omega^2(\sigma^\ast)^2 X\ar[r]&\cdots\\
X\ar[u]^=\ar[r]^=&X\ar[u]^{f_0}\ar[r]^=&X\ar[u]^{f_1f_0}\ar[r]^=&\cdots
}}
\end{equation}
Applying $\pi_!$ gives rise to a morphism of derivators $\mu=\pi_!a^\ast F\colon \Sp\D\to \Sp\D^{[1]}$, which we can view as a coherent modification. The underlying actual modification is $\dia\mu\colon s^\ast\mu\Rightarrow~t^\ast\mu$, where $s,t\colon e\to[1]$ are the source and target functors.

The functor $\pi\colon \N\to e$ admits a left adjoint, namely $0\colon e\to \N$ which classifies the initial object. By \cite[Proposition~1.18]{Gro13}, the canonical modification\linebreak $\varepsilon\colon\pi_!\pi^\ast\to\id_\E$ is an isomodification for any (left) derivator $\E$. In our case, this shows that $s^\ast\mu=\pi_!\pi^\ast\cong~\id_{\Sp\D}$. By construction $t^\ast\mu=\loc$, so we obtain a a modification $\eta\colon\id_{\Sp\D}\to \loc$ by precomposing $\dia\mu$ with the isomodification $\varepsilon\inv\colon\id_{\Sp\D}\to\pi_!\pi^\ast$.

Let $X\in\Sp\D$. We now need to show that $\eta_{\loc X}$ and $\loc\eta_X$ are both isomorphisms. For the first, because $\loc X$ is a stable spectrum, each of the maps $f_i$ in Diagram~\ref{dia:coherentunit} are isomorphisms. Therefore each of the vertical maps of $a^\ast F(X)$ are isomorphisms, and the colimit of isomorphisms is again an isomorphism. Therefore $\eta_{\loc X}$ is an isomorphism.

Now, we would like to compare the formulae for $\eta_{\loc}$ and $\loc\eta$. Specifically, we have a diagram
\begin{equation}\label{dia:muloc}
\vcenter{\xymatrix@R=1em{
&&\Sp\D\ar[r]^-F&\Sp\D^\N\ar@(r,ul)[dr]^-{a^\ast}\ar@{}[ddl]|\swtrans&&\\
\Sp\D\ar[r]^-F&\Sp\D^\N\ar@(ur,l)[ur]^-{\pi_!}\ar@(dr,l)[dr]_-{a^\ast}&&&\Sp\D^{\N\times[1]}\ar[r]^-{\pi_!^{[1]}}&\Sp\D^{[1]},\\
&&\Sp\D^{\N\times[1]}\ar[r]_-{\pi_!}&\Sp\D^{[1]}\ar@(r,dl)[ur]_-{F^{[1]}}&&
}}
\end{equation}
where the top composition is $\mu\loc$ and the bottom is $\loc^{[1]}\mu$. The middle transformation encodes the canonical isomorphism given by the zigzag
\begin{equation*}
\vcenter{\xymatrix{
a^\ast F\pi_! Y\ar[r]^-\cong &F^{[1]}a^\ast\pi_! Y& F^{[1]}\pi_!a^\ast Y\ar[l]_-\cong
}}
\end{equation*}
for any $Y\in\Sp\D^\N$ (as $a^\ast$ is bicontinuous). We obtain $\eta_{\loc}$ by precomposing $\dia(\mu\loc)$ by $\varepsilon\inv_{\loc}$ and $\loc\eta$ by precomposing $\dia(\loc^{[1]}\mu)$ by $\loc\varepsilon\inv$. Because $\varepsilon\inv_{\loc}$ and $\eta_{\loc}$ are isomorphisms, this means that $\mu\loc X$ is a coherent isomorphism for any $X\in\Sp\D$. But this means that $\loc^{[1]}\mu X$ is a coherent isomorphism as well by Diagram~\ref{dia:muloc}, so the composition $\loc\varepsilon\inv\dia(\loc^{[1]}\mu) X=\loc\eta_X$ is an isomorphism for any $X\in\Sp\D$.

Thus we have established that Proposition~\ref{prop:locofders} applies, and so complete the proof of Theorem~\ref{thm:stdlocalisation}.
\end{proof}

To complete our goal for this section, we need one more proposition.

\begin{prop}
$\St\D$ is a stable derivator.
\end{prop}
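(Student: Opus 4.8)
The plan is to show that $\St\D$, which we now know is a localization of $\Sp\D$ (hence a pointed derivator by Lemma~\ref{lemma:localisederivator} and the repleteness of $\cS_K$), is stable, \ie that $(\Sigma,\Omega)$ is an adjoint equivalence on $\St\D$. First I would record that since $\St\D\to\Sp\D$ is a localization, its left and right Kan extensions are computed by localizing those of $\Sp\D$ in the sense of Remark~\ref{rk:localizekanformula}; consequently $\Omega^{\St\D}$ is computed by applying $\loc$ to the $\Omega$ of $\Sp\D$, and similarly for $\Sigma$. More useful: because $i\colon\St\D\to\Sp\D$ is fully faithful and $\Omega$ on $\Sp\D$ preserves $\Omega$-spectra (as $\Omega\sigma^\ast\cong\sigma^\ast\Omega$ and $\Omega$ is continuous, so $\phi_{\Omega X}\cong\Omega\phi_X$ is an isomorphism when $\phi_X$ is), the loop functor of $\St\D$ is just the restriction of that of $\Sp\D$. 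The key structural input is the twist functor $\sigma^\ast\colon\Sp\D\to\Sp\D$ and its interaction with $\Omega$: on an $\Omega$-spectrum $X$, the defining map $\phi_X\colon X\to\Omega\sigma^\ast X$ is an isomorphism by Definition~\ref{defn:spectrum}.

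The central claim I would prove is that on $\St\D$, the functor $\Omega$ is an equivalence with inverse given (up to the twist) by $\sigma^\ast$ composed with $\Sigma$, or more cleanly: $\sigma^\ast$ is an autoequivalence of $\St\D$ (it is an autoequivalence of $\Sp\D$ since $\sigma^2 = \operatorname{shift}$ is invertible on $\Z^2$ — indeed $\sigma(i,j)=(j+1,i+1)$ so $\sigma^2(i,j)=(i+2,j+2)$, which is an isomorphism of the category $V$, hence $\sigma^\ast$ has quasi-inverse $(\sigma^\ast)^{-1}\cong(\sigma^2)^{\ast-1}\circ\sigma^\ast$), and it restricts to $\St\D$ since $\sigma(\partial V)\subset\partial V$ and $\sigma$ commutes with the structure-map construction. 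Then the isomorphism $\phi_X\colon X\xrightarrow{\sim}\Omega\sigma^\ast X$, natural in $X\in\St\D$, exhibits $\Omega\circ\sigma^\ast$ as naturally isomorphic to the identity on $\St\D$. Since $\sigma^\ast$ is an equivalence, this forces $\Omega$ to be an equivalence on $\St\D$, with quasi-inverse $\sigma^\ast\circ(\text{something})$; and because $(\Sigma,\Omega)$ is already an adjunction of endomorphisms of the pointed derivator $\St\D$, an adjunction in which the right adjoint $\Omega$ is an equivalence is automatically an adjoint equivalence (the unit and counit are then isomorphisms by a standard argument with the triangle identities). That last step is purely formal.

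I would therefore organize the proof as: (i) $\St\D$ is a pointed derivator — immediate from the corollary-style consequence of Lemma~\ref{lemma:localisederivator} together with repleteness of the $\cS_K$; (ii) $\sigma^\ast$ restricts to an autoequivalence of $\St\D$ — check that $\sigma$ preserves $V$, $\partial V$, and that $\sigma^2$ is an isomorphism of $V$, so $\sigma^\ast$ is invertible, and that $\sigma^\ast$ of an $\Omega$-spectrum is an $\Omega$-spectrum using $\sigma^\ast\phi_X\cong\phi_{\sigma^\ast X}$ up to the identification $\sigma\circ\sigma = \text{shift}$; (iii) $\Omega$ restricts to $\St\D$ — check $\phi_{\Omega Y}$ is an isomorphism whenever $\phi_Y$ is, using continuity of $\Omega$ and $\Omega\sigma^\ast\cong\sigma^\ast\Omega$; (iv) the natural isomorphism $\phi\colon\id_{\St\D}\xrightarrow{\sim}\Omega\sigma^\ast$ of Definition~\ref{defn:spectrum}, combined with $\sigma^\ast$ being an equivalence, shows $\Omega$ is an equivalence on $\St\D$; (v) invoke the formal fact that an adjunction of endofunctors whose members are equivalences is an adjoint equivalence to conclude $(\Sigma,\Omega)$ is an adjoint equivalence.

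The main obstacle I anticipate is step (iv): being careful that the isomorphism $\phi_X\colon X\to\Omega\sigma^\ast X$ is genuinely a \emph{modification} (natural at all levels $K\in\Dia$, not just in $\D(e)$) and that "$\Omega$ has a quasi-inverse on $\St\D$" is upgraded correctly to "$\Omega$ is an equivalence of derivators" — one must check the quasi-inverse is a morphism of derivators, but since it is built from $\sigma^\ast$ (a pullback, hence a morphism of derivators) and $\Omega$'s being a levelwise equivalence with the comparison maps being isomodifications, this follows. Everything else is bookkeeping with the already-established localization machinery and the explicit combinatorics of $\sigma$ on $V$.
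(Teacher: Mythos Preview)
Your proposal is correct and follows essentially the same route as the paper: establish that $\St\D$ is a pointed derivator via the localization, observe that $\phi\colon\id_{\St\D}\to\Omega\sigma^\ast$ is an isomodification by definition of $\Omega$-spectra, note that $\sigma^\ast$ is an autoequivalence, and conclude that $\Omega$ is an equivalence, forcing $(\Sigma,\Omega)$ to be an adjoint equivalence. Your only detour is arguing invertibility of $\sigma^\ast$ via $\sigma^2$ being the shift, whereas the paper simply writes down $\sigma^{-1}(i,j)=(j-1,i-1)$ directly; your extra care about $\Omega$ restricting from $\Sp\D$ to $\St\D$ is a point the paper leaves implicit.
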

\begin{proof}
Lemma~\ref{lemma:localisederivator} tells us that $\St\D$ is a derivator. It is also clear that $0\in\Sp\D(e)$ satisfies $0\cong \Omega\sigma^\ast 0$, so $0\in\St\D(e)$ and $\St\D$ is pointed.

To show that $\St\D$ is stable, we can prove one of many equivalent formulations in \cite[Theorem~3.1]{Gro16b}. The easiest one for us would be to prove that $(\Sigma,\Omega)$ is an adjoint equivalence on $\St\D$.

Recall that $\id_{\St\D}\to\Omega\sigma^\ast$ is an invertible modification of endomorphisms of $\St\D$. Moreover, $\sigma^\ast$ is an automorphism of $\St\D$. The map of diagrams $\sigma\colon V\to V$ is invertible, namely by $(i,j)\mapsto (j-1,i-1)$. Precomposition with the modification above gives us $(\sigma^{-1})^\ast=(\sigma^\ast)^{-1}\to \Omega$ is an isomodification, hence $\Omega$ is an automorphism as well (and a fortiori an equivalence). This forces $\Sigma$ to be an equivalence as well.
\end{proof}
\begin{cor}\label{cor:shiftsuspension}
$\sigma^{\ast}$ and $\Sigma$ are canonically isomorphic in $\St\D$.
\end{cor}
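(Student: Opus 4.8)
The plan is to read off the desired isomorphism directly from data already in hand, namely the canonical comparison $\phi$ of Definition~\ref{defn:spectrum} and the adjoint equivalence $(\Sigma,\Omega)$ that we have just established on $\St\D$. No new diagram chasing should be needed; the whole point is that stabilizing has forced $\sigma^\ast$ and $\Omega^{-1}$ to coincide, and $\Omega^{-1}\cong\Sigma$ by the adjoint equivalence.

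First I would record that, when restricted to $\St\D$, the canonical comparison $\phi\colon\Sp\D\to\Sp\D^{[1]}$ becomes an \emph{invertible} modification $\nu\colon\id_{\St\D}\to\Omega\sigma^\ast$ of endomorphisms of $\St\D$: levelwise this is exactly the $\Omega$-spectrum condition of Definition~\ref{defn:spectrum}, and it is a modification because $\phi$ is a morphism of derivators. Next, since $\St\D$ is stable, $(\Sigma,\Omega)$ is an adjoint equivalence; write $\varepsilon\colon\Sigma\Omega\to\id_{\St\D}$ for its (invertible) counit. Whiskering $\nu$ by $\Sigma$ on the left and $\varepsilon$ by $\sigma^\ast$ on the right and composing then gives
\[
\Sigma\ \xrightarrow{\ \Sigma\nu\ }\ \Sigma\Omega\sigma^\ast\ \xrightarrow{\ \varepsilon\sigma^\ast\ }\ \sigma^\ast,
\]
an isomodification $\Sigma\to\sigma^\ast$. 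Whiskerings and composites of isomodifications are again isomodifications, so the only work here is bookkeeping, and because every arrow in sight is canonical the resulting isomorphism is canonical, which is precisely what the statement asks for.

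The step most deserving of care — and the closest thing to an obstacle — is the first one: checking that $\nu$ genuinely assembles into a modification of endomorphisms of the derivator $\St\D$ (not merely a levelwise family of natural isomorphisms), and that $\sigma^\ast$, $\Sigma$, and $\Omega$ are all being handled as honest endomorphisms of $\St\D$ so that whiskering makes sense. Both points are already implicit in the preceding material, where $\phi$ was constructed as a morphism $\Sp\D\to\Sp\D^{[1]}$ and $\sigma^\ast$ was shown to be an automorphism of $\St\D$, so in practice this amounts to citing those facts. As an alternative phrasing that avoids $\varepsilon$ altogether, one can observe that $\nu$ exhibits $\sigma^\ast$ as a two-sided quasi-inverse of $\Omega$ on $\St\D$ (the other triangle being $\sigma^\ast\Omega\cong\sigma^\ast(\sigma^\ast)^{-1}\cong\id_{\St\D}$, exactly as in the proof of the previous proposition), and a two-sided quasi-inverse of an equivalence is unique up to canonical isomorphism; since $\Sigma$ is also such a quasi-inverse, $\Sigma\cong\sigma^\ast$. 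I would keep the first phrasing in the text, since it exhibits the canonical isomorphism explicitly.
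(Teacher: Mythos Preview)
Your proposal is correct and takes essentially the same approach as the paper. The paper's proof is a single sentence invoking uniqueness of left adjoints to $\Omega$; your explicit composite $\Sigma \xrightarrow{\Sigma\nu} \Sigma\Omega\sigma^\ast \xrightarrow{\varepsilon\sigma^\ast} \sigma^\ast$ is precisely the standard construction of that canonical isomorphism between two left adjoints, and your alternative phrasing (both are quasi-inverses of $\Omega$) is exactly what the paper says.
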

In general any two adjoints to $\Omega$ must be canonically isomorphic, and the above shows that these are indeed two choices of left adjoint.

All in all we have two adjunctions
\begin{equation*}
\vcenter{\xymatrix{
\D\ar@/_1pc/[d]_L \\
\Sp\D\ar@/_1pc/[d]_{\loc}\ar@/_1pc/[u]_{(0,0)^\ast} \\
\St\D\ar@/_1pc/[u]_i
}}
\end{equation*}

\begin{defn}\label{defn:stabdefn}
Let $\D$ be a regular pointed derivator. The \emph{stabilization of $\D$} is the left adjoint morphism $\stab=\loc L\colon \D\to\St\D$ constructed above. Its right adjoint is $(0,0)^\ast i\colon\St\D\to\D$.
\end{defn}

\begin{remark}
While $\D\to\St\D$ is not a localization, the map $\D^V\to\Sp\D\to\St\D$ is a composition of localizations, hence is itself a localization. Therefore we can think of the stabilization of a derivator in terms of a localization of a certain diagram category on that derivator.

Alternatively, there is another model of $\St\D$ as $\Sigma$-spectra and a colocalization $\Sp\D\to\St\D$ onto the $\Sigma$-spectra. This would give us a different stabilization morphism $\stab_\Sigma\colon\D\to\St\D$ for \emph{coregular} pointed derivators $\D$ which would be right adjoint to $(0,0)^\ast$. But because regularity seems a more natural condition (as far as examples go), we have chosen to prove everything through the lens of $\Omega$-spectra.
\end{remark}

\begin{remark}
Regularity may not be necessary for stabilization, as demonstrated in personal correspondence with Cisinski. Consider a combinatorial model category, which arise (up to Quillen equivalence) as Bousfield localizations of model categories of simplicial presheaves with pointwise weak equivalences by Dugger's presentation theorem \cite[Theorem~1.1]{Dug01}. Such categories of simplicial presheaves give rise to regular derivators, and work of Cisinski and Tabuada in \cite{Tab08} proves that left Bousfield localizations of model categories give rise to left Bousfield localizations of their corresponding derivators.

There is model-categorical machinery giving every combinatorial model category a canonical stabilization, which then passes to a stabilization (in some sense) on the derivator side of things as well. However, there is no reason to believe than an arbitrary combinatorial model category is still regular; Bousfield localizations of regular derivators need not be regular any longer. Therefore it is plausible that, in general, Bousfield localizations of regular derivators may still admit a canonical stabilization in the sense of this paper.
\end{remark}
\ \newline

\section{The universal property of stabilization}

We begin with a proposition to confirm that we have not developed an aberrant stabilization theory.

\begin{prop}\label{prop:stabonstable}
If $\bS$ is a stable derivator, then $\stab\colon\bS\to\St\bS$ is an equivalence of derivators.
\end{prop}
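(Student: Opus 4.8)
The plan is to show that the unit $\eta\colon\id_{\bS}\to(0,0)^\ast i\,\stab$ and the counit $\varepsilon\colon\stab\,(0,0)^\ast i\to\id_{\St\bS}$ of the adjunction $(\stab,(0,0)^\ast i)$ of Definition~\ref{defn:stabdefn} are isomodifications, since a morphism of derivators is an equivalence exactly when its unit and counit are invertible. As $\bS^K$ is again a stable derivator for every $K$ and the morphisms $L$, $\loc$, $\stab$ and their adjoints are built by constructions compatible with shifting, it suffices to argue levelwise, and I work below with an object ``$X\in\bS$'' in the usual sense; everything said about isomorphisms is meant levelwise. Two facts do all the work: in a stable derivator the unit $\id\to\Omega^{k}\Sigma^{k}$ of the adjoint equivalence $(\Sigma^{k},\Omega^{k})$ is invertible, and a sequential colimit $\pi_!\colon\bS^{\N}\to\bS$ depends only on a cofinal tail of the $\N$-diagram, so that a map of $\N$-diagrams which is an isomorphism past some stage induces an isomorphism on $\pi_!$.

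For the unit I would use the standard decomposition of the unit of a composite adjunction: for $x\in\bS$,
\[
\eta_x=\bigl((0,0)^\ast\eta^{\loc}_{Lx}\bigr)\circ\eta^{L}_x,
\]
where $\eta^{L}$ is the unit of $(L,(0,0)^\ast)$ and $\eta^{\loc}$ that of $(\loc,i)$. The factor $\eta^{L}_x\colon x\to(0,0)^\ast Lx=(Lx)_0$ is invertible because $L$ is fully faithful: from \S5 it is the composite of the Kan extensions $(0,0)_!$, $\iota_{\leq0,\ast}$, $\iota_!$ along fully faithful functors (a cosieve, a sieve, and a full inclusion), each fully faithful by Proposition~\ref{prop:fullyfaithfulkanextension}. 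The factor $(0,0)^\ast\eta^{\loc}_{Lx}$ is the canonical map $(Lx)_0\to(\loc Lx)_0$; using $\loc=\pi_!F$ and that $(0,0)^\ast$ is cocontinuous, this is the map out of stage $0$ into $\pi_!$ of the $\N$-diagram $(0,0)^\ast F(Lx)$, which by $(Lx)_i\cong\Sigma^{i}x$ ($i\geq0$) and the fact that the transition maps of $F$ are shifts of $\phi$ is the sequence
\[
x\longrightarrow\Omega\Sigma x\longrightarrow\Omega^{2}\Sigma^{2}x\longrightarrow\cdots
\]
with all transition maps invertible in the stable $\bS$; hence it is an isomorphism, and $\eta_x$ is invertible.

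For the counit the dual decomposition gives, for $X\in\St\bS$,
\[
\varepsilon_X=\varepsilon^{\loc}_X\circ\loc\bigl(\varepsilon^{L}_{iX}\bigr),
\]
with $\varepsilon^{\loc}_X\colon\loc iX\to X$ invertible because $i$ is fully faithful. It remains to see that $\loc(\varepsilon^{L}_{iX})$ is invertible. At spectrum-level $n$ the counit $\varepsilon^{L}_{iX}\colon L(X_0)\to iX$ is the canonical map $\Sigma^{n}X_0\to X_n$ for $n\geq0$ and $0\to X_n$ for $n<0$; for $n\geq0$ it is an isomorphism since $X$ is an $\Omega$-spectrum in a stable derivator, so its structure maps $X_{n-1}\to\Omega X_n$, equivalently $\Sigma X_{n-1}\to X_n$, are invertible. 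Applying $\loc=\pi_!F$ and evaluating at level $n$ yields $\pi_!$ of the map of $\N$-diagrams $k\mapsto\bigl(\Omega^{k}(\varepsilon^{L}_{iX})_{n+k}\colon\Omega^{k}(LX_0)_{n+k}\to\Omega^{k}X_{n+k}\bigr)$, which for $k\geq\max(0,-n)$ is $\Omega^{k}$ of an isomorphism; being an isomorphism on a cofinal subcategory of $\N$, it induces an isomorphism on $\pi_!$. Thus $\loc(\varepsilon^{L}_{iX})$ is invertible at every level, hence invertible by Der2 (off the diagonal of $V$ every term is zero), so $\varepsilon_X$ is invertible. Having shown both $\eta$ and $\varepsilon$ are isomodifications, $\stab$ is an equivalence.

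I expect the main friction to be bookkeeping rather than a conceptual obstacle: correctly identifying the two constituent units and counits and their composites, reading off from \S5 that $L$ is fully faithful and that $(Lx)_i\cong\Sigma^{i}x$, and cleanly phrasing the ``$\pi_!$ sees only the tail'' step as a cofinality statement for the inclusion of a terminal subcategory of $\N$. The remaining ingredients — that the transition maps of $F$ are the comparison maps $\phi$ (Definition~\ref{defn:spectrum}, Equation~\ref{eq:locdefinition}), and that $(n,n)^\ast$ commutes with $\Omega$ and with $\pi_!$ on $\Sp\bS$ — are already in place from \S5--\S6.
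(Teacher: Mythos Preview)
Your proposal is correct and follows essentially the same approach as the paper: both decompose the unit and counit of $(\stab,(0,0)^\ast i)$ via the composite adjunction $(L,(0,0)^\ast)$ and $(\loc,i)$, use fully-faithfulness of $L$ and $i$ for the easy halves, and reduce the remaining halves to ``eventually isomorphic'' maps of $\N$-diagrams whose $\pi_!$ is invertible by cofinality of a tail of $\N$. The only cosmetic difference is that for the counit the paper replaces the target $F(iX)$ by the constant diagram $\pi^\ast X$ (legitimate since $X$ is an $\Omega$-spectrum) before taking $\pi_!$, whereas you keep $F(iX)$ and observe directly that the componentwise map $\Omega^{k}(\varepsilon^{L}_{iX})_{n+k}$ is an isomorphism for $k\geq\max(0,-n)$; these are the same argument.
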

\begin{proof}
First, a stable derivator is both pointed and regular, so this statement makes sense. Regularity follows by~\cite[Theorem~3.15(iii.b)]{Gro16b}. As we noted above, in a stable derivator, (homotopy) finite limits (such as pullbacks) commute with arbitrary colimits.

We would like to check that the unit and counit of $(\stab,(0,0)^\ast i)$ are isomodifications. To that end, let $\eta\colon\id_\bS\to (0,0)^\ast\stab$ be the unit and $\varepsilon\colon\stab(0,0)^\ast i\to \id_{\St\bS}$ the counit. Let $x\in \bS$, so that the unit evaluates to
\begin{equation*}
\eta_x\colon x\to (0,0)^\ast i\loc(Lx),
\end{equation*}
where $L\colon\bS\to\Sp\bS$ is the morphism taking $x$ to its connective suspension spectrum. This map is the composition of the units $\eta_1\colon \id_{\bS}\to (0,0)^\ast L$ and $\eta_2\colon \id_{\Sp\bS}\to i\loc$:
\begin{equation*}
\vcenter{\xymatrix@C=2em{
x\ar[r]^-{\eta_{1,x}}&(0,0)^\ast Lx\ar[rr]^-{(0,0)^\ast\eta_{2,Lx}}&&(0,0)^\ast i\loc(Lx).
}}
\end{equation*}

The first unit $\eta_1$ is an isomodification because $L$ is fully faithful. Recall that $\eta_2$ is a composition of an isomodification with a modification $\pi_!\pi^\ast\Rightarrow~\pi_! F$, so let us examine first the map $\pi^\ast Lx\to F(Lx)$. We know that $F(Lx)$ has the form
\begin{equation*}
\vcenter{\xymatrix{
Lx\ar[r]&\Omega\sigma^\ast Lx\ar[r]&\Omega^2(\sigma^\ast)^2Lx\ar[r]&\cdots
}}
\end{equation*}

Now, rather than compute $(0,0)^\ast \pi_!\pi^\ast Lx\to(0,0)^\ast \pi_! F(Lx)$ (where we suppress the inclusion $i$), we can check the map $(0,0)^\ast \pi^\ast Lx\to~ (0,0)^\ast F(Lx)$ and take $\pi_!$ of this map. Because $Lx$ is a suspension spectrum, $(0,0)^\ast(\sigma^\ast)^nLx=(n,n)^\ast Lx\cong\Sigma^nx$. Therefore
\begin{equation*}
(0,0)^\ast F(Lx)=(x\to \Omega\Sigma x\to \Omega^2\Sigma^2 x\to\cdots)\in\bS^\N
\end{equation*}
and each map is an isomorphism because $\bS$ is stable. Hence the map in question
\begin{equation*}
\vcenter{\xymatrix{
x\ar[r]^-\cong&\Omega\Sigma x\ar[r]^-\cong&\Omega^2\Sigma^2 x\ar[r]^-\cong&\cdots\\
x\ar[r]_-=\ar[u]^-=&x\ar[r]_-=\ar[u]&x\ar[r]_-=\ar[u]&\cdots
}}
\end{equation*}
is an isomorphism at every $n\in\N$, so that $\pi_!$ of this map is still an isomorphism. Therefore $(0,0)^\ast\eta_{2,Lx}$ is an isomorphism and thus $\eta_x$ is as well.

To examine the counit, let $Y\in\St\bS$. Then the counit is
\begin{equation*}
\varepsilon_Y\colon\loc (L(0,0)^\ast iY)\to Y.
\end{equation*}
For simplicity, write $Y_0=(0,0)^\ast iY$ as usual. The counit is the composition of\linebreak $\varepsilon_1\colon L(0,0)^\ast\to\id_{\Sp\bS}$ and $\varepsilon_2\colon \loc i\to \id_{\St\bS}$:
\begin{equation*}
\vcenter{\xymatrix@C=2em{
\loc (LY_0)\ar[rr]^-{\loc\varepsilon_{1,iY}}&&\loc iY\ar[r]^-{\varepsilon_{2,Y}}&Y
}}
\end{equation*}
The second counit $\varepsilon_2$ is an isomodification, so we need only to understand the first counit. In particular, we need to show that it is an isomorphism under $\loc$.

$LY_0$ is the connective suspension spectrum on $Y_0$, which is isomorphic to the truncation of the original $Y$ at the 0th level. The map $LY_0\to Y$ encodes the (adjoints of the) canonical comparison isomorphisms $\Sigma^n Y_0\to Y_n$ for $n\geq 0$ and is the zero map otherwise. We now need to compute $\loc$ applied to this map, which is $\pi_!$ applied to $F(LY_0)\to\pi^\ast Y$ followed by the canonical isomorphism $\pi_!\pi^\ast Y\to Y$.

We now examine the coherent diagram $F(LY_0)$. The spectrum $\Omega\sigma^\ast LY_0$ is given by
\begin{equation*}
(i,i)^\ast\Omega\sigma^\ast LY_0\cong\Omega \Sigma^{i+1}Y_0\cong\Sigma^{i}Y_0
\end{equation*}
for any $i\geq -1$ and zero elsewhere. In general, $\Omega^n(\sigma^\ast)^nLY_0$ is isomorphic to the truncation of the original spectrum $Y$ at the $-n$th level. We can compute the first few entries of the underlying diagram of $F(LY_0)\in\bS(V\times\N)$, letting $\rho\colon \id_{\bS}\to\Omega\Sigma$ be the unit of the $(\Sigma,\Omega)$ equivalence on $\bS$.
\begin{equation}\label{dia:stabonstable}
\vcenter{\xymatrix@R=0.5em@C=3em{
i&\vdots &\vdots &\vdots &\\
-2&0\ar[r] &0\ar[r] &\Omega^2 Y_0\ar[r] &\cdots\\
-1&0\ar[r] &\Omega Y_0\ar[r]^-{\Omega\rho_{Y_0}} &\Omega^2\Sigma Y_0 \ar[r]&\cdots\\
0&Y_0\ar[r]^-{\rho_{Y_0}} &\Omega\Sigma Y_0\ar[r]^-{\Omega\rho_{\Sigma Y_0}} &\Omega^2\Sigma^2 Y_0\ar[r] &\cdots\\
1&\Sigma Y_0\ar[r]^-{\rho_{\Sigma Y_0}}  &\Omega\Sigma^2 Y_0 \ar[r]^-{\Omega\rho_{\Sigma^2Y_0}} & \Omega^2\Sigma^3 Y_0\ar[r] &\cdots\\
&\vdots&\vdots&\vdots&
 }}
\end{equation}
The vertical chains represent $\Omega^n(\sigma^\ast)^n LY_0$ (from left to right), where we write only the $(i,i)$ entries and mark the value of $i$ on the lefthand side. The colimit $\pi_! F(LY_0)$ is computed using the horizontal morphisms, which we see are eventually isomorphisms for any $i\in\Z$.

We will now prove that $\pi_! F(LY_0)\to \pi_!\pi^\ast Y$ is an isomorphism by showing that it is a pointwise isomorphism. We may do this by computing $(i,i)^\ast$ applied to the above map $F(LY_0)\to \pi^\ast Y$ and then applying $\pi_!$, as $\pi_!$ commutes with $(i,i)^\ast$ up to canonical isomorphism. For $i\geq 0$, this map is
\begin{equation*}
\vcenter{\xymatrix@C=2em{
\Sigma^iY_0\ar[r]^-{\cong} &\Omega\Sigma^{i+1} Y_0\ar[r]^-{\cong} &\Omega^2\Sigma^{i+2} Y_0\ar[r]^-\cong &\cdots\\
Y_i\ar[r]^-=\ar@{<-}[u]^-\cong&Y_i\ar[r]^-=\ar@{<-}[u]&Y_i\ar[r]^-=\ar@{<-}[u]&\cdots
}}
\end{equation*}
which we see consists solely of isomorphisms, so $\pi_!$ of this map is an isomorphism. For $i<0$, we have a leading trail of zeroes:
\begin{equation*}
\vcenter{\xymatrix@C=2em{
0\ar[r]&\cdots\ar[r]&0\ar[r]&\Omega^{-i}Y_0\ar[r]^-{\cong} &\Omega^{-i}\Sigma Y_0\ar[r]^-{\cong} &\Omega^{-i}\Sigma^2 Y_0\ar[r]^-\cong &\cdots\\
Y_i\ar[r]^-=\ar@{<-}[u]&\cdots\ar[r]&Y_i\ar[r]^-=\ar@{<-}[u]&Y_i\ar[r]^-=\ar@{<-}[u]^-\cong&Y_i\ar[r]^-=\ar@{<-}[u]&Y_i\ar[r]^-=\ar@{<-}[u]&\cdots
}}
\end{equation*}
These maps are not all isomorphisms, but they are eventually isomorphisms. Therefore $\pi_!$ applied to this map is an isomorphism as well, which shows that $\loc(\varepsilon_{1,Y})$ is an isomorphism by Der2.

Having shown that both the unit and the counit of the adjunction are isomodifications, this proves the equivalence.
\end{proof}

We would now like to prove a universal property of stabilization of the following form: let $\bS$ be a stable derivator, and let $\D$ be a regular pointed derivator. Then we have an equivalence of categories
\begin{equation*}
\stab^\ast\colon\Hom_\bullet(\St\D,\bS)\to\Hom_\bullet(\D,\bS)
\end{equation*}
given by precomposition with $\stab\colon\D\to\St\D$, where $\Hom_\bullet\subset\Hom$ is the full subcategory on a certain class of morphisms. It would be surprising if $\Hom_\bullet$ were everything, as different morphisms $\St\D\to\bS$ which do not involve the stability of the domain and the codomain should not necessarily compose with $\stab$ to different morphisms $\D\to\St\D\to\bS$. We take as inspiration~\cite[Theorem~8.1]{Hel97} that $\bullet=\;!$, the subcategory of cocontinuous morphisms.

We deviate from Heller's proof, however, for two reasons. First, Heller's proof relies on $\D$ being a strong derivator. If $\D$ is strong, $\St\D$ is a strong, stable derivator so $\St\D(e)$ admits a canonical triangulated structure; see \cite[\S3]{Mal07} or \cite[\S4.2]{Gro13}. A study of `stable equivalences' and `stably trivial objects' gives him a kind of Verdier quotient from $\Sp\D\to\St\D$. But our derivators are not assumed strong, and stability alone is not sufficient to guarantee a triangulation.

The second and more important reason is that Heller's proof lacks key details. In particular, consider \cite[Diagram~9.3]{Hel97}, an infinite diagram which he constructs incoherently. Despite following his reference to \cite[III~\S3]{Hel88}, Heller does not prove that a diagram of shape $U$ (in his terminology) will lift coherently even if we assume that $\dia_{[n]}$ is full and essentially surjective for any $n\in\N$. Moreover, modern derivator literature has produced no (infinite) lifting results of the sort Heller would require, despite his hope that strong derivators have $\dia_K$ full and essentially surjective `for a much larger class of categories' $K$ than free ones \cite[III~\S3]{Hel88}. Because we have not assumed our derivators strong, we would not have access to such results in any case.

\begin{defn}
Let $\Phi\colon \D\to \E$ be a pointed morphism of regular pointed derivators. Then define $\St \Phi\colon \St\D\to\St\E$ by the composition
\begin{equation*}
\xymatrix{
\St\D\,\ar[r]^{i_\D}&\Sp\D\ar[r]^-{\Phi^V}&\Sp\E\ar[r]^-{\loc_\E}&\St\E.
}
\end{equation*}
We require that $\Phi$ be pointed in order that $\Phi^V\colon\Sp\D\to\E^V$ have its image in $\Sp\E$.
\end{defn}

\begin{lemma}\label{lemma:stableequivalence}
Let $\Phi\colon \D\to\E$ be a cocontinuous morphism of regular pointed derivators. Then the following square commutes up to invertible modification:
\begin{equation*}
\xymatrix{
\Sp\D\ar[d]_-{\loc_\D}\ar[r]^{\Phi^V}&\Sp\E\ar[d]^{\loc_\E}
\ar@{}[dl]|\swtrans \ar@{}[dl]<0.4ex>_\cong\\
\St\D\ar[r]_-{\St \Phi}&\St\E
}
\end{equation*}
\end{lemma}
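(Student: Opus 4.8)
The plan is to write down the invertible modification by hand and then check its invertibility levelwise. Since $\St\Phi=\loc_\E\Phi^V i_\D$ by definition, the composite $\St\Phi\circ\loc_\D$ equals $\loc_\E\Phi^V\circ(i_\D\loc_\D)$, where $i_\D\loc_\D\colon\Sp\D\to\Sp\D$ is the localization endomorphism of Lemma~\ref{lemma:locisaloc} with unit $\eta\colon\id_{\Sp\D}\to i_\D\loc_\D$. The candidate $2$-cell filling the square is therefore $\loc_\E\Phi^V\eta\colon\loc_\E\Phi^V\Rightarrow\St\Phi\loc_\D$, and it remains to show this modification is invertible, \ie that $\loc_{\E,K}(\Phi^V_K(\eta_X))$ is an isomorphism in $\St\E(K)$ for every $K$ and every $X\in\Sp\D(K)$. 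Because $\loc_\E$, $\Phi^V$ and $\eta$ all commute with the pullbacks $k^\ast$, applying $k^\ast$ and invoking Der2 for $\St\E$ reduces us to the case $K=e$.

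First I would unwind both sides using the explicit telescope formula. By the construction $\loc_\D=\pi_!F$ in \S6, the object $i_\D\loc_\D X$ is the sequential homotopy colimit of $X\to\Omega\sigma^\ast X\to\Omega^2(\sigma^\ast)^2X\to\cdots$ and $\eta_X$ is the canonical map from the $n=0$ term (Diagram~\ref{dia:coherentunit}). Both $\Phi^V$ (cocontinuous by hypothesis) and $\loc_\E$ (cocontinuous, being a left adjoint morphism by Theorem~\ref{thm:stdlocalisation}) preserve sequential colimits, and $\Phi^V$ also commutes with $\sigma^\ast$; hence $\loc_\E\Phi^V(\eta_X)$ is identified with the canonical map from the $n=0$ term into $\operatorname{hocolim}_n\loc_\E\Phi^V\bigl(\Omega^n(\sigma^\ast)^nX\bigr)$. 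So it suffices to prove that each transition map of this $\N$-sequence becomes an isomorphism after applying $\loc_\E$; and since those transition maps are iterated-loop, $\sigma^\ast$-twisted copies of the comparison morphism $\phi^\D_{(-)}\colon(-)\to\Omega\sigma^\ast(-)$ associated to the square~\eqref{dia:constructionofphi}, it suffices to prove that $\loc_\E$ inverts $\Phi^V(\phi^\D_Y)$ for every $Y\in\Sp\D$.

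The hard part is exactly this last statement, and it is where cocontinuity of $\Phi$ and regularity of $\D$ are both needed. The strategy I would follow: using the calculus of mates and the fact that $\phi^\D$ is assembled from the pullback $w^\ast$ of Construction~\ref{cons:w} and the right Kan extension $s_{\square,\ast}$ of~\eqref{dia:comparisonmorphism} (hence ultimately from $i_{\ur,\ast}$ and $(1,1)_!$), together with the compatibility of the structural comparison isomorphisms of $\Phi$ with pasting, one obtains a commuting triangle identifying $\phi^\E_{\Phi^V Y}$ with $c_Y\circ\Phi^V(\phi^\D_Y)$, where $c_Y\colon\Phi^V\Omega\sigma^\ast Y\to\Omega\sigma^\ast\Phi^V Y$ is the canonical comparison built from the right Kan extension comparison $\kappa\colon\Phi^\square i_{\ur,\ast}\Rightarrow i_{\ur,\ast}\Phi^\ur$ (which exists for any morphism of derivators). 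Since $\loc_\E(\phi^\E_W)$ is an isomorphism for all $W$ — this is proved inside the proof of Lemma~\ref{lemma:regular} — two-out-of-three reduces us to showing that $\loc_\E(c_Y)$, equivalently $\loc_\E$ of the appropriately whiskered $\kappa$, is an isomorphism. This is the crux: although $\Phi^V$ does \emph{not} commute with the loop functor on $\Sp\E$, it does commute with all the left Kan extensions and pullbacks appearing in $\Sigma$ and with extension by zero along (co)sieves, so that $\Phi^V\Sigma\cong\Sigma\Phi^V$; and $\loc_\E$ commutes with $\Omega$ and with $\sigma^\ast$ by the argument of Lemma~\ref{lemma:regular} (here regularity of $\Sp\E$, Proposition~\ref{prop:localizeregular}, is precisely what lets the sequential colimit $\pi_!$ pass through the finite limit $i_{\ur,\ast}$ hidden in $\Omega$). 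Combining these with Corollary~\ref{cor:shiftsuspension}, which trivializes $\Omega$ on $\St\E$ against the invertible shift $\sigma^\ast$, one checks that $\loc_\E\Phi^V\Omega\sigma^\ast(-)$ and $\loc_\E\Phi^V(-)$ are carried into one another by $\loc_\E(c_Y)$, so it is invertible. I expect this final compatibility — chasing $\kappa$ through $\loc_\E$ and matching it against the trivialization of $\Omega$ on $\St\E$ — to be the single genuinely delicate computation, and the place where dropping strongness forces a purely derivator-theoretic argument rather than the triangulated Verdier-quotient reasoning available to Heller.
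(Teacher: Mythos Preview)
Your modification $\loc_\E\Phi^V\eta_\D$ is exactly the one the paper uses, and your reductions down to the claim ``$\loc_\E$ inverts $\Phi^V(\phi^\D_Y)$ for every $Y$'' (equivalently, via the factorisation $\phi^\E_{\Phi^V Y}=c_Y\circ\Phi^V(\phi^\D_Y)$ and two-out-of-three, ``$\loc_\E(c_Y)$ is an isomorphism'') are sound. The gap is in that last claim. The ingredients you list --- $\Phi^V\Sigma\cong\Sigma\Phi^V$, $\loc_\E\Omega\cong\Omega\loc_\E$, and $\Omega\sigma^\ast\cong\id$ on $\St\E$ --- only let you simplify the \emph{target} of $\loc_\E(c_Y)$ to $\loc_\E\Phi^V Y$. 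They do nothing for the \emph{source} $\loc_\E\Phi^V\Omega\sigma^\ast Y$: that inner $\Omega$ lives in $\Sp\D$, sits behind $\Phi^V$, and cannot be moved outward or cancelled using cocontinuity or stability of $\St\E$. Postcomposing with $\Sigma_{\St\E}$ and using $\Sigma\Phi^V\cong\Phi^V\Sigma$ only trades the problem for showing that $\loc_\E\Phi^V$ inverts the $\Sp\D$-counit $\Sigma\Omega\to\id$, which is no easier. In effect the assertion ``$\loc_\E(c_Y)$ is invertible'' is equivalent to the lemma, so the sketch is circular.

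The paper proceeds along a different axis. Rather than try to invert any comparison map, it builds a single \emph{coherent} triangular diagram $\mathbf X\in\Sp\E^U$ whose entry at $(k,l)$ is $\Omega^l\Phi\Omega^{k-l}(\sigma^\ast)^kX$ (this is an extra-dimensional version of Construction~\ref{cons:coherentdia}, carried out at the level of $V^{\geq0}\times\N_1\times\N_2$ with a further right Kan extension). The diagonal $d\colon\N\to U$ restricts to $F_\E\Phi^V X$ and the non-full inclusion $i_2\colon\N^2\to U$ restricts to $F_\E\Phi^V F_\D(X)$; both are shown to be homotopy final by explicit initial-object arguments in the slice categories. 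Hence $\pi_{U,!}\mathbf X$ computes both $\loc_\E\Phi^V X$ and $\St\Phi\loc_\D X$, and the unit map is identified with the composite of the two cofinality isomorphisms. The work is entirely in producing $\mathbf X$ coherently; no statement of the form ``$\loc_\E$ inverts such-and-such comparison'' is ever needed.
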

\begin{proof}
This modification is given by the unit of the localization $\eta_\D\colon \id_{\Sp\D}\to i_\D \loc_\D$. This gives us a map
\begin{equation}\label{eqn:hardmodification}
\xymatrix@C=4em{
\loc_\E \Phi^V\ar[r]^-{\loc_\E \Phi^V\eta_\D}&\loc_\E \Phi^V i_\D\loc_\D=\St \Phi\loc_\D\!.
}
\end{equation}
We just need to prove that this is an isomodification. We will prove that, for any $X\in\Sp\D$, $\loc_\E\Phi^V\eta_{\D,X}$ is an isomorphism explicitly, though this will involve changing its domain and codomain up to isomorphism.

As we will be working with many different diagram categories shortly, we will also write $\Phi$ for the appropriate $\Phi^K$. Remembering $\loc=\pi_! F$, we know that $\Phi\pi_!\cong\pi_!\Phi$ because $\Phi$ is cocontinuous, so our codomain is isomorphic to $\pi_! F \pi_! \Phi F(X)$. We claim that we may commute the middle $F$ and $\pi_!$ by regularity.

Let $Y\in\Sp\E^\N$ and examine the comparison map $\pi_! F(Y)\to F\pi_!(Y)$. The underlying diagram is
\begin{equation*}
\vcenter{\xymatrix{
\pi_!Y\ar[r]\ar[d]& \pi_!\Omega\sigma^\ast Y\ar[r]\ar[d]&\pi_!\Omega^2(\sigma^\ast)^2 Y\ar[r]\ar[d]&\cdots\\
\pi_!Y\ar[r]&\Omega\sigma^\ast \pi_!Y\ar[r]&\Omega^2(\sigma^\ast)^2 \pi_!Y\ar[r]&\cdots
}}
\end{equation*}
Hence $\pi_!$ commutes with $F$ so long as $\Omega$ (and $\sigma^\ast$) commutes with $\pi_!$. Since we have assumed $\E$ is regular, this is so.

Therefore we may compute $\loc_\E\Phi^V i_\D \loc_\D(X)$ as $\pi_!\pi_! F\Phi F(X)$, that is, the colimit of an $\N^2$-shaped diagram. Let us adopt the shorthand $X^i=\Omega^i(\sigma^\ast)^i X$. Then the underlying diagram of $F\Phi F(X)$ is
\begin{equation}\label{dia:waffle}
\vcenter{\xymatrix@C=1em@R=1em{
\Phi X^0\ar[r]\ar[d]& \Phi X^1\ar[d]\ar[r]&\Phi X^2\ar[r]\ar[d]&\cdots\\
\Omega\sigma^\ast\Phi X^0\ar[r]\ar[d]& \Omega\sigma^\ast\Phi X^1\ar[d]\ar[r]&\Omega\sigma^\ast\Phi X^2\ar[r]\ar[d]&\cdots\\
\Omega^2(\sigma^\ast)^2\Phi X^0\ar[r]\ar[d]& \Omega^2(\sigma^\ast)^2\Phi X^1\ar[d]\ar[r]&\Omega^2(\sigma^\ast)^2\Phi X^2\ar[r]\ar[d]&\cdots\\
\vdots&\vdots&\vdots
}}
\end{equation}
In short, $(p,q)^\ast F\Phi F(X)= \Omega^q(\sigma^\ast)^q\Phi(X^p)$. Note that the lefthand column ($p=0$) is $F\Phi^V (X)$, the domain of our missing map before applying $\pi_!$. Indeed, the unit $\loc_\E\Phi^V\eta_{\D,X}$ is the colimit over $\N^2$ of the map from $\pi^\ast$ applied to the first column to the diagram as a whole. For a restricted portion of $\N^2$, this map is
\begin{equation*}
\vcenter{\xymatrix@C=1em@R=1em{
\Phi X^0\ar[r]^-=\ar[d]& \Phi X^0\ar[d]\ar[r]^-=&\Phi X^0\ar[d]\\
\Omega\sigma^\ast\Phi X^0\ar[r]^-=& \Omega\sigma^\ast\Phi X^0\ar[r]^-=&\Omega\sigma^\ast\Phi X^0\\
}}
\longrightarrow
\vcenter{\xymatrix@C=1em@R=1em{
\Phi X^0\ar[r]\ar[d]& \Phi X^1\ar[d]\ar[r]&\Phi X^2\ar[d]\\
\Omega\sigma^\ast\Phi X^0\ar[r]& \Omega\sigma^\ast\Phi X^1\ar[r]&\Omega\sigma^\ast\Phi X^2\\
}}
\end{equation*}

We will show that $\loc_\E\Phi^V\eta_{\D,X}$ is an isomorphism directly by decomposing it into the composition of two canonical colimit comparison maps which we prove are isomorphisms, namely Equations~\ref{eq:map1}~and~\ref{eq:map2}.

In order to do this we show that Diagram~\ref{dia:waffle} is a final subdiagram of a diagram which is essentially \cite[Diagram~9.3]{Hel97}. In the theme of this paper, Heller's argument is upgradeable to a coherent version. The following argument is an extra-dimensional enhancement of Construction~\ref{cons:coherentdia}.

Recall from Equation~\ref{eq:locdefinition} that the functor $F\colon\Sp\D\to\Sp\D^\N$ had as its last steps a right Kan extension and a restriction to $\{(0,0)\}\times \N\subset V^{\geq 0}\times \N$. Let us denote by $G=f_\ast\zeta^\ast\omega^\ast\colon\Sp\D\to\Sp\D^{V^{\geq 0}\times \N}$ all but the ultimate step of that construction. For an object $X\in\Sp\D$, $G(X)$ has the following form:
\begin{equation*}
\vcenter{\xymatrix@R=1ex@C=1ex{
& & & & 0\ar[rr]\ar'[d][dd] & & (\sigma^\ast)^2 X\ar[dd]\\
& 0\ar[rr]\ar'[d][dd] & & \sigma^\ast X\ar[dd]\ar[rr]\ar[ur] & & 0\ar[ur]\ar[dd]\\
X\ar[dd]\ar[ur]\ar[rr] & & 0\ar[ur]\ar[dd] & & 0\ar'[r][rr]\ar'[d][dd] & & (\sigma^\ast)^2 X\ar[dd]\\
& 0\ar'[r][rr]\ar'[d][dd] & & \sigma^\ast X\ar[dd]\ar[ur]\ar[rr] & & 0\ar[ur]\ar[dd]\\
\Omega\sigma^\ast\ar[dd]\ar[ur]\ar[rr] X& & 0\ar[ur]\ar[dd] & & 0\ar'[r][rr]\ar@.[d] & & (\sigma^\ast)^2 X\ar@.[d]\\
& 0\ar'[r][rr]\ar@.[d] & & \Omega(\sigma^\ast)^2 X\ar[rr]\ar[ur]\ar@.[d] & & 0\ar[ur]\ar@.[d]& \\
\Omega^2(\sigma^\ast)^2 X\ar[ur]\ar[rr]\ar@.[d]& & 0\ar[ru]\ar@.[d] & & & &\\
& & & & & & &
}}
\end{equation*}

We now apply the morphism $\Phi$ to the entire diagram. The $(0,0)$ column is now of the form $\Phi (X^i)$, so it is the first row of Diagram~\ref{dia:waffle}. We now need to construct coherently the rest of Diagram~\ref{dia:waffle}, and to do so we will need to add yet another dimension to this diagram.

Consider the following subcategory of $V^{\geq 0}\times \N_1\times \N_2$, where we imagine $\Phi G(X)$ embedding into the first two coordinates and aiming for a constant diagram in the $\N_2$-direction. At each $k\in \N_1$, we restrict to $\{0,\ldots, i\}\subset \N_2$ for $(i,i)\in V^{\geq 0}$ with $i<k$ and $\{0,\ldots,k\}\subset \N_2$ otherwise. Call this shape $B_1$ and let $b_1\colon B_1\to V^{\geq 0}\times \N$ be the projection forgetting the last coordinate. Then $b_1^\ast \Phi G(X)$ is equal to $\Phi G(X)$ when the last coordinate is zero, and constant of variable lengths in the $\N_2$-direction.

For example, at $k=2$ we have the subcategory of $V^{\geq 0}\times\{0,1,2\}$:
\begin{equation}\label{dia:extradimension}
\vcenter{\xymatrix@R=1ex@C=1ex{
& & & & & & & {} \\
& & & & 0\ar[rr]\ar'[d][dd] & & \Phi(\sigma^\ast)^2 X\ar[dd]\ar@.[ur]\\
& 0\ar[rr]\ar'[d][dd] & & \Phi\Omega(\sigma^\ast)^2 X\ar[dd]\ar[rr]\ar[ur] & & 0\ar[ur]\ar[dd]&&{}\\
\Phi\Omega^2(\sigma^\ast)^2X\ar[ur]\ar[rr] & & 0\ar[ur]\ar[dd] & & 0\ar'[r][rr]\ar'[d][dd] & & \Phi(\sigma^\ast)^2 X\ar[dd]\ar@.[ur]\\
& 0\ar'[r][rr]\ar[dd] & & \Phi\Omega(\sigma^\ast)^2 X\ar[ur]\ar[rr] & & 0\ar[ur]\ar[dd]&&{}\\
& & 0\ar[ur]\ar[dd] & & 0\ar'[r][rr] & & \Phi(\sigma^\ast)^2 X\ar@.[ur]\\
& 0\ar[rrru]|!{[ur];[dr]}\hole & & & & 0\ar[ur]\\
& & 0\ar[rrru] & & & &
}}
\end{equation}
continuing in a constant vertical diagram of shape $[2]$ in the $V^{\geq 0}$ dimension but not downward in the $\N_2$ dimension. This is an analogous construction to Diagram~\ref{dia:Z}, but depending on both $(i,j)\in V^{\geq 0}$ and $k\in\N_1$.

Let $B_2$ be the subdiagram $V^{\geq 0}\times \N_1\times \N_2$ which has at $k\in \N_1$ the entire $\{0,\ldots ,k\}~\subset \N_2$ regardless of $(i,j)\in V^{\geq 0}$, and let $b_2\colon B_1\to B_2$ be the inclusion. Then we would like to compute $b_{2,\ast}b_1^\ast \Phi G(X)$. Specifically, we would like to investigate the squares
\begin{equation*}
\xymatrix{
(i,i+1,k,l)\ar[r]&(i+1,i+1,k,l)\\
(i,i,k,l)\ar[u]\ar[r]&(i+1,i,k,l)\ar[u]
}
\end{equation*}
for $i< l\leq k$. The lower left corner does not lie in the image of $b_2$, so we will again use Proposition~\ref{prop:cartdetectlemma} to determine that these squares are cartesian. Though we have another dimension, we are still working in a poset, so the slice categories we construct will be subcategories of $B_2$.

Rather than attempt to describe $(B_2\setminus(i,i,k,l))_{(i,i,k,l)/}$, let us begin immediately to construct the right adjoint $r$ to $\ur\to (B_2\setminus(i,i,k,l))_{(i,i,k,l)/}$. The adjoint is nearly the same as the one we constructed in our first application of Proposition~\ref{prop:cartdetectlemma} in the Construction~\ref{cons:coherentdia}. We use the same notation for $\ur$ as we did there, and we temporarily abandon entirely our conventions, having run out of plausible variable names:
\begin{equation*}
r(w,x,y,z)=\begin{cases}
(1,1)&w,x\geq i+1\\ 
(1,0)&(w,x)=(i+1,i)\\
(0,1)&(w,x)=(i,i+1)
\end{cases}
\end{equation*}
As in the earlier argument, the $\N$-dimensions do not matter, because any \linebreak$(w,x,y,z)\in B_2$ which is in the subcategory $(B_2\setminus(i,i,k,l))_{(i,i,k,l)/}$ will always satisfy $(y,z)\geq (k,l)$. That this is a right adjoint can be verified identically to above.

This proves that all such squares are cartesian. Aiming again to calculate the values of our diagram at $(0,0,k,l)$, we can first look at the square with lower left corner $(l-1,l-1,k,l)$. The rest of this square is entirely in $B_1$, with underlying diagram
\begin{equation*}
\xymatrix@C=1em@R=1em{
0\ar[r]&\Phi\Omega^{k-l}(\sigma^\ast)^k X\\
&0\ar[u]
}
\end{equation*}
This makes sense because we necessarily have $l\leq k$. After applying $b_{2,\ast}$ we obtain a cartesian square, hence
\begin{equation*}
(l-1,l-1,k,l)^\ast b_{2,\ast}b_1^\ast \Phi G(X)\cong \Omega\Phi\Omega^{k-l}(\sigma^\ast)^k X
\end{equation*}
and so by induction
\begin{equation*}
(0,0,k,l)^\ast b_{2,\ast}b_1^\ast \Phi G(X)\cong \Omega^l\Phi\Omega^{k-l}(\sigma^\ast)^k X.
\end{equation*}
To give the result of Diagram~\ref{dia:extradimension} after $b_{2,\ast}$:
\begin{equation}\label{dia:extradimensionafter}
\vcenter{\xymatrix@R=1ex@C=1ex{
& & & & & & & {} \\
& & & & 0\ar[rr]\ar'[d][dd] & & \Phi(\sigma^\ast)^2 X\ar[dd]\ar@.[ur]\\
& 0\ar[rr]\ar'[d][dd] & & \Phi\Omega(\sigma^\ast)^2 X\ar[dd]\ar[rr]\ar[ur] & & 0\ar[ur]\ar[dd]&&{}\\
\Phi\Omega^2(\sigma^\ast)^2X\ar[dd]\ar[ur]\ar[rr] & & 0\ar[ur]\ar[dd] & & 0\ar'[r][rr]\ar'[d][dd] & & \Phi(\sigma^\ast)^2 X\ar[dd]\ar@.[ur]\\
& 0\ar'[r][rr]\ar'[d][dd] & & \Phi\Omega(\sigma^\ast)^2 X\ar[dd]\ar[ur]\ar[rr] & & 0\ar[ur]\ar[dd]&&{}\\
\Omega\Phi\Omega(\sigma^\ast)^2X\ar[dd]\ar[ur]\ar[rr]& & 0\ar[ur]\ar[dd] & & 0\ar'[r][rr] & & \Phi(\sigma^\ast)^2 X\ar@.[ur]\\
& 0\ar'[r][rr] & & \Omega\Phi(\sigma^\ast)^2 X\ar[ur]\ar[rr] & & 0\ar[ur]\\
\Omega^2\Phi(\sigma^\ast)^2X\ar[rr]\ar[ur]& & 0\ar[ur] & & & &
}}
\end{equation}

Up to the precise location of $\sigma^\ast$, we obtain all elements that appear in Diagram~\ref{dia:waffle}. Our final step of this construction is to restrict to all elements with $(0,0)$ in the $V^{\geq 0}$ coordinate. This has underlying diagram
\begin{equation}\label{dia:wafflecone}
\vcenter{\xymatrix@R=1em@C=1em{
\Phi X\ar[r]\ar[dr]&\Phi\Omega\sigma^\ast X\ar[r]\ar[d]&\Phi\Omega^2(\sigma^\ast)^2 X\ar@.[r]\ar[d]&\\
&\Omega\Phi\sigma^\ast X\ar[r]\ar[dr]&\Omega\Phi\Omega(\sigma^\ast)^2 X\ar@.[r]\ar[d]&\\
&&\Omega^2\Phi(\sigma^\ast)^2 X\ar@.[dr]\ar@.[r]&\\
&&&&
}}
\end{equation}
where now the horizontal direction is $\N_1$ and the vertical direction $\N_2$. For example, the three objects in Diagram~\ref{dia:extradimensionafter} in the $(0,0)$ column form the third column of the above diagram. Call this triangular diagram shape $U$, as Heller does, and let $\mathbf X=(0,0)^\ast b_{2,\ast}b_1^\ast\Phi G(X)\in\Sp\D^U$.

The diagonal in Diagram~\ref{dia:wafflecone} is homotopy final. That is, let $d\colon\N\to U$ be the inclusion of the diagonal. Then for any $\mathbf{Y}\in \Sp\D^U$, \eg the diagram $\mathbf{X}$ we have constructed above, the canonical map
\begin{equation}\label{eq:map1}
\pi_{\N,!} d^\ast \mathbf{Y}\overset{\cong}\longrightarrow \pi_{U,!} \mathbf{Y}
\end{equation}
is an isomorphism. We use \cite[Corollary~3.13]{GroPonShu14} to prove this. Let us label the objects of $U$ as indicated by Diagram~\ref{dia:augmentedwaffle} by $(p,q)$. Then we need to show that the slice category $d_{(p,q)/}$ is homotopy contractible for any $(p,q)\in U$. Avoiding the technical definition, it suffices to show that these categories admit an initial object. But this is obvious: there is a unique point on the diagonal closest to any $(p,q)\in U$, namely $(\max\{p,q\},\max\{p,q\})$; this object with the unique map from $(p,q)$ is the initial object of this category slice category, so $d$ is homotopy final.

In our case, $d^\ast\mathbf{X}\cong F\Phi^V(X)$, so $\pi_{\N,!}d^\ast\mathbf{X}\cong\loc_\E\Phi^V(X)$, precisely the domain of the map we are constructing, Equation~\ref{eqn:hardmodification}.

We now claim that the original Diagram~\ref{dia:waffle} can be found in $U$ as well. To see this, we stretch the shape $U$:
\begin{equation}\label{dia:augmentedwaffle}
\vcenter{\xymatrix@R=1em{
\Phi X^0\ar[r]\ar[d]& \Phi X^1\ar@{-->}[d]\ar[r]\ar[dl]&\Phi X^2\ar[r]\ar@{-->}[d]\ar[dl]&\cdots\\
\Omega\sigma^\ast\Phi X^0\ar[r]\ar[d]& \Omega\sigma^\ast\Phi X^1\ar@{-->}[d]\ar[r]\ar[dl]&\Omega\sigma^\ast\Phi X^2\ar[r]\ar@{-->}[d]\ar[dl]&\cdots\\
\Omega^2(\sigma^\ast)^2\Phi X^0\ar[r]\ar[d]& \Omega^2(\sigma^\ast)^2\Phi X^1\ar@{-->}[d]\ar[r]&\Omega^2(\sigma^\ast)^2\Phi X^2\ar[r]\ar@{-->}[d]&\cdots\\
\vdots&\vdots&\vdots
}}
\end{equation}
The first row of Diagram~\ref{dia:augmentedwaffle} corresponds to the first row of Diagram~\ref{dia:wafflecone} and the first column corresponds to the diagonal. The vertical arrows in Diagram~\ref{dia:wafflecone} thus have stretched to the diagonal arrows in Diagram~\ref{dia:augmentedwaffle}, and the dashed vertical arrows are just compositions. In this way, we can see $\N^2$ is a non-full subcategory of $U$, and we let $i_2\colon \N^2\to U$ be the inclusion. This gives in total
\begin{equation*}
\xymatrix@C=1em{
\N\ar@/^1pc/[rr]^-{d}\ar[r]_{i_1}&\N^2\ar[r]_{i_2}&U,
}
\end{equation*}
where $\N\to\N^2$ is the map into the first column and $\N^2\to U$ is the map indicated by the above diagram. The composition is the diagonal map $d\colon\N\to U$.

We now claim that $i_2$ is also a homotopy final functor. It suffices to prove that each of the categories ${i_2}_{(p,q)/}$ admits an initial object for any $(p,q)\in U$. Because $\N^2\to U$ is surjective (though not full), we have that $(p,q)=i_2(p,q)$ is an element of each ${i_2}_{(p,q)/}$, and this object is initial. Thus we have an isomorphism
\begin{equation}\label{eq:map2}
\pi_{\N^2,!}i_2^\ast\mathbf{Y}\overset{\cong}\longrightarrow\pi_{U,!}\mathbf{Y}
\end{equation}
for any $\mathbf{Y}\in\Sp\D^U$. Using our constructed $\mathbf{X}$, we have $\pi_{\N^2,!}i_2^\ast\mathbf{X}\cong\pi_!\pi_! F\Phi F(X)$, which is the codomain of Equation~\ref{eqn:hardmodification}. Putting these two isomorphisms together, we have the required isomorphism
\begin{equation*}
\vcenter{\xymatrix@C=1em{
\loc_\E\Phi^V(X)\cong\pi_{\N,!}d^\ast \mathbf{X}\ar@(d,d)[rr]_{\loc\Phi^V\eta_{\D,X}}\ar[r]^-\cong&\pi_{!,U}\mathbf{X}&\ar[l]_-\cong\pi_{\N^2,!}i_2^\ast\mathbf{X}\cong\loc_\E\Phi^Vi_\D\loc_\D(X).
}}
\end{equation*}
This completes the proof.
\end{proof}

\begin{notn}
Let $\textbf{Der}_!$ be the 2-category with objects regular pointed derivators, cocontinuous morphisms, and all modifications. Let $\textbf{StDer}_!\subset\textbf{Der}_!$ be the full 2-subcategory of stable derivators.
\end{notn}

\begin{prop}
The assignment $\D\mapsto\St\D$ and $\Phi\mapsto \St\Phi$ gives a pseudofunctor
\begin{equation*}
\St\colon\textbf{Der}_!\longrightarrow\textbf{StDer}_!.
\end{equation*}
\end{prop}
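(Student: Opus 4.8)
The plan is to exhibit $\St$ as the ``reflection'' of a strict $2$-endofunctor of $\textbf{Der}_!$ along a compatible family of reflective localizations, the compatibility being exactly Lemma~\ref{lemma:stableequivalence}. First I would record that $\Sp\colon\textbf{Der}_!\to\textbf{Der}_!$, $\D\mapsto\D^{V,\partial V}$ and $\Phi\mapsto\Phi^V$, is a \emph{strict} $2$-functor: the shift $\D\mapsto\D^V$ is strictly $2$-functorial, a cocontinuous $\Phi$ preserves the initial object and hence (in the pointed setting) the zero object, so $\Phi^V$ restricts to $\Sp\D\to\Sp\E$, and by Remark~\ref{rk:localizekanformula} the Kan extensions of a vanishing subderivator are the restrictions of the ambient ones, so $\Phi^V\colon\Sp\D\to\Sp\E$ is again cocontinuous and $\Sp\D$ is again regular pointed by Proposition~\ref{prop:localizeregular}. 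On objects $\St$ lands in $\textbf{StDer}_!$ because $\St\D$ is a stable (hence regular pointed) derivator, and on $1$-cells $\St\Phi=\loc_\E\Phi^V i_\D$. The first thing to verify is that $\St\Phi$ is cocontinuous: $\loc_\E$ is left adjoint to $i_\E$ and so is cocontinuous, whence $\loc_\E\Phi^V$ is cocontinuous; by Lemma~\ref{lemma:stableequivalence} there is an invertible modification $\loc_\E\Phi^V\cong\St\Phi\,\loc_\D$, and since $\loc_\D$ is cocontinuous and levelwise essentially surjective, whiskering on the right by $\loc_{\D,J}$ the canonical comparison $2$-cell $u'_!(\St\Phi)_J\Rightarrow(\St\Phi)_Ku'_!$ and using that $\loc_\D$ preserves left Kan extensions identifies it, through the mate calculus, with the invertible comparison $2$-cell witnessing cocontinuity of $\loc_\E\Phi^V$; essential surjectivity of $\loc_{\D,J}$ then forces the comparison $2$-cell for $\St\Phi$ to be invertible.

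On $2$-cells I would define $\St\rho:=\loc_\E\ast\rho^V\ast i_\D$ for a modification $\rho\colon\Phi\Rightarrow\Psi$, using the shifted modification $\rho^V$ (which restricts to $\Sp\D\to\Sp\E$). This is visibly strictly functorial in $\rho$, so $\St$ restricts to an ordinary functor $\Hom_!(\D,\E)\to\Hom_!(\St\D,\St\E)$ on each hom-category.

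Next I would produce the coherence data. For the unitor I take $\St(\id_\D)=\loc_\D i_\D\Rightarrow\id_{\St\D}$ to be the counit of the adjunction $\loc_\D\dashv i_\D$, an isomodification since $i_\D$ is fully faithful. For composable $\Phi\colon\D\to\E$ and $\Psi\colon\E\to\F$, using the strict identity $(\Psi\Phi)^V=\Psi^V\Phi^V$ I define the compositor by inserting the localization unit $\eta_\E\colon\id_{\Sp\E}\Rightarrow i_\E\loc_\E$:
\[
\mu_{\Psi,\Phi}\colon \St(\Psi\Phi)=\loc_\F\Psi^V\Phi^V i_\D \xrightarrow{\ \loc_\F\Psi^V\eta_\E\Phi^V i_\D\ } \loc_\F\Psi^V i_\E\loc_\E\Phi^V i_\D=\St\Psi\,\St\Phi.
\]
The displayed arrow is $\loc_\F\Psi^V\eta_\E$ whiskered on the right by $\Phi^V i_\D$, and $\loc_\F\Psi^V\eta_\E$ is invertible by Lemma~\ref{lemma:stableequivalence} applied to $\Psi$; hence $\mu_{\Psi,\Phi}$ is an isomodification.

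Finally I would verify the pseudofunctor axioms: naturality of $\mu$ in both variables, the associativity coherence relating $\mu_{\Xi\Psi,\Phi}$, $\mu_{\Xi,\Psi\Phi}$, $\mu_{\Xi,\Psi}$ and $\mu_{\Psi,\Phi}$, and the left and right unit triangles. Naturality is immediate because $\eta_\E$ is a modification and therefore commutes past the whiskered $2$-cells $\rho^V$ and $\tau^V$. For the remaining axioms, my plan is to whisker both sides of each equality on the right by $\loc_\D$: using the compatibility of Lemma~\ref{lemma:stableequivalence} to rewrite each occurrence of $\St(-)\loc$ as $\loc\,(-)^V$, the strict functoriality of $(-)^V$, and the modification axioms for $\eta_\E$ and $\eta_\F$, both sides collapse to the same canonically built $2$-cell; since $\loc_\D$ is levelwise essentially surjective, precomposition with $\loc_\D$ is faithful on modifications, so the original equalities follow. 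I expect this last step --- arranging the various whiskered copies of $\eta$ so that the pentagon closes --- to be the main obstacle, though it is entirely routine: conceptually it is the general fact that a strict $2$-functor descends to a pseudofunctor along a compatible family of reflective localizations, with Lemma~\ref{lemma:stableequivalence} furnishing precisely the compatibility that makes this work.
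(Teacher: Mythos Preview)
Your proposal is correct and follows essentially the same route as the paper: cocontinuity of $\St\Phi$ via Lemma~\ref{lemma:stableequivalence} and essential surjectivity of $\loc_\D$, the unitor as the counit of $(\loc_\D,i_\D)$, and the compositor as $\loc_\F\Psi^V\eta_\E$ whiskered with $\Phi^V i_\D$, invertible again by Lemma~\ref{lemma:stableequivalence}. You are in fact somewhat more explicit than the paper about the action on $2$-cells and the verification of the coherence axioms, which the paper dispatches in a sentence by observing that all the modifications involved are built from the unit and counit of the localization adjunction.
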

\begin{proof}
First, we need to show that for $\Phi\in\Hom_{!}(\D,\E)$, $\St\Phi$ is cocontinuous. To see this, let $u\colon J\to K$ be any functor in $\Dia$. Using the natural isomorphism of Lemma~\ref{lemma:stableequivalence}, we know that for any $X\in \Sp\D(J)$,
\begin{equation*}
\loc_\E(\Phi^V(u_!X))\cong\St\Phi(\loc_\D(u_!X)).
\end{equation*}
But $\loc_\E\Phi^V$ is a composition of cocontinuous morphisms, so
\begin{equation*}
\loc_\E(\Phi^V(u_!X))\cong u_!\loc_\E(\Phi^V(X)).
\end{equation*}
Using again the natural isomorphism,
\begin{equation*}
u_!\loc_\E(\Phi^V(X))\cong u_!\loc_\D(\St\Phi(X)).
\end{equation*}
Because $\loc_\D$ is cocontinuous, it also commutes with $u_!$. Putting this all together we conclude that
\begin{equation*}
\St\Phi(u_!\loc_\D(X))\cong u_!\St\Phi(\loc_\D(X)).
\end{equation*}
Therefore $\St\Phi$ preserves left Kan extensions along $u$ on the essential image of $\loc_\D$, which is all of $\St\D$.

To show that $\St$ is a pseudofunctor, we also need to give a natural isomorphism $\id_{\St\D}\to\St(\id_\D)$ for any $\D$. But by definition, $\St(\id_\D)=\loc_\D i_\D$, which is isomorphic to $\id_{\St\D}$ via the counit of the $(\loc_\D,i_\D)$ adjunction.

The last thing to show for pseudofunctoriality is the assignment on morphisms behaves well with respect to composition. Let $\Phi\colon\D\to\E$ and $\Psi\colon\E\to\F$. Then we would like to say that $\St\Psi\St\Phi\cong\St(\Psi\Phi)$ naturally in $\Phi$ and $\Psi$. Unwinding the definitions, we have the following situation:
\begin{equation*}
\xymatrix{
\St\D\ar[r]^{i_\D}&\Sp\D\ar[d]_{\Phi^V}\ar[rr]^{\Psi^V\Phi^V=\,(\Psi\Phi)^V}&&\Sp\F\ar[r]^{\loc_\F}\ar@{}[dll]|\swtrans&\St\F\\
&\Sp\E\ar[r]_{\loc_\E}&\St\E\ar[r]_{i_\E}&\Sp\E\ar[u]_{\Psi^V}
}
\end{equation*}
where the bottom composition is $\St\Psi\St\Phi$ and the top is $\St(\Psi\Phi)$. The natural transformation is induced by the unit $\eta_\E\colon\id_{\Sp\E}\to i_\E\loc_\E$. Specifically, it is $\loc_\F\Psi^V\eta_{\E,\Phi^V i_\D}$. Using essentially the same argument of the preceding lemma, this is an isomodification. This gives us the requisite invertible modification defining composition.

We should technically check associativity of composition of morphisms, but the above constructions show that the modifications involved in the associativity diagrams come from the unit and the counit of the adjunction. Therefore all the necessary associativity conditions hold because they do for compositions of modifications.
\end{proof}

\begin{theorem}\label{thm:main2}
The pseudofunctor $\St\colon\textbf{Der}_{!}\to\textbf{StDer}_!$ is left adjoint to the (fully faithful) inclusion $\textbf{StDer}_!\to\textbf{Der}_!$. That is, for any stable derivator $\bS$,
\begin{equation*}
\stab^\ast\colon \Hom_{!}(\St\D,\bS)\to\Hom_{!}(\D,\bS)
\end{equation*}
given by precomposition with the morphism $\stab\colon\D\to\St\D$ is an equivalence of categories.
\end{theorem}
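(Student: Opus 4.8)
The plan is to build an explicit quasi-inverse to $\stab^\ast$, identify both composites with identities, and thereby also exhibit the pseudoadjunction $\St\dashv\text{incl}$. The first ingredient is the \emph{pseudonaturality of $\stab$}: for every cocontinuous morphism $\Phi\colon\D\to\E$ of regular pointed derivators there is a canonical invertible modification $\St\Phi\circ\stab_\D\cong\stab_\E\circ\Phi$. To get this, note first that $\Phi$ is automatically pointed (it preserves the initial object, \ie the empty colimit, hence the zero object), and that $\Phi$ commutes with the morphism $L$: indeed $L$ is assembled in Equation~\ref{eq:Ldiagram} from the left Kan extensions $(0,0)_!$ and $\iota_!$, which $\Phi$ preserves since it is cocontinuous, and the extension-by-zero $\iota_{\leq 0,\ast}$ along a sieve, with which any pointed morphism commutes by a pointwise check via Der2 (both sides vanish off $V^{\leq 0}$, and on $V^{\leq 0}$ the comparison map is $\Phi$ of an isomorphism). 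Combining $\Phi^V L_\D\cong L_\E\Phi$ with Lemma~\ref{lemma:stableequivalence} (whose modification is $\loc_\E\Phi^V$ whiskered with the unit $\eta_\D\colon\id_{\Sp\D}\to i_\D\loc_\D$) yields $\St\Phi\circ\stab_\D=\loc_\E\Phi^Vi_\D\loc_\D L_\D\cong\loc_\E\Phi^V L_\D\cong\loc_\E L_\E\Phi=\stab_\E\circ\Phi$.

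For a stable derivator $\bS$ set $\mathfrak q_\bS:=(0,0)^\ast i_\bS\colon\St\bS\to\bS$; by Proposition~\ref{prop:stabonstable} this is an equivalence quasi-inverse to $\stab_\bS$, so it is cocontinuous and $\mathfrak q_\bS\circ\stab_\bS\cong\id_\bS$. Define $\Theta\colon\Hom_!(\D,\bS)\to\Hom_!(\St\D,\bS)$ by $\Theta(\Psi)=\mathfrak q_\bS\circ\St\Psi$ on objects (legitimate since $\St\Psi$ is cocontinuous by the preceding proposition and $\mathfrak q_\bS$ is) and via the pseudofunctoriality of $\St$ on modifications. Then $\stab^\ast\circ\Theta\cong\id$: by pseudonaturality, $\Theta(\Psi)\circ\stab_\D=\mathfrak q_\bS\circ\St\Psi\circ\stab_\D\cong\mathfrak q_\bS\circ\stab_\bS\circ\Psi\cong\Psi$, naturally in $\Psi$. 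This already gives essential surjectivity of $\stab^\ast$.

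The substantive point is $\Theta\circ\stab^\ast\cong\id$. For cocontinuous $\Xi\colon\St\D\to\bS$, pseudofunctoriality gives $\Theta(\stab^\ast\Xi)=\mathfrak q_\bS\circ\St(\Xi\circ\stab_\D)\cong\mathfrak q_\bS\circ\St\Xi\circ\St\stab_\D$, while pseudonaturality of $\stab$ at $\Xi$ gives $\St\Xi\circ\stab_{\St\D}\cong\stab_\bS\circ\Xi$, whence $\mathfrak q_\bS\circ\St\Xi\circ\stab_{\St\D}\cong\Xi$. So everything reduces to the triangle identity
\[
\mathfrak q_{\St\D}\circ\St\stab_\D\cong\id_{\St\D},\qquad\text{equivalently}\qquad\St\stab_\D\cong\stab_{\St\D}
\]
as cocontinuous morphisms $\St\D\to\St\St\D$. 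I expect this to be the hard part. Since $\St\D$ is a stable (hence regular) derivator, $\stab_{\St\D}$ is an equivalence by Proposition~\ref{prop:stabonstable}; the work is to identify it with $\St\stab_\D=\loc_{\St\D}(\stab_\D)^Vi_\D$. This is not formal — neither $(\stab_\D)^Vi_\D$ nor $L_{\St\D}$ has image in $\Omega$-spectra — and I would attack it exactly as in the proof of Lemma~\ref{lemma:stableequivalence}: express both $\St\stab_\D(Y)$ and $\stab_{\St\D}(Y)$, for $Y\in\St\D$, as colimits of suitable two-variable diagrams built from the functors $\Omega^n(\sigma^\ast)^n$, use regularity of $\St\D$ to commute $\Omega$ past the sequential colimit, and invoke homotopy finality of a diagonal inclusion (as with the shapes $U$ and $\N^2$ there) to see the two colimits agree.

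Finally, with the triangle identity established, $\Theta\circ\stab^\ast\cong\id$ follows, so $\Theta$ is a quasi-inverse of $\stab^\ast$ and the displayed equivalence holds; and, tracing through, the invertible modifications constructed above are the unit ($\stab$) and counit ($\mathfrak q$) of a pseudoadjunction $\St\dashv\text{incl}$ whose right adjoint is fully faithful (being the inclusion of a full $2$-subcategory). The only remaining labor is checking the modification coherences — \ie that the isomorphisms assembled are isomodifications, not merely objectwise isomorphisms — which, as in the previous lemmas, is routine.
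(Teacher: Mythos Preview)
Your proposal is correct and takes essentially the same route as the paper: the inverse is $\Theta(\Psi)=(0,0)^\ast\St\Psi$, one composite is handled by the pseudonaturality square you assemble from Lemma~\ref{lemma:stableequivalence} together with Proposition~\ref{prop:stabonstable}, and the other composite reduces to the identification $\St\stab_\D\cong\stab_{\St\D}$ as morphisms $\St\D\to\St\St\D$.

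The only divergence is in how that last identification is carried out. You anticipate a second pass through the two-variable-diagram and homotopy-finality machinery of Lemma~\ref{lemma:stableequivalence}; the paper instead argues directly and pointwise. For $X\in\St\D$ it evaluates both $\loc^V L^V i_\D(X)$ and $L_{\St\D}(X)$ in $\Sp(\St\D)$ at each $(i,i)\in V$, and uses that $X$ is already an $\Omega$-spectrum to identify the $(i,i)$-entry of each with $(\sigma^\ast)^i X$, via the counit computation already done in the proof of Proposition~\ref{prop:stabonstable}. Both expressions therefore give the object $\overline X$ of Diagram~\ref{dia:stabstab}, which is visibly stable and hence fixed by $\loc_{\St\D}$. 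This is considerably shorter than rebuilding the $U$-shaped diagram a second time, though your proposed method would presumably also succeed. Your explicit isolation of the pseudonaturality square $\St\Phi\circ\stab_\D\cong\stab_\E\circ\Phi$ (and the observation that $\Phi$ commutes with the extension-by-zero step of $L$) is a clean way to package what the paper does implicitly inside its commutative diagrams.
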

\begin{proof}
It suffices to give an inverse equivalence $F\colon\Hom_{!}(\D,\bS)\to\Hom_!(\St\D,\bS)$. Suppose that $\Phi\colon\D\to\bS$ is a cocontinuous morphism. Then we let $F(\Phi)$ be given by the composition
\begin{equation*}
\xymatrix{
\D\ar[d]_-\Phi\ar[r]&\Sp\D\ar[d]_-{\Phi^V} \ar[r]^{\loc_\D}&\ar@{}[dl]|\netrans\ar@{}[dl]<0.4ex>_(0.55)\cong{}\St\D\ar[d]_-{\St\Phi}\ar@(r,u)[dr]^{F(\Phi)}\\
\bS\ar[r]&\Sp\bS\ar[r]_{\loc_\bS}&\St\bS\ar[r]_-{(0,0)^\ast}&\bS
}
\end{equation*}
Otherwise put, we have $F(\Phi):=(0,0)^\ast\St\Phi$. The middle square commutes up to isomorphism by Lemma~\ref{lemma:stableequivalence}. Since the upper composition is exactly $\stab_\D\colon\D\to~\St\D$, this shows that we have an isomorphism $F(\Phi)\stab_\D\cong (0,0)^\ast\stab_\bS\Phi$. But because $\bS$ is already a stable derivator, we have that $(0,0)^\ast\stab_\bS\cong \id_\bS$, so that $F(\Phi)\stab_\D\cong\Phi$.

For the other direction, suppose that we have a cocontinuous morphism\linebreak $\Psi\colon\St\D\to~\bS$. Then we would like to show that $F(\Psi\stab_\D)\cong\Psi$. These morphisms fit into the following diagram:
\begin{equation}\label{dia:StStD}
\vcenter{\xymatrix@C=4em{
\D\ar[d]_-{\stab_\D}\ar[r]^-{\stab_\D}&\ar@{}[dl]|\netrans\ar@{}[dl]<-1ex>|(0.55)\cong{}\St\D\ar[d]^-{\St\stab_\D}\ar@(r,u)[dr]&{}\\
\St\D\ar[d]_-{\Psi}\ar[r]^-{\stab_{\St\D}}&\ar@{}[dl]|\netrans\ar@{}[dl]<-1ex>|(0.55)\cong{}\St\St\D\ar[d]\ar[d]^-{\St\Psi}\ar[r]^-{(0,0)^\ast}&\St\D\ar[d]^-\Psi\ar@{}[dl]|\netrans\ar@{}[dl]<-1ex>|(0.55)\cong{}\\
\bS\ar[r]_-{\stab_\bS}&\St\bS\ar[r]_{(0,0)^\ast}&\bS
}}
\end{equation}
The rightmost composition is by definition $F(\Psi\stab_\D)$. Tracing the diagram gives us
\begin{equation*}
\Psi\stab_\D\cong F(\Psi\stab_\D)\stab_\D,
\end{equation*}
but this is not quite enough.

Instead, we would like to compare $\St\stab_\D$ and $\stab_{\St\D}$. The first map is defined by
\begin{equation*}
\xymatrix@C=3em{
\St\D\ar[r]^-{i_\D}&\Sp\D\ar[r]^-{L^V}&\Sp(\Sp\D)\ar[r]^-{\loc^V}&\Sp(\St\D)\ar[r]^-{\loc_{\St\D}}&\St(\St\D)
}
\end{equation*}
and the second defined by
\begin{equation*}
\xymatrix@C=3em{
\St\D\ar[r]^-{L_{\St\D}}&\Sp(\St\D)\ar[r]^-{\loc_{\St\D}}&\St(\St\D).
}
\end{equation*}

We would like to show that $\loc^V L^Vi_\D$ and $L_{\Sp\D}i_\D$ are isomorphic after applying $\loc_{\St\D}$. Therefore let $X\in\St\D$. Then $L^V(X)\in\Sp(\Sp\D)\subset \D^{V\times V}$, which has underlying diagram
\begin{equation*}
\vcenter{\xymatrix@C=1em@R=1em{
&{}&{}&{}&{}\\
&&0\ar[r]&LX_1\ar@.[ur]&{}\\
&0\ar[r]&LX_0\ar[r]\ar[u]&0\ar[u]\\
&LX_{-1}\ar[r]\ar[u]&0\ar[u]\\
\ar@.[ur]&
}}
\end{equation*}
where each object is the connective suspension prespectrum on $X_i=(i,i)^\ast X$. Applying $\loc^V$ to each of these gives back the original spectrum $X$ up to shifting because $X$ was originally a stable spectrum, just as in the proof of Proposition~\ref{prop:stabonstable}:
\begin{equation}\label{dia:stabstab}
\vcenter{\xymatrix@C=1em@R=1em{
&{}&{}&{}&{}\\
&&0\ar[r]&\sigma^\ast X\ar@.[ur]&{}\\
&0\ar[r]&X\ar[r]\ar[u]&0\ar[u]\\
&(\sigma\inv)^\ast X\ar[r]\ar[u]&0\ar[u]\\
\ar@.[ur]&
}}
\end{equation}
Call this object $\overline X$. Already we have that $\overline X\to \Omega\sigma^\ast \overline X$ is an isomorphism, \ie $\overline X$ is a stable spectrum in $\St\D$, so $\loc_{\St\D} \overline X\cong\overline X$.

Now, consider the same $X\in\St\D$ but now apply $L_{\St\D}$. This is a connective suspension prespectrum on $X$, which has underlying diagram
\begin{equation*}
\vcenter{\xymatrix@C=1em@R=1em{
&{}&{}&{}&{}\\
&&0\ar[r]&\Sigma X\ar@.[ur]&{}\\
&0\ar[r]&X\ar[r]\ar[u]&0\ar[u]\\
&0\ar[r]\ar[u]&0\ar[u]\\
\ar@.[ur]&
}}
\end{equation*}
To analyze $\widetilde X=\loc_{\St\D}L_{\St\D} X$, we can look at $(i,i)^\ast\widetilde X$, taking this restriction on the objectwise $V$ dimension. Namely, $(i,i)^\ast\widetilde X\in\St\D$ is the localization of
\begin{equation*}
\vcenter{\xymatrix@C=1em@R=1em{
&{}&{}&{}&{}\\
&&0\ar[r]&\Sigma X_i\ar@.[ur]&{}\\
&0\ar[r]&X_i\ar[r]\ar[u]&0\ar[u]\\
&0\ar[r]\ar[u]&0\ar[u]\\
\ar@.[ur]&
}}
\end{equation*}
Again, because $X$ was originally a stable spectrum, this is just $\loc_\D LX_i$, which is isomorphic to $(\sigma^\ast)^iX$. Therefore $\widetilde X$ is isomorphic to $\overline X$.

But now we use the fact that $(0,0)^\ast$ is a quasiinverse for $\stab_{\St\D}$, \ie the middle composition of Diagram~\ref{dia:StStD}. This makes $(0,0)^\ast$ a quasiinverse for $\St\stab_\D$ as well, so that the top-right triangular horizontal composition is isomorphic to the identity:
\begin{equation*}
\vcenter{\xymatrix@C=4em{
\St\D\ar[d]_-{\St\stab_\D}\ar@(r,u)[dr]^-=&\ar@{}[dl]|\netrans\ar@{}[dl]<-1ex>|(0.55)\cong{}\\
\St\St\D\ar[d]\ar[d]_-{\St\Psi}\ar[r]^-{(0,0)^\ast}&\St\D\ar[d]^-\Psi\ar@{}[dl]|\netrans\ar@{}[dl]<-1ex>|(0.55)\cong{}\\
\St\bS\ar[r]_{(0,0)^\ast}&\bS
}}
\end{equation*}
This gives the required isomorphism $F(\Psi\stab_\D)=(0,0)^\ast\St\Psi\St\stab_\D\to \Psi$ and completes the proof of \cite[Theorem~8.1]{Hel97}.
\end{proof}

\bibliography{Bibliography}
\bibliographystyle{alpha}

\end{document}